\tikzstyle{vertex}=[circle,draw=black,fill=black,inner sep=0,minimum size=5pt,text=white,font=\footnotesize]
\declaretheorem[name=Theorem,numberwithin=section]{theorem}
\newtheorem{lemma}[theorem]{\bf Lemma}
\newtheorem{proposition}[theorem]{\bf Proposition}
\newtheorem*{theorem*}{\bf Theorem}
\theoremstyle{definition}
\newtheorem{definition}[theorem]{\bf Definition}
\def\eps{\varepsilon}
\def\cF{\mathcal{F}}
\def\cH{\mathcal{H}}
\def\cN{\mathcal{N}}
\def\cP{\mathcal{P}}
\def\cQ{\mathcal{Q}}
\def\cS{\mathcal{S}}
\def\cV{\mathcal{V}}
\def\bF{\mathbb{F}}
\def\oF{\overline{\mathbb{F}}}
\def\bE{\mathbb{E}}
\def\bR{\mathbb{R}}
\def\bP{\mathbb{P}}
\def\Pb{\mathbb{P}}
\DeclareMathOperator{\aff}{Aff}
\DeclareMathOperator{\Var}{Var}
\DeclareMathOperator{\Cov}{Cov}
\DeclareMathOperator{\diag}{diag}
\DeclareMathOperator{\rs}{rs}
\title{\vspace{-0.9cm} Incidence bounds via extremal graph theory}
\author{Aleksa Milojevi\'c\thanks{ETH Zurich, e-mail: \textbf{\{aleksa.milojevic, benjamin.sudakov\}@math.ethz.ch}. Research supported in part by SNSF grant 200021\_196965.}, Benny Sudakov\footnotemark[1], Istv\'an Tomon\thanks{Ume\r{a} University, \emph{e-mail}: \textbf{istvan.tomon@umu.se}.}}
\date{}
\begin{document}

\maketitle
\begin{abstract}
The study of counting point-hyperplane incidences in the $d$-dimensional space was initiated in the 1990's by Chazelle and became one of the central problems in discrete geometry. It has interesting connections to many other topics, such as additive combinatorics and theoretical computer science. Assuming a standard non-degeneracy condition, i.e., that no $s$ points are contained in the intersection of $s$ hyperplanes, the currently best known upper bound on the number of incidences of $m$ points and $n$ hyperplanes in $\mathbb{R}^d$ is $$O_{d, s}((mn)^{1-1/(d+1)}+m+n).$$ This bound by Apfelbaum and Sharir is based on geometrical space partitioning techniques, which apply only over the real numbers.

In this paper, we propose a novel combinatorial approach to study such incidence problems over arbitrary fields. Perhaps surprisingly, this approach matches the best known bounds for point-hyperplane incidences in $\mathbb{R}^d$ for many interesting values of $m, n, d$, e.g. when $m=n$ and $d$ is odd. Moreover, in finite fields our bounds are sharp as a function of $m$ and $n$ in every dimension. We also study the size of the largest complete bipartite graph in point-hyperplane incidence graphs with a given number of edges and obtain optimal bounds as well.

Additionally, we study point-variety incidences and unit-distance problem in finite fields, and give tight bounds for both problems under a similar non-degeneracy assumption. We also resolve Zarankiewicz type problems for algebraic graphs. Our proofs use tools such as induced Tur\'an problems, VC-dimension theory, evasive sets and Hilbert polynomials. Also, we extend the celebrated result of R\'onyai, Babai and Ganapathy on the number of zero-patterns of polynomials to the context of varieties, which might be of independent interest.
\end{abstract}

\section{Introduction}

The Szemer\'edi-Trotter theorem \cite{SzT} is a fundamental result in combinatorial geometry, giving a sharp upper bound on the number of incidences in point-line configurations. It states that $m$ points and $n$ lines on the real plane determine at most $O((mn)^{2/3}+m+n)$ incidences, and as Erd\H{o}s showed (see e.g. \cite{E87}), this bound is the best possible. This deep result found numerous applications and inspired a large number of generalizations and extensions over the last decades. For example, the problem of counting point-line incidences in $\bR^3$ under certain non-degeneracy conditions has been studied by Guth and Katz \cite{GK} in their solution to the Erd\H{o}s distinct distances problem. The Szemer\'edi-Trotter theorem has also been applied by Elekes \cite{Elekes} to derive sum-product estimates over the reals. For these and other applications of incidence theorems, we refer the reader to the survey \cite{Dvir12}.

Extending incidence results to finite fields is more tricky, since standard space partitioning techniques do not apply anymore. The importance of point-line incidence bounds in $\bF_p^2$ stems from their close connection to sum-product estimates, as shown in the pioneering work of Bourgain, Katz and Tao \cite{BKT}. Following this work, point-line incidence bounds over finite fields have been extensively studied, see e.g. \cite{Kollar, MS21, SdZ17}.

In this paper, we consider several natural incidence questions in higher dimensions. The first question we address is to find the maximum number of incidences between $m$ points and $n$ hyperplanes in $\mathbb{F}^d$, for an arbitrary field $\bF$. In dimension 3 and above, it is possible that all $n$ hyperplanes contain all $m$ points: take $m$ points on a line, and $n$ hyperplanes containing this line. In order to avoid such trivialities, it is standard to impose the further condition that the \emph{incidence graph} is $K_{s,s}$-free, where $K_{s, s}$ denotes the complete bipartite graph with vertex classes of size $s$. Here, we think of $s$ as a large constant, which may depend on $d$, but no other parameters. In geometric terms, this condition says that there do not exist $s$ hyperplanes whose intersection contains more than $s$ points.

Given a set of points $\cP$ and set of geometric objects (e.g. hyperplanes) $\cH$, the \emph{incidence graph} of $(\cP,\cH)$ is the bipartite graph $G(\cP,\cH)$ with vertex classes $\cP$ and $\cH$, and $\{x,H\}\subset \cP\cup \mathcal{H}$ is an edge if $x\in H$.  We denote by $I(\cP,\cH)$ the number of edges of the incidence graph, i.e. $I(\cP,\cH)$ is the number of incidences between $\cP$ and $\cH$. 

Beyond point-hyperplane incidences, we obtain sharp bounds for several other well-studied incidence problems. We find the maximum number of incidences between points and $d$-dimensional varieties of bounded degree over arbitrary fields assuming the incidence graph is $K_{s,s}$-free. Then, we determine the maximum number of unit distances between $n$ points in $\bF^d$, assuming the unit distance graph is $K_{s, s}$-free. Finally, we conclude with Zarankiewicz's problem for so called algebraic graphs.

\subsection{Point-hyperplane incidences}

The problem of bounding the number of edges in $K_{s,s}$-free point-hyperplane incidence graphs has a long history, with the initial motivation coming from computational geometry. How compactly can one record the incidences in a configuration of points and hyperplanes? Instead of writing down whether each of the $mn$ pairs $(x, H)\in \cP\times \cH$ forms an incidence, one can compress the incidence graph by representing it as the union of complete bipartite subgraphs. This cannot be done very efficiently in general, for example  if $G=G_{n,1/2}$ is the Erd\H{o}s-R\'enyi random graph, one needs a list of $\Omega(\frac{n^2}{\log n})$ vertices to encode any partition. However, for incidence graphs Chazelle \cite{Ch93} developed a space partitioning technique to show that this can be improved to $O_{d, s}\big(n^{2-\frac{2}{d+1}} \log n\big)$ when $m=n$. Another motivation comes from the counting version of Hopcroft's problem, which asks, given $m$ points and $n$ hyperplanes in $\bR^d$, how fast can one determine the number of incidences. The work of Chazelle \cite{Ch93} achieved the first subquadratic algorithm for this problem and Erickson \cite{Erickson} gave a lower bound for the running time of algorithms classified as partitioning algorithms. He showed that particularly hard instances for determining the number of incidences are configurations $(\cP, \cH)$ without a compact representation, i.e. a configuration without a large complete bipartite graph in the incidence graph.

Improving the works of Chazelle \cite{Ch93} and  Brass and Knauer \cite{BK03}, Apfelbaum and Sharir \cite{AS07} proved that if $\cP$ is a set of $m$ points and $\cH$ is a set of $n$ hyperplanes in $\mathbb{R}^d$, whose incidence graph contains no $K_{s, s}$, then 
\begin{equation}\label{equ:1}
    I(\cP, \cH)\leq O_{d,s}\left((mn)^{1-\frac{1}{d+1}}+m+n\right).
\end{equation}
This upper bound has not been improved in the past twenty years, however, matching lower bounds are only known for $d=2$, which coincides with the Szemer\'edi-Trotter theorem. The common feature in all of the proofs of (\ref{equ:1}) and related results is that they rely on certain space partitioning results such as cuttings \cite{Ch93, Clarkson} or polynomial partitioning \cite{GK, SolymosiTao}. These techniques are highly geometric and have no analogues over finite fields.

The currently best known lower bound for $d\geq 3$ was recently achieved by Sudakov and Tomon \cite{ST23}, improving on the constructions of Brass and Knauer \cite{BK03} and Balko, Cibulka, and Valtr \cite{BCV}: if $s$ is sufficiently large with respect to $d$, then there exist a set of $m$ points $\cP$ and a set of $n$ hyperplanes $\cH$ in $\bR^d$ whose incidence graph is $K_{s,s}$-free and 
\[I(\cP, \cH)\geq \begin{cases}
    \Omega_d\left((mn)^{1-(2d+3)/{(d+2)(d+3)}}\right) &\text{  if $d$ is odd,}\\
    \Omega_d\left((mn)^{1-(2d^2+d-2)/{(d+2)(d^2+2d-2)}}\right) &\text{  if $d$ is even.}
\end{cases}\]

A natural variant of this problem asks about the maximum number of incidences between points and hyperplanes in a $d$-dimensional space $\mathbb{F}^d$ for a general field $\mathbb{F}$. If $d=2$ and $p$ is a prime, taking every point of $\mathbb{F}_p^2$ and $n<p^2$ arbitrary lines, we get $m=p^2$ points, and $np=\Theta(nm^{1/2})$ incidences, beating the Szemer\'edi-Trotter bound as long as $n\gg p$. On the other hand, a simple application of the K\H{o}v\'ari-S\'os-Tur\'an theorem \cite{KST} shows that one cannot have more than $O(nm^{1/2}+m)$ incidences over any field, by noting that the incidence graph of points and lines is always $K_{2,2}$-free. The work of Bourgain, Katz, and Tao \cite{BKT} shows that better upper bounds can be obtained over $\mathbb{F}_p$ assuming $m,n\leq p^{2-\delta}$.

However, point-hyperplane incidence bounds in higher dimensions turn out to be more elusive. One such result in three dimensions was obtained by Rudnev \cite{Rudnev} (see also de Zeeuw \cite{Z} for a shorter proof). He proved that in $\mathbb{F}_p^3$, if no $s$ planes contain the same line, then there are at most $O(m\sqrt{n}+ms)$ incidences, assuming $n\leq p^2$. This point-plane incidence bound in $\bF_p^3$ was later used to show various improved sum-product estimates over finite fields in \cite{AMRS,MS21,RRS16}. For $d>3$, barely anything is known about incidences of points and hyperplanes in $\bF_p^d$.

In this paper, we propose a novel approach to derive bounds on the maximal number of incidences between $m$ points and $n$ hyperplanes in a $d$-dimensional vector space $\mathbb{F}^d$, assuming the incidence graph is $K_{s,s}$-free. We combine new graph-theoretic techniques with probabilistic arguments and prove bounds which are sharp for the whole range of parameters $m$ and $n$ with a suitable choice of field. Surprisingly, for many interesting pairs of values $(m,n)$, our upper bound matches the upper bound (\ref{equ:1}). This is fairly unexpected, as all proofs of (\ref{equ:1}) rely on highly geometric techniques, while we employ only combinatorial ideas. To give a snippet of our most general result, we prove the following in the special case $m=n$.

\begin{theorem}\label{thm:upper bounds K_s,s symmetric}
Let $d, s$ be positive integers and let $\bF$ be a field. If $\cP$ is a set of $n$ points and $\cH$ is a set of $n$ hyperplanes in $\bF^d$ such that no $s$ points lie on the intersection of $s$ hyperplanes, then $$I(\cP, \cH)\leq O\left((s+d^3)n^{2-1/\lceil \frac{d+1}{2}\rceil}\right).$$
\end{theorem}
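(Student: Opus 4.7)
Set $t := \lceil (d+1)/2 \rceil$. The plan is to combine an algebraic property of incidence graphs over $\bF^d$ with a purely graph-theoretic Zarankiewicz-type bound for bipartite graphs forbidding a certain induced subgraph, in keeping with the paper's stated combinatorial approach via forbidden induced patterns.

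First, I would identify a bipartite ``obstruction'' graph $F$, on a number of vertices depending only on $d$, that cannot occur as an induced subgraph of $G(\cP,\cH)$ for any configuration in $\bF^d$. Passing to homogeneous coordinates in $\bF^{d+1}$, incidence becomes orthogonality, so the bilinear form matrix $\bigl(\langle \tilde p_i, \tilde h_j \rangle\bigr)$ has rank at most $d+1$. A natural candidate for $F$ is a subgraph of $K_{d+2,d+2}$ minus a perfect matching: an induced realization of such a pattern would force the bilinear form matrix to be diagonal with nonzero entries and hence of rank $d+2$, contradicting the dimension constraint. One can further reinforce $F$ to handle degenerate configurations or small characteristic by bootstrapping from the $K_{s,s}$-freeness assumption.

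Second, I would prove the following extremal graph-theoretic estimate: any $K_{s,s}$-free bipartite graph on $n+n$ vertices that is $F$-induced-free has at most $O\bigl((s+d^3)\, n^{2-1/t}\bigr)$ edges. My approach would be via dependent random choice, combined with an iterative refinement: sampling a small random subset on one side forces, with positive probability, a large collection of vertices on the other side with substantial pairwise common neighborhoods. The $K_{s,s}$-freeness controls the size of these common neighborhoods, while the absence of the induced pattern $F$ prevents the local structure from looking ``almost complete,'' together pushing the overall edge density below $n^{2-1/t}$. Applying this bound to $G(\cP,\cH)$ and combining with Step 1 yields the theorem.

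The main obstacle is the extremal graph-theoretic step. Standard dependent random choice applied to a $K_{s,s}$-free graph only yields the Kővári–Sós–Turán exponent $2-1/s$, which is far weaker than $2-1/t$ whenever $s \gg t$. Beating this bound requires exploiting the induced pattern $F$ precisely at each sampling step — likely through an inductive scheme that peels off vertices of large degree and extracts increasingly structured substructures until one either locates a $K_{s,s}$, locates $F$, or certifies low density via a $K_{t,t}$-like Kővári–Sós–Turán argument. Choosing $F$ so that it is simultaneously algebraically forbidden (for Step 1) and combinatorially productive (for Step 2), while keeping the constant at $s + d^3$, is the subtle point; the cubic dependence on $d$ suggests the iterative argument loses at most a polynomial factor in $|V(F)|$ at each level.
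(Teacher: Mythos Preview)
Your two-step outline --- identify an induced bipartite pattern forbidden in every point--hyperplane incidence graph over $\bF^d$, then prove a Zarankiewicz-type bound for $K_{s,s}$-free graphs avoiding that induced pattern --- is exactly the paper's strategy, and your diagnosis that the extremal step is the crux is correct. The gap is that you have neither the right obstruction nor a workable plan for the extremal step, and the two issues are linked. The paper's forbidden pattern $\Pi_d$ is not $K_{d+2,d+2}$ minus a perfect matching; it lives on $d+d$ vertices, with the pairs $a_ib_j$ for $i\ge j-1$ forced to be edges, the pairs $a_ib_{i+2}$ for $1\le i\le d-2$ forced to be non-edges, and all other pairs unconstrained. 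So there are only $d-2$ forced non-edges, arranged in a \emph{staircase} (upper-triangular) shape rather than along a diagonal; the paper excludes $\Pi_d$ by a direct dimension count on $\bigcap_{i\le k} H_i$, not via a rank argument.

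The staircase shape is precisely what drives the extremal step, and this is the idea you are missing. The $d-2$ non-edges of $\Pi_d$ split into two blocks: the first $t-2$ are realised by a ``good'' $t$-tuple $v_1,\ldots,v_t\in A$ whose common neighbourhoods form a strictly descending chain, and the remaining $d-t$ are realised by a second-round selection of $u_{t-1},\ldots,u_d$ inside $X=N(v_1,\ldots,v_t)$ with its own strict chain on the $A$-side. A dependent-random-choice sampling of good $t$-tuples then balances $\bE\,|X|$ against the expected number of $(d+1-t)$-tuples in $X$ whose common neighbourhood has fewer than $s$ vertices; when $m=n$ the two quantities equalize exactly at $e(G)\approx (s+d^{O(1)})\,n^{2-1/t}$, which is how the exponent $t=\lceil (d+1)/2\rceil$ and the constant $s+d^3$ arise. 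Your diagonal obstruction admits no such two-sided split --- each of its $d+2$ non-edges consumes a fresh vertex on both sides simultaneously --- so there is no analogous balanced two-stage embedding, and the ``inductive peeling'' you sketch does not supply one. The triangular structure of $\Pi_d$, enabling a $t$ versus $d+1-t$ decomposition of the embedding task, is the missing ingredient.
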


\noindent
Note that this bound matches the best known bound (\ref{equ:1}) when $d$ is odd. Before stating the general bound, which depends on the relationship of $m$ and $n$, let us introduce a piece of notation. Namely, we define the quantities $\alpha_t=\frac{t}{d+2-t}$ for $t\in \{2, \dots, d\}$ and $\beta_t=\frac{t}{d+1-t}$ for $t\in \{1, \dots, d\}$. 
Now we can state our main theorem in its full generality.

\begin{theorem}\label{thm:main thm K_s,s}
Let $d, s,m,n$ be positive integers and $\alpha>0$ such that $n=m^{\alpha}$, and let $\bF$ be a field. Let $\cP$ be a set of $m$ points and $\cH$ be a set of $n$ hyperplanes in $\bF^d$ such that the incidence graph of $(\cP, \cH)$ is $K_{s, s}$-free. Then
$$I(\cP, \cH)\leq \begin{cases} O_{s,d}(m)  &\mbox{ if } \alpha\in (0,\beta_1],\\
O_{s, d}(m^{1-\frac{1}{d+2-t}}n)&\mbox{ if }\alpha\in [\beta_{t-1},\alpha_t]\mbox{ for some }t\in \{2,\dots,d\},\\
O_{s, d}(mn^{1-\frac{1}{t}}) &\mbox{ if }\alpha\in [\alpha_t,\beta_t]\mbox{ for some }t\in \{2,\dots,d\},\\
O_{s,d}(n)  &\mbox{ if } \alpha\in [\beta_d,\infty).\end{cases}$$

Moreover, these bounds are tight, i.e. for every $d$, there exist $s=s(d)$ and $c=c(d)$ such that the following hold. For every $\alpha>0$ and sufficiently large integer $m$, there exist a field $\bF=\bF(d,m,\alpha)$,  a set of $m$ points $\cP$ and a set  of $n=\lfloor m^{\alpha}\rfloor$ hyperplanes $\cH$ in $\bF^d$ such that the incidence graph of $(\cP, \cH)$ is $K_{s, s}$-free and  
$$I(\cP, \cH)\geq \begin{cases} m  &\mbox{ if } \alpha\in (0,\beta_1],\\
cm^{1-\frac{1}{d+2-t}}n&\mbox{ if }\alpha\in [\beta_{t-1},\alpha_t]\mbox{ for some }t\in \{2,\dots,d\},\\
cmn^{1-\frac{1}{t}} &\mbox{ if }\alpha\in [\alpha_t,\beta_t]\mbox{ for some }t\in \{2,\dots,d\},\\
n&\mbox{ if } \alpha\in [\beta_d,\infty).\end{cases}$$
\end{theorem}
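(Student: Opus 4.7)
The plan is to prove Theorem~\ref{thm:main thm K_s,s} in two parts: the upper bound via a forbidden-subgraph approach, and the matching lower bound via explicit algebraic constructions.

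For the upper bound, the strategy is to translate the $K_{s,s}$-free assumption on the incidence graph, together with the algebraic structure of affine hyperplanes in $\bF^d$, into a hierarchy of stronger bipartite Tur\'an-type restrictions: for each $t \in \{2,\dots,d\}$ one should show that the incidence graph contains no ``generic'' $K_{t,c}$, with $c=c(d,s)$ a constant, where ``generic'' means that the $c$ points on the point side are not already concentrated on an affine subspace of dimension less than $t$. The underlying geometric mechanism is that $t$ hyperplanes intersect in an affine subspace of codimension at least $t$, and the set of hyperplanes containing such a subspace forms a low-dimensional pencil; via pigeonhole, many large common neighbourhoods of $t$-tuples concentrate inside the same pencil, which---if too large---contains $\geq s$ of the $m$ hyperplanes, and then $K_{s,s}$-freeness bounds $c$. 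Given these restrictions, I would apply the K\H{o}v\'ari-S\'os-Tur\'an theorem in each regime: $K_{t,c}$-freeness yields $I \leq O_{s,d}(m^{1-1/t} n)$ in the range $\alpha \in [\alpha_t, \beta_t]$, while the complementary range $\alpha \in [\beta_{t-1}, \alpha_t]$ comes from the dual bound $O_{s,d}(m n^{1-1/(d+2-t)})$ (obtained by swapping points and hyperplanes via projective duality in $\bF^d$). The extreme regimes $\alpha \in [0,\beta_1]$ and $\alpha \in [\beta_d,\infty)$ are handled by trivial degree arguments.

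For the matching lower bound, the plan is to construct, for each $\alpha > 0$, an explicit $K_{s,s}$-free configuration in $\bF_p^d$ (for a suitably chosen prime $p = p(n,d,\alpha)$) achieving the stated incidence count. The building blocks would be: (i) algebraic constructions modelled on KST-sharp bipartite graphs such as norm or projective norm graphs, realised by taking points on a variety and hyperplanes as affine level sets, to cover the ``balanced'' regimes $\alpha_t \leq \alpha \leq \beta_t$; (ii) Cartesian product constructions that glue two lower-dimensional configurations along complementary coordinate directions, tuning the factor sizes to hit arbitrary $\alpha$ in the intermediate ranges $[\beta_{t-1},\alpha_t]$; and (iii) trivial ``one point per hyperplane'' constructions for the extreme regimes $\alpha \in [0,\beta_1]$ and $[\beta_d,\infty)$. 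The $K_{s,s}$-freeness of the products reduces to a short algebraic check on the base configurations.

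The main obstacle will be the combinatorial-geometric lemma establishing the ``effective $K_{t,c}$-freeness'' of the incidence graph for small $t < s$. For $t \geq s$ the statement is essentially immediate from the hypothesis, but for $t < s$---the relevant case for the best bounds, in particular when $m=n$---one must argue, purely combinatorially, that $t$ hyperplanes cannot share too many common points without forcing many other hyperplanes through their intersection and producing a $K_{s,s}$. Getting the correct quantitative dependence of $c$ on $s$ and $d$, which governs the explicit $(s+d^3)$ factor in Theorem~\ref{thm:upper bounds K_s,s symmetric}, is expected to require a delicate iterative dimension-counting argument in the hyperplane arrangement, and will be by far the most subtle step of the proof.
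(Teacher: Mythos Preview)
Your upper-bound strategy has a genuine gap. The incidence graph of points and hyperplanes in $\bF^d$ is \emph{not} $K_{t,c}$-free for any constant $c$ when $t<s$: take $c$ points on a line and $t$ hyperplanes through that line. Your fix---restricting to ``generic'' $K_{t,c}$'s---does not rescue the K\H{o}v\'ari--S\'os--Tur\'an argument, since KST counts \emph{all} $t$-tuples with large common neighbourhood, and in these configurations the degenerate $t$-tuples can dominate the count. The pigeonhole/pencil mechanism you sketch (``many large common neighbourhoods concentrate in the same pencil'') does not hold either: distinct $t$-tuples of hyperplanes generically determine distinct $(d-t)$-flats, and there is no bounded family of pencils absorbing them. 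More basically, the assertion ``$t$ hyperplanes intersect in codimension at least $t$'' is false without a genericity hypothesis on the hyperplanes themselves, and it is exactly this failure that makes the problem delicate.

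The paper takes a different route. The key geometric input is not a forbidden \emph{subgraph} but a forbidden \emph{induced pattern} $\Pi_d$: a $d\times d$ bipartite template in which certain pairs are forced to be edges, certain pairs are forced to be non-edges, and the rest are unconstrained. That no point--hyperplane incidence graph contains $\Pi_d$ is a short dimension-count (Lemma~\ref{lemma:forbiddengraphs}). The extremal bound is then obtained not via KST but via dependent random choice (Lemmas~\ref{lemma:auxiliary upper bounds} and~\ref{lemma:main upper bounds}): one samples a random \emph{good} $t$-tuple on one side---good meaning the chain of common neighbourhoods $N(v_1,\dots,v_i)$ is strictly decreasing---tracks both the size of its common neighbourhood $X$ and the number of $(d+1-t)$-tuples in $X$ with small common neighbourhood, and shows that if $e(G)$ is too large one can extract a good $t$-tuple together with a good $(d+1-t)$-tuple inside $X$; these jointly exhibit $\Pi_d$. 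The $m\leftrightarrow n$ duality you invoke for the range $[\beta_{t-1},\alpha_t]$ is indeed used, but only after this purely graph-theoretic lemma is established; no projective duality over $\bF$ is needed.

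For the lower bound, the paper does not use norm-type graphs but rather \emph{subspace evasive sets}: one takes $\cP$ to be $(d-t,s)$-evasive and the normal vectors of $\cH$ to be $(t-1,s)$-evasive in $\bF_p^d$, which forces $K_{s,s}$-freeness directly, and then randomly subsamples to hit an arbitrary target $(m,n)$. Your product idea is in the right spirit, but the base objects you propose would need to be replaced.
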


Let us compare our result with (\ref{equ:1}) in the non-trivial regime $\alpha\in [1/d, d]$. Interestingly, in case $\alpha=\beta_t=\frac{t}{d+1-t}$ for some integer $t\in \{1, \dots, d\}$, Theorem \ref{thm:main thm K_s,s} shows that the number of incidences is $O_{s,d}((mn)^{1-1/(d+1)})$, which exactly matches (\ref{equ:1}). The other extreme is when $\alpha=\alpha_t=\frac{t}{d+2-t}$ for some $t\in \{2, \dots, d\}$, in which case we get $O_{s,d}((mn)^{1-1/(d+2)})$. For other values of $\alpha$, our upper bound is $O_{s,d}((mn)^{1-\gamma})$ for some $\frac{1}{d+2}\leq \gamma\leq \frac{1}{d+1}$. 

The second half of the theorem shows that these upper bounds cannot be improved unless further assumptions on the field $\bF$ are made. To show this, we consider optimal constructions of so called subspace evasive sets \cite{DL12,ST23}. We follow the ideas of \cite{BCV,BK03,ST23} to transform large subspace evasive sets into point-hyperplane configuration with many incidences and no large bipartite subgraphs.

\bigskip

We also study another well-known problem about finding large complete bipartite graphs in dense incidence graphs. The motivation behind studying this problem is the intuitive understanding that a large number of incidences between points and hyperplanes is always explained by the existence of large complete bipartite subgraphs in the incidence graph. To make things more precise, given a set of points $\cP$ and set of hyperplanes $\cH$, we define $\rs(\cP,\cH)$ as the maximum number of edges in a complete bipartite subgraph of the incidence graph of $(\cP,\cH)$, i.e. the maximum of $r\cdot s$ over all $r$ and $s$ such that $K_{r,s}$ is a subgraph of the incidence graph. We consider configurations of $m$ points and $n$ hyperplanes with $\eps mn$ incidences.

In the real $d$-dimensional space, Apfelbaum and Sharir \cite{AS07} proved that $\rs(\cP,\cH)=\Omega_d(\eps^{d-1}mn)$ for  $\eps>\Omega(n^{-1/(d-1)})$. Also, if $\eps>\Omega((mn)^{-\frac{1}{d-1}})$, then there exist a set of points $\cP$ and set of hyperplanes $\cH$ in $\bR^d$ with $\rs(\cP, \cH)\leq O_d(\eps^{\frac{d+1}{2}}mn)$. Note that in case $d=3$, the lower and upper bounds match for $\eps>\Omega(n^{-1/2})$. Do \cite{Do20} improved the lower bound in $\bR^4$ and $\bR^5$ for a large range of $m, n, \eps$ to match the upper bound up to logarithmic factors. One might be also interested how the minimum of $\rs(\cP,\cH)$ depends on the dimension $d$ as well, which is a question motivated by certain variants of the celebrated log-rank conjecture of Lov\'asz and Saks \cite{logrank}. In this regime, it follows from  Fox, Pach, and Suk \cite{FPS16} that $\rs(\cP,\cH)>\eps^{d+1}2^{-O(d\log d)}mn$. For constant $\eps$, this is improved by a  recent result of  Singer and Sudan \cite{SS22}, who obtained an exponential dependence on $d$ in the lower bound $\rs(\cP,\cH)=\Omega(\eps^{2d}mn/d)$. We refer the interested reader to \cite{SS22}  about the relationship of this problem and the log-rank conjecture.

Here, we extend and improve some of these results by considering arbitrary fields, and obtain tight bounds in all cases. Following Apfelbaum and Sharir \cite{AS07}, we  define $\rs_d(n, m, I)=\min \rs(\cP, \cH)$, where the minimum is taken over all fields $\bF$, all sets  $\cP\subset \bF^d$ of at most $m$ points, all sets $\cH$ of at most $n$ hyperplanes in $\bF^d$, that satisfy $I(\cP, \cH)\geq I$. Note that requiring that $\cP$ is a set of at most $m$ points instead of exactly $m$ points makes no crucial difference, since one can always add points to the set $\cP$ which are incident to no hyperplanes (if $\bF$ is a finite field, one might need to pass to an extension of $\bF$). A similar remark holds for hyperplanes. 

\begin{theorem}\label{thm:main thm rs}
Let $d$ and $m\leq n$ be positive integers, $\eps\in (0,1)$ and $I=\eps mn$. Then
$$\rs_d(m,n,I), \rs_d(n,m,I)=\begin{cases}
\Theta_d(\eps^{d-1}mn) &\mbox{ if } \eps>100\max\{m^{-\frac{1}{d-1}}, n^{-\frac{1}{d}}\},\\
\Theta_d(\eps n) &\mbox{ if } \eps< \frac{1}{4}\max\{m^{-\frac{1}{d-1}}, n^{-\frac{1}{d}}\}.
\end{cases}$$
\end{theorem}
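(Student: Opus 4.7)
The theorem claims $\rs_d(m, n, \eps mn) = \Theta_d(\eps^{d-1} mn)$ in the dense regime and $\Theta_d(\eps m)$ in the sparse regime (with the same statement for $\rs_d(n,m,\eps mn)$). Each regime requires matching upper and lower bounds on this minimum-over-configurations quantity. The sparse-regime lower bound $\rs_d \geq \eps m$ is immediate by pigeonhole: with $\eps mn$ incidences spread over $n$ points, some point lies on $\geq \eps m$ hyperplanes, giving $K_{1, \eps m}$. For the matching upper bound constructions in both regimes, I would work in finite affine spaces $\bF_q^d$ for an appropriate choice of $q$. In the sparse regime, take $\cP$ and $\cH$ in ``generic'' relative position (for instance via quasi-random subsets of $\bF_q^d$) so that no positive-dimensional flat contains too many points of $\cP$ lying on too many hyperplanes of $\cH$. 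In the dense regime, concentrate $\cP$ inside a low-dimensional affine subspace and let most of $\cH$ contain this subspace, tuning the shared flat's dimension and cardinality so the density matches $\eps$ and the largest biclique, which necessarily corresponds to a common flat, has size $\Theta(\eps^{d-1} mn)$.

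The main technical work is the dense-regime lower bound $\rs(\cP, \cH) \geq c_d \eps^{d-1} mn$ whenever $\eps \geq 100 \max\{m^{-1/d}, n^{-1/(d-1)}\}$. My plan combines dependent random choice with the algebra of hyperplane pencils. Step 1: iteratively delete low-degree vertices to guarantee point-minimum-degree $\geq \eps m/4$ and hyperplane-minimum-degree $\geq \eps n/4$ while retaining $\geq \eps mn/2$ edges. Step 2: apply Jensen's inequality to $\sum_{p \in \cP'} \binom{\deg(p)}{d-1}$, which is $\geq n \cdot \Omega((\eps m)^{d-1}/(d-1)!)$; dividing by $\binom{m}{d-1}$ produces $d-1$ hyperplanes $H_1, \ldots, H_{d-1}$ whose common point-neighborhood has size $r \geq c \eps^{d-1} n$. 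These $r$ points all lie on the affine flat $F = H_1 \cap \cdots \cap H_{d-1}$, whose dimension is $\geq 1$. Step 3: double-count incidences between $\cP \cap F$ and $\cH$. A hyperplane in $\cH$ either contains $F$, contributing $r$ incidences, or meets $F$ in a proper subflat; in the extreme case when $F$ is a line, the latter hyperplanes contribute at most one incidence each. Since the minimum-degree property from Step~1 forces $\geq r \cdot \eps m / 4$ incidences originating from $\cP \cap F$, balancing the count yields $s = \Omega(\eps m)$ hyperplanes containing $F$, producing a $K_{r, s}$ subgraph.

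The main obstacle is that this direct pipeline outputs a biclique of size $\Omega(\eps^d mn)$, one factor of $\eps$ short of the target $\eps^{d-1} mn$. To recover the missing factor, I see two promising routes. Route (a): sharpen Step 2 by a second-moment argument. Because $\eps \geq 100 m^{-1/d}$, the contribution of high-degree points to $\sum_p \binom{\deg p}{d-1}$ significantly exceeds the Jensen lower bound, and a more refined averaging should produce a $(d-1)$-subset of $\cH$ with common neighborhood of size $\Omega(\eps^{d-2} n)$ rather than $\Omega(\eps^{d-1} n)$, absorbing the missing $\eps$ into the threshold condition. Route (b): induction on $d$. Locate a point $p^*$ of degree $\geq \eps m$, restrict attention to the $\eps m$ hyperplanes through $p^*$, intersect each with a generic auxiliary hyperplane, and apply the inductive bound $\Omega(\eps^{d-2})$ to the resulting incidence problem in dimension $d-1$; composing with the $\eps m$ degree of $p^*$ reconstructs the $\eps^{d-1} mn$ bound. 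Either route requires careful tracking of how the threshold condition $\eps > 100 \max\{m^{-1/d}, n^{-1/(d-1)}\}$ transforms under the parameter changes, and I expect this bookkeeping to be the most delicate part of the argument.
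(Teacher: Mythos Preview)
Your sparse-regime lower bound and the general shape of Steps~1--3 are fine, but the argument stalls exactly where you say it does, and neither of your proposed fixes closes the gap. Route~(a) cannot work: after the minimum-degree cleanup the degrees are essentially uniform, so Jensen is tight and no moment argument will produce a $(d-1)$-tuple of hyperplanes with common neighbourhood $\Omega(\eps^{d-2}n)$; in a random bipartite graph of density $\eps$ no such tuple exists. Route~(b) as stated is unclear: projecting through $p^*$ may collapse many points onto common lines, so the point count $n'$ in the $(d-1)$-dimensional problem is uncontrolled, and you do not explain how ``composing with the $\eps m$ degree of $p^*$'' recovers the right exponent. There is a further gap already in Step~3: the dichotomy ``contains $F$ or meets it in at most one point'' requires $\dim F=1$, but Jensen gives no control over the linear independence of the normals of $H_1,\dots,H_{d-1}$, so $F$ may have higher dimension, and then a hyperplane not containing $F$ can still contain arbitrarily many of the $r$ points.

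The paper's route is different. The geometry is distilled into a single combinatorial fact (Lemma~\ref{lemma:forbiddengraphs}): the incidence graph avoids an induced pattern $\Pi_d$. One then works with \emph{good $(d-1)$-tuples} of points---tuples $v_1,\dots,v_{d-1}$ with a strictly decreasing chain of common neighbourhoods---and selects one whose common neighbourhood $X\subseteq B$ is \emph{maximal}. Avoidance of $\Pi_d$ yields your dichotomy in the sharp form (each $v\in A$ is complete to $X$ or has $\le 1$ neighbour in $X$), giving $|V_1|\gtrsim \eps n$ for the set $V_1$ of vertices complete to $X$. The key idea you are missing is that by the maximality of $X$, the restricted graph $G'=G[V_1,\, B\setminus X]$ contains no good $(d-1)$-tuple at all; hence the inductive hypothesis for $d-1$ applies to $G'$ (with $n'=|V_1|\gtrsim \eps n$, $m'\approx m$, $\eps'\approx\eps$) and returns a biclique of size $\gtrsim \eps^{d-2}\cdot \eps n\cdot m=\eps^{d-1}mn$ inside $G'$. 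Note the final biclique lives in $V_1\times(B\setminus X)$, not in $V_1\times X$. Your construction sketch is also too thin: ``generic position'' does not suffice, and the paper's matching upper bounds on $\rs_d$ rely on $(k,s)$-subspace-evasive sets of optimal size (Dvir--Lovett, Lemma~\ref{lemma:evasive}) together with a careful blow-up/subsampling scheme.
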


\noindent
In the regime $\eps\gg \max\{m^{-\frac{1}{d-1}}, n^{-\frac{1}{d}}\}$, the implied constant of the lower bound is only exponential in $d$. Therefore, our theorem matches the dependence on $\eps$ coming from the result of Apfelbaum and Sharir \cite{AS07}, as well as the exponential dependence on the dimension obtained by Sudan and Singer \cite{SS22}.

On the other hand, in case $\eps\ll \max\{m^{-\frac{1}{d-1}}, n^{-\frac{1}{d}}\}$, Theorem~\ref{thm:main thm rs} tells us that the largest complete bipartite subgraph might be just a star. Finally, observe that in case $m=n$, there is a large jump around $\varepsilon\approx n^{-1/d}$, where $\rs_d(n, n, I)$ suddenly jumps from $\approx n^{1-1/d}$ to $\approx n^{1+1/d}$.

\subsection{Point-variety incidences}

The number of incidences between points and varieties in the real space has been extensively studied (see Section~\ref{sec:alg geo background} for a gentle introduction on varieties). Pach and Sharir \cite{PS1,PS2} proved that if $\cP$ is a set of $m$ points and $\cV$ is a set of $n$ algebraic curves of degree at most $\Delta$ in $\mathbb{R}^2$ such that $G(\cP,\cV)$ is $K_{s,t}$-free, then $$I(\cP,\cV)=O_{s,t,\Delta}\left(m^{\frac{s}{2s-1}}n^{\frac{2s-2}{2s-1}}+m+n\right).$$
Zahl \cite{Zahl} and Basu and Sombra \cite{BS} extended this bound to dimension 3 and 4, respectively. Subsequently, Fox, Pach, Sheffer, Suk, and Zahl \cite{FPSSZ} established the following general bound in $\mathbb{R}^D$: if $\cP$ is a set of $m$ points and $\cV$ is a set of $n$ varieties in $\mathbb{R}^{D}$ of degree at most $\Delta$, and $G(\cP,\cV)$ is $K_{s,t}$-free, then
\begin{equation}\label{equ:FPSSZ}
    I(\cP,\cV)=O_{D, s,t,\Delta,\varepsilon}\left(m^{\frac{(D-1)s}{Ds-1}+\varepsilon}n^{\frac{D(s-1)}{Ds-1}}+m+n\right),
\end{equation}
for every $\varepsilon>0$. However, this bound is only known to be tight in the special case when $s=2$ and $t$ is sufficiently large, see \cite{Sheffer}.

Here, we consider a similar problem over arbitrary fields. That is, we consider the problem of bounding the number of incidences between points and varieties in $\bF^D$, where $\bF$ is an arbitrary field, under the assumption that the incidence graph is $K_{s, s}$-free (our proofs and results remain essentially the same if we consider the more general problem of forbidding $K_{s,t}$ for $t\geq s$, so we assume $s=t$ to simplify notation).

\begin{theorem}\label{thm:varieties upper}
Let $\cP$ be a set of $m$ points and let $\cV$ be a set of $n$ varieties in $\bF^D$, each of dimension $d$ and degree at most $\Delta$. If the incidence graph $G(\cP, \cV)$ is $K_{s, s}$-free, then \[I(\cP, \cV)\leq O_{d,\Delta, s}(m^{\frac{d}{d+1}} n+m).\]
\end{theorem}

Let us compare our bound with (\ref{equ:FPSSZ}). As $s\rightarrow\infty$, the bound in (\ref{equ:FPSSZ}) approaches the shape $m^{\frac{D-1}{D}}n+m+n$, which matches the one in Theorem \ref{thm:varieties upper} in case $d=D-1$. For smaller $d$, Theorem~\ref{thm:varieties upper} provides even better bounds. Curiously, our bound does not depend on the dimension $D$ of the ambient space, it only depends on the degree and the dimension of the varieties. Moreover, our bounds are essentially tight, as the next theorem shows.

\begin{theorem}\label{thm:construction varieties}
Let $d<D$ be positive integers, $\alpha>0$, and $m, n$ be positive integers such that $n=\lfloor m^{\alpha}\rfloor$, and $m,n$ are sufficiently large with respect to $D$ and $\alpha$. Then there exist a prime $p$, a set of $m$ points $\cP\subseteq \bF_p^D$ and a set $\cV$ of $n$ varieties in $\bF_p^D$ of dimension $d$ and degree at most $\Delta=\lceil(1+\alpha)(d+1)\rceil^2$ such that $G(\cP, \cV)$ does not contain $K_{s, s}$ with $s=\Delta^{1/2}$, and $$I(\cP, \cV)\geq \Omega_{d,\alpha}(m^{\frac{d}{d+1}} n).$$
\end{theorem}

We provide two proofs of Theorem~\ref{thm:varieties upper}. The first one uses the framework of induced Tur\'an problems. We construct a bipartite graph $H_{d,\Delta}$ such that incidence graphs of points and varieties in $\bF^D$ of dimension $d$ and degree at most $\Delta$ avoid $H_{d,\Delta}$ as an induced subgraph, and every vertex of $H_{d,\Delta}$ in one of the parts has degree at most $d+1$. Then, we adapt the ideas of \cite{HMST} to prove bounds on the number of edges of induced $H_{d,\Delta}$-free and $K_{s,s}$-free bipartite graphs. 

An alternative proof of Theorem~\ref{thm:varieties upper} is obtained by establishing a bound on the VC-dimension of incidence graphs and the rate of growth of their shatter functions. To do this we bound the number of incidence-patterns between points and lower-dimensional varieties, extending
the celebrated result of R\'onyai, Babai and Ganapathy \cite{RBG} on the number of zero-patterns of polynomials.

We remark that graphs with bounded VC-dimension avoid induced copies of a certain forbidden bipartite subgraph with similar properties as $H_{d,\Delta}$, but our explicit construction of $H_{d,\Delta}$ is more straightforward. Moreover, in certain situations, like the unit-distance problem for instance, finding specific forbidden bipartite graphs leads to better bounds compared to the ones which follow from VC-dimension arguments.

Finally, in order to prove Theorem~\ref{thm:construction varieties} as well as other constructions in this paper, we use the results from \cite{ST23}, which are based on the random algebraic method originally introduced by \cite{Bukh}.

\subsection{Unit distances in finite fields}

The celebrated Erd\H{o}s unit distance problem \cite{Erdos1,Erdos2} is one of the most notorious open problems in combinatorial geometry, asking to estimate the function $f_d(n)$, the maximum number of unit distances spanned by $n$ points in $\mathbb{R}^d$. The cases $d=2,3$ are the most difficult ones, with a large gap remaining between the best known lower and upper bounds. For $d=2$, the state-of-the-art is $n^{1+\Omega(1/\log\log n)}<f_2(n)<O(n^{4/3})$, where the lower bound is due to Erd\H{o}s \cite{Erdos2}, and the upper bound is due to Spencer, Szemer\'edi and Trotter \cite{SST}. For $d=3$, we know that $\Omega(n^{4/3}\log\log n)<f_3(n)<n^{1.498}$ by \cite{Erdos2} and \cite{Zahl2}, respectively. On the other hand, for $d\geq 4$, it is not difficult to construct a set of points achieving $f_d(n)=\Theta_d(n^2)$ \cite{Lenz}. Indeed, if $d\geq 4$, one can take two orthogonal linear subspaces $V_1$ and $V_2$, each of dimension at least 2. Then, placing $n/2$ points on the origin-centered sphere $S_i\subset V_i$ of radius $1/\sqrt{2}$ for $i=1,2$, we get at least $n^2/4$ unit distances. However, as it is shown in \cite{FPSSZ}, in some sense this construction is the only way one can get a quadratic number of unit distances. More precisely, if we assume that no $s$ points are contained in a $(d-3)$-dimensional sphere, then the maximum number of unit distances is at most 
$$O_{d,s,\varepsilon}(n^{2-\frac{2}{d+1}+\varepsilon})$$
for every $\varepsilon>0$ (see also \cite{FK} for further strengthening). The key observation is that such unit distance graphs are semi-algebraic of complexity at most $2$ containing no copy of $K_{s,s}$. The bounds on the number of edges of such graphs follow from the more general bounds (\ref{equ:semi-algebraic}) on Zarankiewicz's problem for algebraic graphs, which will be discussed in the next section. In the context of finite fields, this problem also received attention when $d=3$. In this dimension, there are stronger natural restrictions on the unit-distance graphs, beyond being $K_{s, s}$-free. Under these conditions, Zahl \cite{Zahl3} proved that a set of $n$ points determines at most $O(n^{3/2})$ unit distances assuming $-1$ is not a square in $\bF$, while Rudnev \cite{Rudnev2} established the upper bound $O(n^{8/5})$ without this assumption. 

Here, we completely resolve the problem of bounding the number of unit distances over arbitrary fields, assuming the unit distance graph is $K_{s,s}$-free. A \textit{unit sphere} in $\bF^d$ with center $a=(a_1, \dots, a_d)\in \bF^d$ is defined as the set of points $(x_1, \dots, x_d)\in \bF^d$ which satisfy $$(x_1-a_1)^2+\dots+(x_d-a_d)^2=1.$$
Furthermore, points $a$ and $b$ are at \emph{unit distance} if $b$ is contained in the unit sphere with center $a$.

\begin{theorem}\label{thm:unit_distance}
Let $\cP$ be a set of $n$ points in $\bF^d$. If the unit distance graph of $\cP$ does not contain $K_{s, s}$, then the number of unit distances spanned by $\cP$ is at most $O_{d,s}(n^{2-\frac{1}{\lceil d/2\rceil +1}})$.
\end{theorem}

Furthermore, this bound is sharp in the following sense.

\begin{theorem}\label{thm:sphere_construction}
Let $n, d$ be positive integers. There exists a constant $s=s(d)$, a finite field $\bF_q$ and a set of $n$ points $\cP\subseteq \bF_q^d$ such that the unit distance graph on $\cP$ does not contain $K_{s, s}$ and  $\cP$ spans $\Omega_d(n^{2-\frac{1}{\lceil d/2\rceil+1}})$ unit distances.
\end{theorem}

\subsection{Algebraic graphs}
 Zarankiewicz's problem asks for the maximum number of edges in an $n$-vertex graph without a copy of $K_{s, s}$. Fox, Pach, Sheffer, Suk, and Zahl \cite{FPSSZ} considered a variant of this problem in which case the host graph is semi-algebraic. We say that a bipartite graph $G=(A,B,E)$ is \emph{semi-algebraic} of description complexity $t$ in $(\mathbb{R}^{d_1},\mathbb{R}^{d_2})$, if $A\subset \mathbb{R}^{d_1}$, $B \subset \mathbb{R}^{d_2}$, and the edges are defined by the sign-patterns of $t$ polynomials $f_1,\dots,f_t:\mathbb{R}^{d_1}\times \mathbb{R}^{d_2}\rightarrow \mathbb{R}$, each of degree at most $t$. In \cite{FPSSZ} it is proved that if such a graph $G$ is $K_{s,s}$-free with $|A|=m$ and $|B|=n$, then
 $$|E(G)|=O_{d_1,d_2,t,s,\varepsilon}\left( m^{\frac{d_2(d_1-1)}{d_1d_2-1}+\varepsilon}n^{\frac{d_1(d_2-1)}{d_1d_2-1}}+m+n\right)$$
for every $\varepsilon>0$. In the special case of interest $d=d_1=d_2$, this gives
\begin{equation}\label{equ:semi-algebraic}
    |E(G)|=O_{d,t,s,\varepsilon}\left((mn)^{\frac{d}{d+1}+\varepsilon}+m+n\right).
\end{equation}

This bound is only known to be tight for $d=2$. It is worth highlighting that this upper bound matches (\ref{equ:1}) up to the $\varepsilon$ error term.
 
We consider the analogous problem for \emph{algebraic graphs}, that is, graphs that are defined with respect to zero-patterns of polynomials over arbitrary fields. Given two sets of points $\cP\subseteq \bF^{d_1}, \cQ\subseteq \bF^{d_2}$, a collection of $t$ polynomials $f_1, \dots, f_t:\bF^{d_1}\times \bF^{d_2}\to \bF$, and a boolean formula $\Phi$, we define the bipartite graph $G$ on the vertex set $\cP\cup \cQ$, where $x\in \cP$ and $y\in \cQ$ are adjacent if and only if 
\[\Phi([f_1(x, y)=0], \dots, [f_t(x, y)=0])=1.\]
We say that $G$ is an \emph{algebraic graph of description complexity $t$} if it can be described by $t$ polynomials $f_1, \dots, f_t$, each of which has degree at most $t$. 

\begin{theorem}\label{thm:algebraic zarankiewicz}
Let $G$ be an algebraic graph of description complexity at most $t$ on the vertex set $\cP\cup \cQ$, where $\cP\subseteq \bF^{d_1}$, $\cQ\subseteq \bF^{d_2}$ and $|\cP|=m$, $|\cQ|=n$. If $G$ is $K_{s, s}$-free, the number of edges in $G$ is at most $$O_{d_1,d_2,t, s}(\min\{m^{1-1/d_1}n, mn^{1-1/d_2}\}).$$
Moreover, this bound is tight, for every $d_1,d_2$, there exists $t$ such that if $m, n$ are sufficiently large and $s\geq d_1+d_2$, the following holds. There exist a field $\bF$ and a $K_{s, s}$-free algebraic graph $G$ of description complexity at most $t$ in $(\bF^{d_1}, \bF^{d_2})$ with parts of size $m, n$ and at least $\Omega(\min\{m^{1-1/d_1}n, mn^{1-1/d_2}\})$ edges.
\end{theorem}

\noindent
\textbf{Paper organization:} In Section~\ref{sec:illustration}, we give a short illustration of our techniques discussing point-hyperplane incidences in 3-dimensional spaces, and introduce the key lemma connecting combinatorics and geometry. Then, in Section~\ref{sec:point-hyperplane incidences}, we prove our general results on point-hyperplane incidences, Theorems~\ref{thm:main thm K_s,s} and Theorem~\ref{thm:main thm rs}, along with the constructions showing the tightness of these bounds. In Section~\ref{sec:point-variety incidences} we discuss point-variety incidences and show two proofs of Theorem~\ref{thm:varieties upper}, while postponing the constructions until Section~\ref{sec:algebraic zarankiewicz}. The unit distance problem is discussed in Section~\ref{sec:unit distances}, where the proofs of Theorems~\ref{thm:unit_distance} and \ref{thm:sphere_construction} are presented. In Section~\ref{sec:algebraic zarankiewicz} we discuss the Zarankiewcz problem for algebraic graphs and show Theorems~\ref{thm:construction varieties} and~\ref{thm:algebraic zarankiewicz}. We conclude our paper with a few remarks.

\section{Forbidden induced patterns and illustration in dimension $3$}\label{sec:illustration}

The key idea used to prove our bounds is that incidence graphs of points and hyperplanes in $\bF^d$ avoid a simple family $\mathcal{F}_d$ of graphs as induced subgraphs. The family $\mathcal{F}_d$ includes all bipartite graphs on vertex classes $\{a_1,\dots,a_d\}$ and $\{b_1,\dots,b_d\}$, where $a_ib_j$ is an edge for $i\geq j-1$, and $a_ib_{i+2}$ is a non-edge for $i=1,\dots,d-2$. 

An alternative and perhaps easier way to think about the family is through the notion of a \textit{pattern}. We define a \textit{pattern} to be an edge labeling of a complete bipartite graph, in which every edge receives one of the labels $0$, $1$ or $*$. Every such pattern $\Pi$ defines a family of graphs $\cF_\Pi$  on the same vertex set, where a graph $F$ is in the family if all edges of the pattern labeled by $0$ do not appear in $F$, all edges labeled by $1$ appear in $F$, while the edges labeled by $*$ may or may not appear. To simplify the terminology, we say that a graph $G$ contains a pattern $\Pi$ if it contains an induced copy of some graph in the family $\cF_\Pi$.

The family $\cF_d$ discussed above can be constructed in this way from the following pattern $\Pi_d$. Namely, $\Pi_d$ is a pattern on a balanced bipartite graph on vertices $a_1, \dots, a_d$, $b_1, \dots, b_d$ where $a_ib_{j}$ is labeled by $1$ for $i\geq j-1$ and $a_ib_{i+2}$ is labeled by $0$, while all other edges are labeled by $*$. See Figure \ref{fig1} for an illustration.

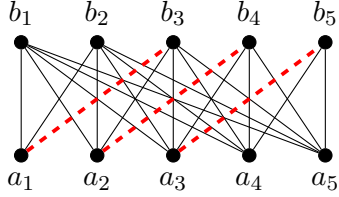
\begin{figure}[ht]
    \begin{center}
	\begin{tikzpicture}
            \node[vertex,label=below:$a_1$] (a1) at (-2,-1) {};
            \node[vertex,label=below:$a_2$] (a2) at (-1,-1) {};
            \node[vertex,label=below:$a_3$] (a3) at (0,-1) {};
            \node[vertex,label=below:$a_4$] (a4) at (1,-1) {};
            \node[vertex,label=below:$a_5$] (a5) at (2,-1) {};

             \node[vertex,label=above:$b_1$] (b1) at (-2,0.5) {};
            \node[vertex,label=above:$b_2$] (b2) at (-1,0.5) {};
            \node[vertex,label=above:$b_3$] (b3) at (0,0.5) {};
            \node[vertex,label=above:$b_4$] (b4) at (1,0.5) {};
            \node[vertex,label=above:$b_5$] (b5) at (2,0.5) {};

            \draw (a5) edge (b5);
            \draw (a5) edge (b4);
            \draw (a5) edge (b3);
            \draw (a5) edge (b2);
            \draw (a5) edge (b1);

            \draw (a4) edge (b5);
            \draw (a4) edge (b4);
            \draw (a4) edge (b3);
            \draw (a4) edge (b2);
            \draw (a4) edge (b1);

            \draw (a3) edge (b4);
            \draw (a3) edge (b3);
            \draw (a3) edge (b2);
            \draw (a3) edge (b1);

            \draw (a2) edge (b3);
            \draw (a2) edge (b2);
            \draw (a2) edge (b1);

            \draw (a1) edge (b2);
            \draw (a1) edge (b1);

            \draw[line width=0.5mm, red, dashed] (a3) -- (b5);
            \draw[line width=0.5mm, red, dashed] (a2) -- (b4);
            \draw[line width=0.5mm, red, dashed] (a1) -- (b3);
        \end{tikzpicture}
    \end{center}
    \caption{The pattern $\Pi_5$, where black edges are labeled 1, and the red dashed edges are labeled 0.}
    \label{fig1}
\end{figure}

The following lemma is the main geometric ingredient that we use, which allows us to reduce the incidence bounds to a problem in extremal graph theory.

\begin{lemma}\label{lemma:forbiddengraphs}
Let $d\geq 3$ be an integer and let $\cP$ be a set of distinct points and $\cH$ be a set of distinct hyperplanes in $\bF^d$. Then, the incidence graph $G(\cP, \cH)$ does not contain the pattern $\Pi_d$.
\end{lemma}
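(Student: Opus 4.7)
The plan is a short proof by contradiction that exploits the $d-2$ non-edges of $\Pi_d$ as successive ``dimension-boosting'' certificates. I would assume for contradiction that such a configuration exists: distinct points $a_1,\dots,a_d\in\cP$ and distinct hyperplanes $b_1,\dots,b_d\in\cH$ satisfying $a_i\in b_j$ for every $i\geq j-1$ and $a_{k-2}\notin b_k$ for every $k\in\{3,\dots,d\}$. For each $k\in\{2,\dots,d\}$ I would define the affine span
$$L_k:=\mathrm{aff}(a_{k-1},a_k,\dots,a_d).$$

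First, I would observe that $L_k\subseteq b_k$: the edge labels place every $a_i$ with $i\geq k-1$ inside $b_k$, so all generators of $L_k$ lie in $b_k$ and therefore so does $L_k$. Next I would invoke the non-edges: for each $k\in\{3,\dots,d\}$ we have $a_{k-2}\notin b_k$, while $L_k\subseteq b_k$. Adjoining a point outside a hyperplane to an affine subspace contained in that hyperplane increases the affine dimension by exactly one, hence
$$\dim L_{k-1}=\dim\mathrm{aff}(L_k\cup\{a_{k-2}\})=\dim L_k+1.$$

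The base case is $\dim L_d=\dim\mathrm{aff}(a_{d-1},a_d)=1$, which uses only that the points of $\cP$ are distinct. Iterating the increment $d-2$ times yields $\dim L_2=d-1$. On the other hand, $L_2=\mathrm{aff}(a_1,\dots,a_d)$ is contained in $b_1\cap b_2$ (again by the edge labels, applied with $j\in\{1,2\}$), and because $b_1$ and $b_2$ are distinct hyperplanes of $\bF^d$, their intersection has affine dimension at most $d-2$. This contradicts $\dim L_2=d-1$ and finishes the proof.

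I do not expect any real obstacle: the only piece of affine geometry needed is the standard fact that a point off a hyperplane raises by one the dimension of any affine subspace sitting inside that hyperplane, together with the observation that two distinct hyperplanes intersect in codimension at least two. The cleanness of the argument reflects the fact that $\Pi_d$ is tailored precisely so that its edge staircase forces the nested containments $L_k\subseteq b_1\cap\cdots\cap b_k$ while its subdiagonal of non-edges provides exactly one dimension witness per level, pushing $\dim L_k$ from $1$ up to the forbidden value $d-1$ inside a subspace of dimension at most $d-2$.
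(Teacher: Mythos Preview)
Your argument is correct and is precisely the dual of the paper's proof: where the paper tracks the \emph{decreasing} dimension of the intersections $\bigcap_{i=1}^k b_i$, using each non-edge $a_{k-2}\notin b_k$ as a witness that the new hyperplane genuinely cuts, you track the \emph{increasing} dimension of the spans $L_k=\mathrm{aff}(a_{k-1},\dots,a_d)$, using the same non-edge as a witness that the new point genuinely enlarges. The terminal contradiction is dual as well: the paper ends with two distinct points inside a $0$-dimensional intersection, while you end with a $(d-1)$-dimensional span inside the at most $(d-2)$-dimensional $b_1\cap b_2$. Neither version is more elementary; they are the same induction read from opposite ends.

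One small omission: you silently assume that the $a_i$ land among the points and the $b_j$ among the hyperplanes, but an induced copy of $\Pi_d$ in the bipartite incidence graph could sit with the roles reversed. The paper handles this in one line by noting that $\Pi_d$ is symmetric (under $a_i\leftrightarrow b_{d+1-i}$), so treating a single orientation suffices; you should add the same remark.
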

\begin{proof}
Assume, for the sake of contradiction, that $G=G(\cP, \cH)$ contains $\Pi_d$. Since the pattern $\Pi_d$ is symmetric, we may assume that the vertices $a_1, \dots, a_d$ correspond to points $x_1, \dots, x_d\in \cP$, and the vertices $b_1, \dots, b_d$ correspond to hyperplanes $H_1, \dots, H_d\in \cH$. We prove by induction that $\dim \bigcap_{i=1}^k H_i= d-k$ for all $2\leq k\leq d$. This suffices to derive the contradiction by observing that $x_{d-1}, x_d\in \bigcap_{i=1}^d H_i$, which is at most a $0$-dimensional space, i.e. a single point. This contradicts the assumption that $x_{1}, \dots, x_d$ are distinct.

To show that $\dim \bigcap_{i=1}^k H_i = d-k$, we begin by observing that $H_1\cap H_2$ must have dimension $d-2$, since $H_1, H_2$ are distinct hyperplanes with non-empty intersection. For any $k\geq 3$, the induction hypothesis implies that $ \bigcap_{i=1}^{k-1} H_i$ is an affine space of dimension $d-k+1$, which is not a subspace of $H_k$ since $x_{k-2}\in \bigcap_{i=1}^{k-1} H_i$ and $x_{k-2}\notin H_k$. Thus, $H_k$ must intersect $\bigcap_{i=1}^{k-1} H_i$ in an affine subspace of dimension $\dim \bigcap_{i=1}^{k-1} H_i-1=d-k$. This completes the proof.
\end{proof}

For a linear algebraic perspective on the pattern $\Pi_d$, one might consider the $(d-2)\times (d-2)$ matrix whose rows and columns are indexed by $a_1, \dots, a_{d-2}$ and $b_3, \dots, b_d$, where the entry corresponding to $a_i$ and $b_j$ equals $1$ if the edge $a_ib_j$ is labeled $0$ and equals $0$ if the edge $a_ib_j$ is labeled $1$ (entries corresponding to the edges labeled by $*$ may be arbitrary). This defines an upper-triangular matrix with non-zero diagonal entries, a condition which ensures full rank. 
A similar family of graphs was also studied recently by Sudakov and Tomon \cite{ST_Ramsey} to establish Ramsey theoretic properties of algebraically defined graphs. Using the above lemma, we can just focus on the hereditary family of graphs that avoid the pattern $\Pi_d$. Then, we use the dependent random choice technique \cite{FS} and combinatorial methods to show that if $G$ is a $\Pi_d$-free graph with sufficiently many edges, then $G$ must contain a large complete bipartite graph.

\medskip

Let us now illustrate how Lemma~\ref{lemma:forbiddengraphs} can be combined with the technique of dependent random choice to give a very short proof of Theorem~\ref{thm:upper bounds K_s,s symmetric} in dimension three. Note that when $d=3$, the forbidden pattern $\Pi_3$ is quite simple, since containing the pattern $\Pi_3$ is equivalent to containing the bipartite graph $K_{3,3}$ minus an edge as an induced subgraph.

Throughout this paper, we use standard graph theoretic notation. Given a graph $G$ and a vertex $v\in V(G)$, we denote by $\deg v$ the degree of $v$, and by $N(v)$ the neighbourhood of $v$. Also, if $v_1,\dots,v_t\in V(G)$, we denote by $N(v_1,\dots,v_t)$ the \emph{common neighbourhood} of $v_1,\dots,v_t$, i.e. $N(v_1,\dots,v_t)=N(v_1)\cap \dots\cap N(v_t)$.

\begin{lemma}\label{lemma:K_s,s for d=3}
Let $s\geq 2$ be an integer, let $\cP$ be a set of $n$ points and $\cH$ be a set of $n$ hyperplanes in $\bF^3$. If $G(\cP, \cH)$ does not contain $K_{s, s}$, then $I(\cP, \cH)\leq 2sn^{3/2}$. 
\end{lemma}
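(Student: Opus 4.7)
The plan is to combine Lemma~\ref{lemma:forbiddengraphs} (with $d=3$) and the $K_{s,s}$-freeness hypothesis to bound $\sum_{H \in \cH} \binom{d(H)}{2}$, then conclude by convexity. Lemma~\ref{lemma:forbiddengraphs} says $G(\cP, \cH)$ has no induced $K_{3,3}-e$, which I will use through its structural consequence: for any three hyperplanes $H_1, H_2, H_3$, either $|N(H_1) \cap N(H_2) \cap N(H_3)| \leq 1$ or $N(H_1) \cap N(H_2) \subseteq N(H_3)$. The case $s=2$ is immediate from the K\H{o}v\'ari-S\'os-Tur\'an theorem, so I may assume $s \geq 3$.

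Call a pair $\{x, y\} \subseteq \cP$ \emph{heavy} if $|N(x) \cap N(y)| \geq s$. For each heavy pair, set $T(x, y) = N(x) \cap N(y)$ and $U(x, y) = \bigcap_{H \in T(x, y)} N(H)$. Since $\{x, y\} \subseteq U(x, y)$, applying the structural consequence to every triple in $T(x, y)$ gives $N(H) \cap N(H') = U(x, y)$ for all distinct $H, H' \in T(x, y)$; the $K_{s,s}$-freeness (as $|T(x,y)| \geq s$ hyperplanes all contain $U(x, y)$) then forces $|U(x, y)| \leq s - 1$. I will call $(U(x, y), T(x, y))$ the \emph{block} of $\{x, y\}$, and write a generic block as $B = (U_B, T_B)$. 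The key observation is that any two distinct blocks share at most one hyperplane: if $H, H'$ are distinct elements of $T_{B_1} \cap T_{B_2}$, then $U_{B_1} = N(H) \cap N(H') = U_{B_2}$, which (since $T$ is determined by $U$) forces the blocks to coincide. Consequently, for any hyperplane $H$, the sets $T_B \setminus \{H\}$ taken over all blocks $B \ni H$ are pairwise disjoint subsets of $\cH \setminus \{H\}$ of size $\geq s-1$, so at most $(n-1)/(s-1)$ blocks contain $H$. Summing over hyperplanes gives $\sum_B |T_B| \leq n(n-1)/(s-1)$.

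Every heavy pair lies inside $U_B$ for its unique block $B$ and contributes $|T_B|$ to $\sum_{\text{heavy}} |N(x) \cap N(y)|$, so
\[
\sum_{\text{heavy}} |N(x) \cap N(y)| \leq \sum_B \binom{|U_B|}{2} |T_B| \leq \binom{s-1}{2} \cdot \frac{n(n-1)}{s-1} = \frac{(s-2)n(n-1)}{2}.
\]
Light pairs contribute at most $(s-1)\binom{n}{2}$, so $\sum_H \binom{d(H)}{2} = \sum_{\{x, y\}} |N(x) \cap N(y)| \leq sn^2$. Jensen's inequality on the left-hand side gives $\sum_H \binom{d(H)}{2} \geq n \binom{I/n}{2}$, hence $I(I - n) \leq 2sn^3$, which yields $I \leq 2sn^{3/2}$ after a routine case analysis on whether $I \leq 2n$.

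The main technical step is the block decomposition: verifying that $N(H) \cap N(H')$ is constantly equal to $U(x, y)$ for all distinct $H, H' \in T(x, y)$, and that distinct blocks are almost disjoint on the hyperplane side. Everything else reduces to routine counting; the delicate interplay between induced $K_{3,3}-e$-freeness and $K_{s,s}$-freeness lives in this step.
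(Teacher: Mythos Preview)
Your proof is correct, but it follows a genuinely different route from the paper's. The paper argues by contradiction via dependent random choice: assuming $e(G)\geq 2sn^{3/2}$, it samples a random pair $v_1,v_2\in\cP$, shows that $\bE\big[|N(v_1,v_2)|\big]\geq s^2$ while the expected number of pairs in $N(v_1,v_2)$ with small common neighbourhood is at most $s^2/2$, and then uses $K_{s,s}$-freeness to exhibit a missing edge that completes an induced $K_{3,3}-e$. Your argument is instead deterministic and structural: you extract from the forbidden pattern a ``block'' decomposition of the heavy pairs, use the near-disjointness of blocks on the hyperplane side to bound $\sum_B|T_B|$, and thereby bound the cherry count $\sum_H\binom{d(H)}{2}$ directly before applying Jensen. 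Both arguments reach the same constant $2s$. The paper's probabilistic approach is shorter and is what drives the general $d$-dimensional Lemma~\ref{lemma:main upper bounds}; your approach gives a cleaner picture of where the incidences live (the complete bipartite pieces $U_B\times T_B$), at the cost of a slightly more delicate verification that $U$ determines $T$ and that distinct blocks overlap in at most one hyperplane. The one step you left implicit---that every pair inside $U_B$ is heavy with block exactly $B$, so that the heavy-pair sum really equals $\sum_B\binom{|U_B|}{2}|T_B|$---does follow from the structural consequence, but it is worth spelling out.
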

\begin{proof}
Throughout the proof, we denote the bipartition of the incidence graph $G=G(\cP, \cH)$ by $A\cup B$, where $A$ corresponds to the set of points and $B$ to the set of hyperplanes. This will help to separate the graph-theoretic arguments from the geometric ones.

We show that if $G=G(\cP, \cH)$ satisfies $e(G)\geq 2sn^{3/2}$ and does not contain $K_{s, s}$, then it must contain the pattern $\Pi_3$. To do this, we use the method of dependent random choice. More precisely, our goal is to find four vertices $v_1, v_2\in A, u_1, u_2\in B$ forming a copy of $K_{2,2}$ such that the common neighbourhoods $N(v_1)\cap N(v_2)$ and $N(u_1)\cap N(u_2)$ have size at least $s$. Since the graph $G$ does not contain $K_{s, s}$, there will be an edge missing between these two common neighbourhoods, which allows us to find the pattern $\Pi_3$ in $G$.

Let us begin by picking a random two vertex subset $\{v_1, v_2\}\subset A$ chosen from the uniform distribution on all $\binom{n}{2}$ pairs, and consider the common neighbourhood $X=N(v_1)\cap N(v_2)$ in $B$. The expected size of this common neighbourhood can be lower bounded by counting the number of three 
 element sets $\{v_1, v_2, u\}$ with $v_1, v_2\in A, u\in B$ and $v_1u, v_2u\in E(G)$ (such sets are sometimes also called as cherries). If the number of such sets is denoted by $T$, we have
\begin{align*}
\bE\Big[|X|\Big]&=\frac{T}{\binom{n}{2}}=\frac{1}{\binom{n}{2}}\sum_{u\in B}\binom{\deg u}{2}\geq \frac{2}{n-1}\binom{\frac{1}{n}\sum_{u\in B}\deg u}{2}\geq \frac{e(G)^2}{4n^3}\geq s^2,
\end{align*}
where the first inequality holds by the convexity of the function $\binom{x}{2}=\frac{x(x-1)}{2}$ for $x\geq 1$. On the other hand, let us denote by $Y$ the number of two element sets $\{u_1, u_2\}\subset X$ with common neighbourhood $N(u_1)\cap N(u_2)$ of size less than $s$. Then the expected value of $Y$ can be bounded as follows. For each two element set $\{u_1, u_2\}\subset B$, we have $u_1,u_2\in X$ if $v_1, v_2\in N(u_1)\cap N(u_2)$. Hence, if $|N(u_1)\cap N(u_2)|<s$, then the probability of $u_1,u_2\in X$ is less than $\binom{s}{2}/\binom{n}{2}<\left(\frac{s}{n}\right)^2$, so
\[\bE\left[Y\right]\leq \sum_{\{u_1, u_2\}\subset B}\left(\frac{s}{n}\right)^2\leq \binom{n}{2}\left(\frac{s}{n}\right)^2\leq \frac{s^2}{2}.\]

Since $\bE[|X|-Y]\geq s^2/2$, there exists a pair of vertices $v_1, v_2$ satisfying $|X|-Y\geq s^2/2$. This means that at least one pair $u_1, u_2\in X$ has a common neighbourhood of size at least $s$. Also, since $X\geq s^2/2\geq s$, the vertices $v_1, v_2$ have common neighbourhood of size at least $s$. As $G$ does not contain $K_{s, s}$, there must be vertices $v_3\in N(u_1)\cap N(u_2), u_3\in N(v_1)\cap N(v_2)$ for which $u_3v_3\notin E(G)$. But then $v_1, v_2, v_3, u_1, u_2, u_3$ form the pattern $\Pi_3$, which is a contradiction. This completes the proof.  
\end{proof}

\section{Point-hyperplane incidences}\label{sec:point-hyperplane incidences}

\subsection{Upper bounds for $K_{s, s}$-free incidence graphs}\label{sec:upper bounds K_{s, s}}

In this section, we prove the upper bounds in Theorem \ref{thm:main thm K_s,s}. We prepare the proof with the following lemma about finding the pattern $\Pi_d$ under some simple conditions. We recall that $\Pi_d$ is the pattern on a balanced bipartite graph on vertices $a_1, \dots, a_d$, $b_1, \dots, b_d$ where $a_ib_{j}$ is labeled by $1$ for $i\geq j-1$ and $a_ib_{i+2}$ is labeled by $0$, while all other edges are labeled by $*$.

\begin{lemma}\label{lemma:auxiliary upper bounds}
Let $s$ and $2\leq t\leq d$ be positive integers, and let $G=(A,B;E)$ be a $K_{s,s}$-free bipartite graph. Assume that $v_1, \dots, v_t\in A$ satisfy the following conditions:
\begin{itemize}
    \item $N(v_1, v_2)\supsetneq N(v_1, v_2, v_3)\supsetneq\cdots\supsetneq N(v_1, \dots, v_t)$,
    \item there exists a set $X\subseteq N(v_1, \dots, v_t)$ of size at least $d^{d+1}s$ such that every $(d+1-t)$-tuple of elements in $X$ has a common neighbourhood of size at least $s$.
\end{itemize}
Then $G$ contains the pattern $\Pi_d$.
\end{lemma}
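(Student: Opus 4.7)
The plan is to construct an induced copy of a graph in $\cF_{\Pi_d}$ inside $G$ by using the given $v_i$'s as part of the pattern. Specifically, I would identify $v_k$ with the role of $a_{d-k+1}$ in $\Pi_d$, so that $v_1,\dots,v_t$ occupy the $t$ ``rightmost'' columns of the staircase; it then remains to produce $a_1,\dots,a_{d-t}\in A$ and $b_1,\dots,b_d\in B$ with the correct adjacencies.

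First I would fill in the $b_j$'s that witness the non-edges involving the $v_k$'s. For each $k=3,\dots,t$, the pattern prescribes that $v_k$ be the unique non-neighbour of $b_{d-k+3}$ among $v_1,\dots,v_k$, so using the strict inclusion
\[
N(v_1,\dots,v_{k-1})\supsetneq N(v_1,\dots,v_k),
\]
I pick $b_{d-k+3}\in N(v_1,\dots,v_{k-1})\setminus N(v_1,\dots,v_k)$. These $b_{d-t+3},\dots,b_d$ are pairwise distinct for free, since $b_{d-k+3}\notin N(v_k)$ while $b_{d-l+3}\in N(v_k)$ whenever $l<k$.

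Next I would build $b_1,\dots,b_{d-t+2}\in X$ and $a_1,\dots,a_{d-t}\in A$ greedily. Start by picking any two distinct $b_1,b_2\in X$ disjoint from the previously chosen $b$'s. For $i=2,\dots,d-t+1$, let $Y_i=N(b_1,\dots,b_i)\cap A$. Two facts drive the induction: (i) $|Y_i|\ge s$, because $\{b_1,\dots,b_i\}$ extends to a $(d+1-t)$-subset of $X$ whose common neighbourhood has size $\ge s$, and (ii) the $K_{s,s}$-free hypothesis implies that fewer than $s$ vertices $b\in B$ satisfy $Y_i\subseteq N(b)$ (else one gets $K_{s,s}$ on $s$ such $b$'s together with $s$ elements of $Y_i$). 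Combining these, I pick $b_{i+1}\in X$ avoiding the at most $d+s$ previously chosen or ``bad'' vertices, and then $a_{i-1}\in Y_i\setminus N(b_{i+1})$, which is non-empty by the choice of $b_{i+1}$. The hypothesis $|X|\ge d^{d+1}s$ is comfortably enough to keep all these greedy choices available.

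Finally I would verify that all chosen vertices are distinct and that the pattern $\Pi_d$ is realized. Distinctness of each $a_{i-1}$ from $v_1,\dots,v_t$ is automatic since $b_{i+1}\in X\subseteq N(v_1,\dots,v_t)$ while $a_{i-1}\notin N(b_{i+1})$; distinctness of $a_{i-1}$ from an earlier $a_j$ follows from $a_{i-1}\in N(b_{j+2})$ versus $a_j\notin N(b_{j+2})$. The required edges of $\Pi_d$ below the staircase follow from $a_{i-1}\in Y_i$ and from $X\subseteq N(v_1,\dots,v_t)$, while the marked non-edges are precisely those built into the selection. The main obstacle I expect is purely bookkeeping: carefully tracking which ``bad'' vertices must be avoided at each greedy step and matching the indexing $v_k\leftrightarrow a_{d-k+1}$ with the staircase labels of $\Pi_d$ without off-by-one errors.
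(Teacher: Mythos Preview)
Your proof is correct. Both your approach and the paper's begin identically, using the chain of strict inclusions $N(v_1,\dots,v_{k-1})\supsetneq N(v_1,\dots,v_k)$ to produce the ``top'' $t-2$ of the $b$-vertices (the paper calls them $u_1,\dots,u_{t-2}$). The difference lies in how the remaining $d+2-t$ vertices in $B$ and the $d-t$ new vertices in $A$ are found. The paper does this in one shot: it defines a $(d+2-t)$-uniform hypergraph on $X$ whose edges are the ``bad'' ordered tuples $(u_{t-1},\dots,u_d)$ for which some suffix common neighbourhood $N(u_i,\dots,u_d)$ fails to shrink when $u_i$ is added, bounds the number of such edges by $s(d+1-t)|X|^{d+1-t}$ using $K_{s,s}$-freeness, deduces from $|X|\ge d^{d+1}s$ that a non-edge exists, and then reads off all the remaining $v$'s from the resulting chain of strict inclusions. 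Your greedy construction, by contrast, interleaves the choices of $b_{i+1}$ and $a_{i-1}$ one step at a time, using $K_{s,s}$-freeness at each step to guarantee that fewer than $s$ vertices of $X$ contain $Y_i$ in their neighbourhood. This is more elementary---it avoids the hypergraph count entirely---and in fact uses far less of the hypothesis on $|X|$: you only need $|X|$ to exceed roughly $d+s$, whereas the paper's averaging step is precisely where the full bound $|X|\ge d^{d+1}s$ is consumed. The paper's route has the minor aesthetic advantage of selecting all the $B$-side vertices simultaneously before touching $A$, but your argument is shorter and yields a sharper quantitative requirement on $|X|$.
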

\begin{proof}
The first condition implies that there exist vertices $u_1, \dots, u_{t-2}$ which satisfy $$u_i\in N(v_1, \dots, v_{i+1})\backslash N(v_{i+2}).$$ To find vertices $u_{t-1}, \dots, u_d$ and $v_{t+1}, \dots, v_d$, we define a $(d+2-t)$-uniform hypergraph $F$ on the set of vertices $X$. Let $\prec$ be an arbitrary total ordering on $X$, then the edges of the hypergraph $F$ will be those sets $\{u_{t-1}, \dots, u_d\}$ with $u_{t-1}\prec\dots\prec u_d$ for which there exists an index $i\in \{t-1,\dots,d-1\}$ such that $N(u_i, \dots, u_d)=N(u_{i+1}, \dots, u_d)$. We claim that $F$ has at most $$s (d+1-t) |X|^{d+1-t}$$ edges. To this end, note that there are $d+1-t$ choices for the index $i$ at which the equality $N(u_i, \dots, u_d)=N(u_{i+1}, \dots, u_d)$ can occur. Furthermore, for each of these choices, there are at most $|X|^{d-i}$ ways to choose $\{u_{i+1}, \dots, u_d\}$. Having fixed these vertices, which by assumption satisfy $|N(u_{i+1}, \dots, u_d)|\geq s$, we have less than $s$ vertices $u_i$ for which $N(u_i)\supseteq N(u_{i+1}, \dots, u_d)$, otherwise we have a $K_{s, s}$. Finally, there are at most $|X|^{i-(t-1)}$ ways to choose vertices $u_{t-1}, \dots, u_{i-1}$. Thus, the total number of edges of the hypergraph $F$ is at most 
\[|E(F)|< (d+1-t)|X|^{d-i}\cdot s\cdot |X|^{i-(t-1)}=s (d+1-t) |X|^{d+1-t},\]
as claimed. 

Observe that the inequality $\binom{|X|}{d+2-t}\geq \big(\frac{|X|}{d+2-t}\big)^{d+2-t}\geq s(d+1-t)|X|^{d+1-t}$ is satisfied by our assumption that $|X|\geq s d^{d+1}$. Hence, there exist $u_{t-1}\prec\dots\prec u_d$ in $X$ such that $\{u_{t-1},\dots,u_d\}$ is not an edge of $F$, or equivalently, $N(u_i, \dots, u_d)\subsetneq N(u_{i+1}, \dots, u_d)$ for all indices $t-1\leq i\leq d-1$. Picking vertices $$v_{i+2}\in N(u_{i+1}, \dots, u_d)\backslash N(u_i, \dots, u_d)$$ for all $t-1\leq i\leq d-1$, the vertices $v_1,\dots,v_d,u_1,\dots,u_d$ span a subgraph with pattern $\Pi_d$.
\end{proof}

The following is the main graph-theoretical lemma which allows us to prove the upper bounds on the number of incidences of points and hyperplanes. Let us recall that we have defined $\alpha_t=\frac{t}{d+2-t}$ for $t\in \{2, \dots, d\}$ and $\beta_t=\frac{t}{d+1-t}$ for $t\in \{1, \dots, d\}$.

\begin{lemma}\label{lemma:main upper bounds}
For integers $d,s\geq 2$, there exists $C=C(d, s)>0$ such that the following holds for every sufficiently large $m$. Let $G=(A,B;E)$ be a bipartite graph, $|A|=m, |B|=n$ and $n=m^\alpha$. If $G$ contains no $K_{s,s}$ as a subgraph and does not contain the pattern $\Pi_d$, then
$$e(G)\leq \begin{cases} Cm  &\mbox{ if } \alpha\in [0,\beta_1],\\
Cmn^{1-\frac{1}{t}} &\mbox{ if }\alpha\in [\alpha_t,\beta_t]\mbox{ for some }t\in \{2,\dots,d\},\\
Cm^{1-\frac{1}{d+2-t}}n&\mbox{ if }\alpha\in [\beta_{t-1},\alpha_t]\mbox{ for some }t\in \{2,\dots,d\},\\
Cn  &\mbox{ if } \alpha\in [\beta_d,\infty).\end{cases}$$
Moreover, if $\alpha=1$, i.e. when $m=n$, the constant $C=8(s+d^3)$ suffices.
\end{lemma}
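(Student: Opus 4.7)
The strategy is to assume $e(G)$ exceeds the claimed bound and derive a contradiction by producing the forbidden pattern $\Pi_d$ via Lemma~\ref{lemma:auxiliary upper bounds}. The four ranges of $\alpha$ correspond to four variants of the same argument: the two main regimes $\alpha \in [\alpha_t, \beta_t]$ (bound $C m^{1-1/t} n$, handled by finding $t$-tuples in $A$) and $\alpha \in [\beta_{t-1}, \alpha_t]$ (bound $C m n^{1-1/(d+2-t)}$, handled symmetrically by swapping $A$ and $B$ and working with $(d+2-t)$-tuples in $B$), together with the two boundary regimes $[0,\beta_1]$ and $[\beta_d,\infty)$ which arise as the limit cases $t=1$ and $t=d$. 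I would focus on the representative range $\alpha \in [\alpha_t, \beta_t]$ with $2\leq t\leq d$, assume $e(G) > C m^{1-1/t} n$ for a constant $C=C(d,s)$ to be chosen at the end, and verify the hypotheses of Lemma~\ref{lemma:auxiliary upper bounds} in three phases.

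\textbf{Phase 1 (building a strict chain in $A$).} First I would perform the standard dyadic regularization, passing to a subgraph where both $A$-degrees and $B$-degrees are within a constant factor of their averages, at the cost of only a constant factor in $e(G)$. Then I would construct $v_1,\dots,v_t\in A$ with common neighborhood $Y_t=N(v_1,\dots,v_t)\subseteq B$ of size at least $K_0:=d^{d+1}s$ and the strict-chain property $N(v_1,v_2)\supsetneq \dots \supsetneq N(v_1,\dots,v_t)$. To achieve the size condition, convexity of $\binom{x}{i}$ gives that the average common neighborhood of a random $i$-tuple in $A$ is at least a constant multiple of $e(G)^i/(m^{i-1}n^i)\geq c_i C^i$, which exceeds any constant for $C$ large enough; so throughout a greedy selection one can keep $|Y_i|$ huge. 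To secure strict inclusions, note that once $|Y_{i-1}|\geq s$, at most $s-1$ vertices of $A$ are adjacent to all of $Y_{i-1}$ — otherwise $s$ such vertices together with $s$ members of $Y_{i-1}$ produce a $K_{s,s}$ — so many candidates $v_i$ give $N(v_i)\cap Y_{i-1}\subsetneq Y_{i-1}$.

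\textbf{Phase 2 (finding the high-codegree set $X$).} Given $v_1,\dots,v_t$ and $Y_t$, the next step is to extract $X\subseteq Y_t$ of size $\geq d^{d+1}s$ in which every $(d+1-t)$-subset has at least $s$ common neighbors in $A$. I would call a $(d+1-t)$-subset of $Y_t$ \emph{bad} if its common neighborhood in $A$ has fewer than $s$ elements, and bound the number of bad tuples by a Kővári--Sós--Turán style count: by $K_{s,s}$-freeness, any bad tuple is pinned down by its at most $s-1$ common neighbors in $A$, which together with a convexity estimate makes bad tuples a small fraction of all $(d+1-t)$-subsets of $Y_t$. A random subset of $Y_t$ of the right size, followed by deleting one element from each surviving bad tuple, yields the required $X$, provided $|Y_t|$ is sufficiently large — which we can guarantee by taking $K_0$ (and hence $C$) sufficiently large in Phase~1.

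\textbf{Phase 3 (producing the pattern).} Phases~1 and~2 together furnish exactly the hypotheses of Lemma~\ref{lemma:auxiliary upper bounds}, which then produces an induced copy of $\Pi_d$ in $G$, contradicting our assumption. The symmetric regime $\alpha\in[\beta_{t-1},\alpha_t]$ is handled by applying the argument to the bipartite graph with $A$ and $B$ exchanged, with the index $t$ replaced by $d+2-t$. The boundary regimes $[0,\beta_1]$ and $[\beta_d,\infty)$ arise from the same argument run with $t=1$ or $t=d$, where the chain of common neighborhoods degenerates and only one of Phase~1 or Phase~2 is needed, combined with the trivial degree bound in the small side. The main obstacle will be Phase~1: simultaneously enforcing largeness of $|Y_i|$ at every step (which is what fixes the exponent $1-1/t$ in the bound) and strict decrease of the $Y_i$ (which is what $K_{s,s}$-freeness is used for), while carrying enough candidates forward to the next step. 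Tracking the constants through all three phases — and in particular verifying that the counting in Phase~2 costs only polynomial factors in $s$ and $d$ — is what yields the explicit constant $C=8(s+d^3)$ for the symmetric case $\alpha=1$.
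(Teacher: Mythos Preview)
Your overall architecture --- reduce to Lemma~\ref{lemma:auxiliary upper bounds}, handle the regime $\alpha\in[\alpha_t,\beta_t]$ directly, and obtain the remaining regimes by swapping $A$ and $B$ and replacing $t$ by $d+2-t$ --- is exactly the paper's strategy. The gap is in your decoupling of Phase~1 and Phase~2.

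In Phase~2 you write that ``by $K_{s,s}$-freeness, any bad tuple is pinned down by its at most $s-1$ common neighbors in $A$, which together with a convexity estimate makes bad tuples a small fraction of all $(d+1-t)$-subsets of $Y_t$.'' This does not work: a bad tuple is one with \emph{few} common neighbors, and neither $K_{s,s}$-freeness nor a K\H{o}v\'ari--S\'os--Tur\'an count limits the number of such tuples. Indeed, once $v_1,\dots,v_t$ are fixed and $Y_t=N(v_1,\dots,v_t)$ is known to be large, nothing you have established prevents \emph{every} $(d+1-t)$-subset of $Y_t$ from being bad; the only common neighbors you are guaranteed are $v_1,\dots,v_t$ themselves, and $t$ may be far smaller than $s$. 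So Phase~2 cannot succeed on the output of an arbitrary Phase~1, no matter how large $K_0$ is taken.

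The paper avoids this by doing Phases~1 and~2 \emph{simultaneously} via dependent random choice: one samples a (random, good) $t$-tuple $(v_1,\dots,v_t)$ from $A$, sets $X=|N(v_1,\dots,v_t)|$, and lets $Y$ count the bad $(d+1-t)$-tuples inside that common neighborhood. The point is that a fixed bad tuple $(u_t,\dots,u_d)$ lands in $N(v_1,\dots,v_t)$ only if all of $v_1,\dots,v_t$ fall in $N(u_t,\dots,u_d)$, an event of probability at most roughly $(s/n)^t$; summing over at most $m^{d+1-t}$ bad tuples and using $\alpha\le\beta_t$ (so $m^{d+1-t}\le n^t$) gives $\mathbb{E}[Y]\le (2s)^t$, while convexity gives $\mathbb{E}[X]\ge (C/4)^t$. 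Then $\mathbb{E}[X-Y]\ge d^{d+1}s$ for $C$ large enough, and the strict-chain condition is secured cheaply during sampling by excluding, at each step, the at most $s-1$ vertices whose neighborhood contains the current common neighborhood. The coupling between the random choice of $v_1,\dots,v_t$ and the scarcity of bad tuples inside their common neighborhood is the crux --- it cannot be separated into two independent phases as you propose. This also means the ``main obstacle'' is not Phase~1 (the strict-chain is nearly free, costing only the removal of $<s$ vertices per step) but precisely this coupling, and your dyadic regularization in Phase~1 is both unnecessary and would spoil the explicit constant $8(s+d^3)$ for $\alpha=1$.
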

\begin{proof}
Let us start by addressing the cases when $\alpha\in [\alpha_t, \beta_t]$ for some $t\in \{2,\dots,d\}$ or $\alpha\in [\beta_d,\infty)$. In the latter case, we set $t:=d$. Suppose that $e(G)\geq C(mn^{1-\frac{1}{t}}+n)$ for some large constant $C$, and observe that $mn^{1-\frac{1}{t}}\geq n$ if $\alpha\in [\alpha_t, \beta_t]$, and $n\geq mn^{1-\frac{1}{t}}$ if $\alpha\in [\beta_d,\infty)$. We show that there exists a $t$-tuple of vertices $v_1, \dots, v_t\in A$ which satisfies the assumptions of Lemma~\ref{lemma:auxiliary upper bounds}, which then implies that $G$ contains the pattern $\Pi_d$, contradiction. To do this, we use a variant of the dependent random choice method.

We call an ordered $t$-tuple of vertices $(v_1, \dots, v_t)$ \textit{bad} if $N(v_1, \dots, v_t)\geq s$ and $N(v_1, \dots, v_i)=N(v_1, \dots, v_{i+1})$ for some $i\in \{1,\dots,t-1\}$, and \emph{good} otherwise. We choose a random good $t$-tuple $(v_1, \dots, v_t)$ with the following sampling procedure. We sample the vertices $v_i$ in order, starting from $v_1$ which is a uniformly random element of $A$. At step $i$, having chosen vertices $v_1, \dots, v_{i-1}$, we have two cases. 
\begin{description}
\item[Case 1.] $|N(v_1, \dots, v_{i-1})|<s$.
In this case, any $t$-tuple containing $v_1, \dots, v_{i-1}$ is good, and then all subsequent vertices $v_j$ for $j\geq i$ can be sampled uniformly at random from $A\backslash\{v_1, \dots, v_{j-1}\}$.

\item[Case 2.] $|N(v_1, \dots, v_{i-1})|\geq s$.
Define the set $S_i=\{v\in A: N(v)\supseteq N(v_1, \dots, v_{i-1})\}$. Since $G(P, H)$ does not contain a copy of $K_{s, s}$, there are less than $s$ vertices in the set $S_i$. Sample $v_i$ uniformly at random from $A\backslash (S_i\cup\{v_1,\dots,v_{i-1}\})$.
\end{description}
Let us denote by $X$ the size of the common neighbourhood of $v_1, \dots, v_t$, and let $Y$ be the number of $(d+1-t)$-tuples of vertices $u_{t}, \dots, u_{d}\in N(v_1, \dots, v_t)$ which has less than $s$ common neighbours. If we show that $\bE[X-Y]\geq d^{d+1}s$, then one can pick a good $t$-tuple $v_1, \dots, v_t$ satisfying $X-Y\geq d^{d+1}s$ and delete one vertex from every $(d+1-t)$-tuple in $N(v_1, \dots, v_t)$ which has less than $s$ common neighbours. Then, one is left with the $t$-tuple $v_1, \dots, v_t$ and a subset of their common neighbourhood which satisfy all conditions of Lemma~\ref{lemma:auxiliary upper bounds}.

Thus, the main objective is to show that $\bE[X-Y]\geq d^{d+1}s$. By linearity of expectation, we have $\bE[X]=\sum_{u\in B}\Pb[u\in N(v_1, \dots, v_t)]$. Let us fix a vertex $u\in B$ with $\deg u\geq d+s$, and estimate $\Pb[u\in N(v_1, \dots, v_t)]$. We have 
\begin{align*}
\Pb[u\in N(v_1, \dots, v_t)]=\Pb[v_1, \dots, v_t\in N(u)]&=\prod_{i=1}^t\Pb\Big[v_i\in N(u)\Big|v_1, \dots, v_{i-1}\in N(u)\Big]\\
&\geq \prod_{i=1}^t \frac{\deg u -s-i+1}{m-i+1}>\left(\frac{\deg u -s-d}{m}\right)^t.
\end{align*}
Let $B_0\subseteq B$ be the set of vertices $u$ satisfying $\deg u\geq 2(d+s)$. Then, the number of edges incident to $B_0$ is $\sum_{u\in B_0}\deg u\geq C(mn^{1-1/t}+n)-2(d+s)n\geq e(G)/2$ for $C\geq 4(d+s)$. Therefore,
\begin{align*}
\bE[X]&\geq \sum_{u\in B_0} \left(\frac{\deg u -s-d}{m}\right)^t\geq \sum_{u\in B_0} \left(\frac{\deg u}{2m}\right)^t\\
&\geq |B_0|\left(\frac{\sum_{u\in B_0}\deg u}{2|B_0|m}\right)^t\geq n\left(\frac{e(G)}{4mn}\right)^t\geq \left(\frac{e(G)}{4mn^{1-1/t}}\right)^t,
\end{align*}
where the third inequality holds by convexity.
On the other hand, the probability that a given $(d+1-t)$-tuple of vertices $u_t, \dots, u_d\in B$ which has less than $s$ common neighbours lies in $N(v_1, \dots, v_t)$ can be bounded by
\begin{align*}
    \Pb[u_t, \dots, u_{d}\in N(v_1, \dots, v_t)]=\Pb[v_1, \dots, v_t\in N(u_t, \dots, u_{d})]\leq \left(\frac{|N(u_t, \dots, u_d)|}{m-s-d}\right)^t\leq (2s)^t m^{-t},
\end{align*}
assuming $m$ is sufficiently large with respect to $s$ and $d$. Thus, the expectation of $Y$ can be bounded as
\[\bE[Y]\leq \sum_{\substack{u_t, \dots, u_d\in B\\|N(u_t, \dots, u_d)|<s}} \left(\frac{2s}{m}\right)^t\leq n^{d+1-t}\left(\frac{2s}{m}\right)^t.
\]
When $\alpha\leq d=\beta_d$, we have $\alpha\leq \beta_t=\frac{t}{d+1-t}$. Hence, for a sufficiently large value of $C$, 
\[\bE[X-Y]\geq \left(\frac{e(G)}{4mn^{1-1/t}}\right)^t-n^{d+1-t}\left(\frac{2s}{m}\right)^t\geq (C/4)^t-(2s)^t m^{\alpha(d+1-t)-t}\geq (C/4)^t-(2s)^t\geq d^{d+1}s.\]
On the other hand, when $\alpha\geq d=\beta_d$, we have $$\bE[X-Y]\geq \Big(\frac{Cn}{4mn^{1-1/t}}\Big)^t-m\Big(\frac{2s}{m}\Big)^t=\big((C/4)^d-(2s)^d\big)\frac{n}{m^d}\geq (C/4)^d-(2s)^d\geq d^{d+1}s,$$ for a sufficiently large value of $C$. Thus, we indeed have $\bE[X-Y]\geq (C/4)^t-(2s)^t\geq d^{d+1}s$ in all cases and therefore this completes the first case of the proof.

Before we consider the rest of the cases, let us justify the assertion that when $\alpha=1$ one can take $C=8(s+d^3)$. Namely, if $\alpha=1$ we have $\alpha_t\leq \alpha\leq \beta_t$ for $t={\lceil{\frac{d+1}{2}}\rceil}$. Then, for the proof to go through, one needs to verify that the inequality $\bE[X-Y]\geq (C/4)^t-(2s)^t \geq sd^{d+1}$ holds with the proposed value of $C$, which is a consequence of a simple calculation.

Observe that the rest of the cases already follow by symmetry. Since $1/\alpha_t=\alpha_{d+2-t}$ and $1/\beta_{t-1}=\beta_{d+2-t}$, we have that if $\alpha\in [\beta_{t-1},\alpha_t]$ for some $t\in\{2,\dots,d\}$, then $1/\alpha\in [\alpha_{d+2-t},\beta_{d+2-t}]$. Also, if $\alpha\in [0,\beta_1]$, then $1/\alpha\in [\beta_d,\infty)$. Thus, if we reverse the roles of $m$ and $n$, together with the observation $m=n^{1/\alpha}$, we find ourselves in the setting of the cases already proved. This completes the proof of the theorem.
\end{proof}

Now we are ready to prove the upper bounds of Theorem~\ref{thm:upper bounds K_s,s symmetric} and Theorem~\ref{thm:main thm K_s,s}, which we restate as follows.

\begin{theorem}
Let $d, s\geq 2$ be integers, then there exists $C=C(d,s)>0$ such that the following holds for every $\alpha>0$ and field $\bF$. Let $\cP$ be a set of $m$ points and $\cH$ be a set of $n=m^\alpha$ hyperplanes in $\bF^d$ such that the incidence graph of $(\cP, \cH)$ is $K_{s, s}$-free. Then
$$I(\cP, \cH)\leq \begin{cases} Cm  &\mbox{ if } \alpha\in [0,\beta_1],\\
Cmn^{1-\frac{1}{t}} &\mbox{ if }\alpha\in [\alpha_t,\beta_t]\mbox{ for some }t\in \{2,\dots,d\},\\
Cm^{1-\frac{1}{d+2-t}}n&\mbox{ if }\alpha\in [\beta_{t-1},\alpha_t]\mbox{ for some }t\in \{2,\dots,d\},\\
Cn  &\mbox{ if } \alpha\in [\beta_d,\infty).\end{cases}$$
Moreover, if $\alpha=1$, the implied constant can be taken to be $C=8(s+d^3)$.
\end{theorem}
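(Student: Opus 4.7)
The statement is essentially a repackaging of the two main ingredients developed earlier in the section, so the plan is to combine them with minimal extra work. The incidence graph $G = G(\cP,\cH)$ is $K_{s,s}$-free by hypothesis. Provided $d\geq 3$, Lemma~\ref{lemma:forbiddengraphs} tells us that $G$ also avoids the pattern $\Pi_d$ as an induced substructure. Feeding $G$ into Lemma~\ref{lemma:main upper bounds} then yields exactly the case-by-case upper bound on $e(G) = I(\cP,\cH)$ claimed by the theorem, with the same constant $C = C(d,s)$, and the ``moreover'' statement for $\alpha = 1$ is transferred verbatim from that lemma.

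The only genuine loose ends are the boundary cases not directly covered by Lemma~\ref{lemma:main upper bounds}. First, the case $d = 2$ is not handled by Lemma~\ref{lemma:forbiddengraphs}, but here the incidence graph of points and lines in $\bF^2$ is $K_{2,2}$-free, so the K\H{o}v\'ari--S\'os--Tur\'an bound gives $I(\cP,\cH) = O(m\sqrt{n} + n) = O((mn)^{2/3} + m + n)$, which comfortably implies the claimed estimate in every regime of $\alpha$ (the exponents $1 - 1/t$ and $1-1/(d+2-t)$ for $d=2$ reduce to bounds that are dominated by $m\sqrt{n}+n\sqrt{m}$). Second, Lemma~\ref{lemma:main upper bounds} is stated only ``for every sufficiently large $n$''; for $n$ below the threshold, the trivial bound $I(\cP,\cH) \le mn$ is absorbed into the constant $C(d,s)$ in each case, since $n$ being bounded turns each of the four right-hand sides into at most a $(d,s)$-dependent multiple of $\max(m,n)$.

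Writing this out, the actual proof will be only a few lines: invoke Lemma~\ref{lemma:forbiddengraphs} to produce the forbidden pattern, invoke Lemma~\ref{lemma:main upper bounds} to read off the bound, and append a short paragraph dealing with $d=2$ via K\H{o}v\'ari--S\'os--Tur\'an and with small $n$ by adjusting the constant. There is no substantial obstacle, since all the work has been done in the preparatory lemmas; the ``main obstacle,'' if any, is purely cosmetic, namely checking that the constants from Lemma~\ref{lemma:main upper bounds} (including $8(s+d^3)$ for $\alpha = 1$) propagate unchanged, and that the trivial cases do not force a worse dependence on $d$ or $s$ than what is already permitted.
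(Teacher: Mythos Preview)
Your approach is essentially identical to the paper's: invoke Lemma~\ref{lemma:forbiddengraphs} to guarantee that $G(\cP,\cH)$ avoids $\Pi_d$, then apply Lemma~\ref{lemma:main upper bounds} to read off the bound (including the $\alpha=1$ constant). Your additional handling of the edge cases $d=2$ and small $n$ is more careful than the paper, which glosses over both; one minor slip is the equality $O(m\sqrt{n}+n)=O((mn)^{2/3}+m+n)$, which is false over general fields, but your actual argument only needs the K\H{o}v\'ari--S\'os--Tur\'an bound $O(\min(m\sqrt{n}+n,\,n\sqrt{m}+m))$, and that does suffice for all four $d=2$ regimes.
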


\begin{proof}
By Lemma~\ref{lemma:forbiddengraphs}, the graph $G(\cP, \cH)$ does not contain the pattern $\Pi_d$. Since the conclusion of Lemma~\ref{lemma:main upper bounds} does not hold, one of its assumptions must fail. Thus, we conclude that $G(\cP, \cH)$ either contains $K_{s, s}$ as a subgraph or $I(\cP, \cH)$ satisfies the required upper bound.
\end{proof}

\subsection{Large complete bipartite subgraph of dense incidence graphs}

The main goal of this section is to prove our lower bounds on $\rs_d(m, n, I)$ and $\rs_d(n, m, I)$ for $m\leq n$, that is, to prove Theorem~\ref{thm:main thm rs}. Recall that $\rs_d(n, m, I)=\min \rs(\cP, \cH)$, where the minimum is taken over all fields $\bF$, all sets  $\cP\subset \bF^d$ of at most $m$ points, all sets $\cH$ of at most $n$ hyperplanes in $\bF^d$, that satisfy $I(\cP, \cH)\geq I$.  As observed before, the lower bounds are only interesting in the regime $\eps>100 \max\{m^{-\frac{1}{d-1}}, n^{-\frac{1}{d}}\}$, otherwise taking the maximum degree vertex with its neighborhood gives a complete bipartite graph of the required size. Let us restate our theorem in this case.

\begin{theorem}\label{thm:main thm rs_upper}
Let $m\leq n$ be integers, $\eps>0$, and $I=\eps mn$. If $\eps>100 \max\{m^{-\frac{1}{d-1}}, n^{-\frac{1}{d}}\}$, then 
$$\rs_d(m, n, I),\rs_d(n, m, I)\geq \left(\frac{\eps}{100}\right)^{d-1} mn.$$
\end{theorem}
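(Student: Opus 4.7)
The plan is to extend the two-step approach of Lemma~\ref{lemma:rs with d=3} (the $d = 3$ case) to general $d$, relying on the $\Pi_d$-freeness of the incidence graph established in Lemma~\ref{lemma:forbiddengraphs}. Throughout, let $G = G(\cP, \cH)$ with parts $A = \cP$, $B = \cH$. I would first perform a degree cleanup (analogous to the hypothesis in Lemma~\ref{lemma:rs with d=3}), iteratively removing vertices of $A$ with degree below $\eps m/(10d)$ and of $B$ with degree below $\eps n/(10d)$; this loses at most $\eps mn/5$ edges while enforcing the matching minimum-degree bounds.

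Next, I would select $v_1, \ldots, v_{d-1} \in A$ forming a \emph{strict} chain of common neighborhoods $N(v_1) \supsetneq N(v_1, v_2) \supsetneq \cdots \supsetneq N(v_1, \ldots, v_{d-1}) = X$, with $|X|$ maximized over all such choices. An averaging/Jensen argument extending inequality~(\ref{eqn:average common nbrhd}) to $(d-1)$-tuples gives $|X| \geq \Omega_d(\eps^{d-1} m)$. (If no strict chain of length $d - 1$ exists, the greedy construction halts at some $k^* < d - 1$ with every $v \in A$ satisfying $N(v_1, \ldots, v_{k^*}) \subseteq N(v)$, producing a complete bipartite subgraph $A \times N(v_1, \ldots, v_{k^*})$ that already surpasses the target.)

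The key structural step, generalizing the $d = 3$ dichotomy, is: for every $v \in A$, either $X \subseteq N(v)$, or $|N(v) \cap X| \leq 1$. I would prove this by contradiction. Assume $v$ has at least two neighbors and one non-neighbor in $X$. Place $v$ in the role of $a_1$ and each $v_i$ in the role of $a_{d-i+2}$ in the pattern $\Pi_d$; then pick $b_1, b_2 \in N(v) \cap X$, $b_3 \in X \setminus N(v)$, and for $j = 4, \ldots, d$ pick $b_j \in N(v_1, \ldots, v_{d-j+2}) \setminus N(v_{d-j+3})$. These last sets are non-empty by the strict chain condition and lie in disjoint ``level sets'' of the chain, so the $b_j$'s are distinct. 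A direct check then shows every forced edge and non-edge of $\Pi_d$ is realized, contradicting Lemma~\ref{lemma:forbiddengraphs}.

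Given the dichotomy, I would finish via two edge-counting steps paralleling the $d = 3$ case. Set $V = \{v \in A : X \subseteq N(v)\}$. The minimum-degree condition on $B$ gives $e(A, X) \geq \Omega_d(\eps n) |X|$, while the dichotomy yields $e(A \setminus V, X) \leq n$; using $\eps \geq 100 m^{-1/d}$ to ensure the first term dominates, one gets $|V| \geq \Omega_d(\eps n)$. For the refinement of $|X|$, the maximality of $|X|$ implies every $u \in B \setminus X$ has at most $d - 2$ neighbors in $V$, since otherwise $u$ would witness a $(d-1)$-tuple of vertices in $V$ whose common neighborhood properly contains $X$. The corresponding edge count from $V$ together with $\eps \geq 100 n^{-1/(d-1)}$ gives $|X| \geq \Omega_d(\eps m)$, so $|V| \cdot |X| \geq (\eps/100)^{d-1} mn$ after tracking constants. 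The main obstacle is the dichotomy step: one must simultaneously realize all $\Pi_d$ adjacencies and non-adjacencies, and the strict chain is precisely what makes the $d - 3$ auxiliary non-edges $a_i b_{i+2}$ (for $i \geq 2$) realizable via vertices drawn from successive chain differences outside $X$.
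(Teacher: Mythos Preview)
The dichotomy step and the first edge-count are essentially correct (your explicit embedding of $\Pi_d$ has an off-by-one in the indexing $a_{d-i+2}$, but the underlying claim is exactly Lemma~\ref{lemma:good tuples implies Pi_d} applied to the good $d$-tuple $(v_1,\dots,v_{d-1},v)$). The real problem is your final ``refinement of $|X|$'' step.

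You assert that every $u\in B\setminus X$ has at most $d-2$ neighbours in $V$, because $d-1$ such neighbours $w_1,\dots,w_{d-1}$ would have $N(w_1,\dots,w_{d-1})\supseteq X\cup\{u\}\supsetneq X$, contradicting maximality. But your maximum is taken over \emph{strict chains}, and nothing forces $w_1,\dots,w_{d-1}$ to form one: for all you know every $w_i$ has the identical neighbourhood $X\cup\{u\}$, and then no strict $(d-1)$-chain can be extracted from them at all, so there is no contradiction. In the $d=3$ case this issue does not arise because the maximum is over pairs, and a $2$-tuple has no strict-chain condition. Worse, if your argument did go through it would give $|V|\cdot|X|\ge\Omega_d(\eps n)\cdot\Omega_d(\eps m)=\Omega_d(\eps^2 mn)$, which for $d\ge 4$ is strictly stronger than $\eps^{d-1}mn$; the constructions in Section~\ref{sec:constructions} show $\Theta_d(\eps^{d-1}mn)$ is tight, so the bound you are aiming for here is simply false.

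The paper resolves this tension differently. Rather than bound degrees from $B\setminus X$ into $V$ directly, it restricts to $G'=G[V_1,\,B\setminus X]$ and argues that $G'$ contains no good $(d-1)$-tuple: a good $(d-1)$-tuple $u_1,\dots,u_{d-1}$ in $G'$ would lift to one in $G$ with common neighbourhood strictly containing $X$, and crucially the strict-chain condition \emph{is} inherited here because $X$ is added uniformly to every level $N_G(u_1,\dots,u_i)=N_{G'}(u_1,\dots,u_i)\cup X$. One then applies the theorem inductively in dimension $d-1$ to $G'$. This induction is exactly what produces the correct $\eps^{d-1}$ exponent, losing one factor of $\eps$ per dimension.
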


\noindent
Let us prepare the proof with some definitions. For a bipartite graph $G$, we denote by $\rs(G)$ the number of edges in the largest complete bipartite subgraph of $G=(A, B; E)$. Also, we say that a $t$-tuple of vertices $(v_1, \dots, v_t)\in A^t$ is a \textit{good $t$-tuple} if $N(v_1, v_2)\supsetneq N(v_1, v_2, v_3)\supsetneq \cdots \supsetneq N(v_1, \dots, v_t)$ and $|N(v_1, \dots, v_t)|\geq 2$. The reason this definition is useful is the following lemma, which allows us to embed the pattern $\Pi_d$ using good $d$-tuples. One can think of Lemma~\ref{lemma:good tuples implies Pi_d} as a simpler version of Lemma~\ref{lemma:auxiliary upper bounds} from Section~\ref{sec:upper bounds K_{s, s}}.

\begin{lemma}\label{lemma:good tuples implies Pi_d}
Suppose that $G$ contains a good $d$-tuple $(v_1, \dots, v_d)$. Then $G$ contains the pattern $\Pi_d$.
\end{lemma}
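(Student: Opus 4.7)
The plan is to construct $u_1, \dots, u_d \in B$ explicitly so that, together with $v_1, \dots, v_d$, they induce a subgraph in $\cF_{\Pi_d}$. From the strict chain in the good-tuple hypothesis, for each $i = 1, \dots, d-2$ one can pick a vertex $u_i \in N(v_1, \dots, v_{i+1}) \setminus N(v_{i+2})$, which exists precisely because of the strict containment $N(v_1, \dots, v_{i+1}) \supsetneq N(v_1, \dots, v_{i+2})$. The remaining two vertices $u_{d-1}, u_d$ are any two distinct elements of $N(v_1, \dots, v_d)$, which exist by the second clause of the hypothesis. For any $i < i'$, one has $u_{i'} \in N(v_{i+2})$ (the vertex $v_{i+2}$ lies among $v_1, \dots, v_{i'+1}$ when $i' \leq d-2$, and among $v_1, \dots, v_d$ when $i' \in \{d-1, d\}$), while $u_i \notin N(v_{i+2})$ by construction, so the $u_i$'s are pairwise distinct.

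The crucial step is to match the induced subgraph to $\Pi_d$ via the reverse relabeling $v_j \mapsto a_{d+1-j}$, $u_i \mapsto b_{d+1-i}$. Under this correspondence, the pattern's label-$1$ condition $\ell \leq k+1$ on an edge $a_k b_\ell$ translates to $j \leq i+1$ on the corresponding pair $v_j u_i$, and the label-$0$ condition $\ell = k+2$ translates to $j = i+2$. For each $i \leq d-2$, the construction directly delivers these: $u_i \sim v_j$ holds for $j \leq i+1$ and fails for $j = i+2$, while edges with $j \geq i+3$ carry label $*$ and impose no constraint. For $i \in \{d-1, d\}$, the label-$0$ constraint is vacuous (it would require $j = i+2 > d$), and the label-$1$ constraint asks $u_i \sim v_j$ for every $j \in \{1, \dots, d\}$, which holds since $u_i \in N(v_1, \dots, v_d)$.

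The only subtle point is the reversal of indices: the hypothesis produces $u_i$'s adjacent to small-indexed $v_j$'s, whereas $\Pi_d$ as written has its $b_\ell$'s adjacent to large-indexed $a_k$'s. Once one observes that $\Pi_d$ is self-dual under reflection through the anti-diagonal $(a_i, b_j) \leftrightarrow (b_{d+1-i}, a_{d+1-j})$, this asymmetry disappears, and the verification reduces to routine bookkeeping with no real obstacle.
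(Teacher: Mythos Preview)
Your proof is correct and follows the same construction as the paper: pick $u_i \in N(v_1,\dots,v_{i+1})\setminus N(v_{i+2})$ for $i\le d-2$ and two distinct $u_{d-1},u_d\in N(v_1,\dots,v_d)$, then check these together with $v_1,\dots,v_d$ realise $\Pi_d$. The paper states this in one line without specifying the labelling; you supply the bookkeeping. One small remark: your reversed assignment $v_j\mapsto a_{d+1-j}$, $u_i\mapsto b_{d+1-i}$ is valid, but the more direct choice $v_j\mapsto b_j$, $u_i\mapsto a_i$ already matches the pattern without invoking the self-duality of~$\Pi_d$, so the reflection step, while correct, is not strictly needed.
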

\begin{proof}
Picking $u_i\in N(v_1, \dots,  v_{i+1})\backslash N(v_1, \dots, v_{i+2})$ for $i\leq d-2$ and $u_{d-1}, u_d\in N(v_1, \dots, v_d)$, the vertices $v_1,\dots,v_d,u_1,\dots,u_d$ induce a subgraph with pattern $\Pi_d$.
\end{proof}

Let us reformulate Theorem \ref{thm:main thm rs_upper} in terms of good $d$-tuples.

\begin{lemma}\label{lemma:asymmetric upper bounds graphs eps}
Let $d\geq 2$ be a positive integer and let $G=(A, B; E)$ be a bipartite graph with at least $\eps mn$ edges, where $|A|=m$, $|B|=n$ and $n\geq m$. If $\eps\geq 100\max\{m^{-\frac{1}{d-1}}, n^{-\frac{1}{d}}\}$, then either $A$ contains a good $d$-tuple or $$\rs(G)\geq \left(\frac{\eps}{100}\right)^{d-1} mn.$$
\end{lemma}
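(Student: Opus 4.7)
I would follow the strategy of Lemma~\ref{lemma:rs with d=3} and extend it to general $d$ by a greedy construction of a good $d$-tuple. First, clean $G$ to a subgraph with min-degree $\Omega(\eps m)$ in $A$ and $\Omega(\eps n)$ in $B$, losing only a constant factor in the edge count; this replaces the technical min-degree assumption of Lemma~\ref{lemma:rs with d=3}. Then pick $v_1,v_2\in A$ maximizing $|X_2| := |N(v_1,v_2)|$; by convexity applied to the cherry count (as in the proof of Lemma~\ref{lemma:rs with d=3}), $|X_2| \ge \Omega(\eps^2 m)$.

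Next, iteratively extend: for $k=3,\dots,d$, attempt to find $v_k\in A$ with $2\le |N(v_k)\cap X_{k-1}|\le |X_{k-1}|-1$, and among such candidates pick one maximizing $|N(v_k)\cap X_{k-1}|=:|X_k|$. If the extension succeeds up to $k=d$, we output the good $d$-tuple. Otherwise, let $k^*$ be the first failed step; then every $v\in A$ either has $X_{k^*-1}\subseteq N(v)$ (call this set $V_1$) or $|N(v)\cap X_{k^*-1}|\le 1$ (call this set $V_2$), so $v_1,\dots,v_{k^*-1}\in V_1$ and $V_1\cup V_2=A$. Using the min-degree condition on $X_{k^*-1}$, the standard double edge count
\[ \tfrac{\eps n}{c}\cdot |X_{k^*-1}| \;\le\; e(A,X_{k^*-1}) \;\le\; |V_1|\cdot|X_{k^*-1}| + |V_2| \]
yields $|V_1|\ge \Omega(\eps n)$, provided $|X_{k^*-1}|$ is not too small. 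We then read off the complete bipartite subgraph $K_{V_1,X_{k^*-1}}$ of $G$ with $|V_1|\cdot|X_{k^*-1}|$ edges.

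To match the target $(\eps/100)^{d-1}mn$, the remaining and main task is to show $|X_{k^*-1}|\ge \Omega(\eps^{d-2} m)$. The key insight is to repeatedly use the maximality of $|X_j|$ at each intermediate greedy step $j$: as in the improvement trick of Lemma~\ref{lemma:rs with d=3}, the maximality forces any two vertices in $V_1$ to have common neighborhood exactly $X_{k^*-1}$, hence at most $m$ edges from $V_1$ to $B\setminus X_{k^*-1}$; combined with the min-degree on $V_1$ this bootstraps a lower bound on $|X_{k^*-1}|$. Iterating this bootstrap along the greedy chain (once per step $j\in\{2,\dots,k^*-1\}$) propagates the required lower bound through the construction. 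Combining the two estimates, $|V_1|\cdot |X_{k^*-1}|\ge \Omega(\eps^{d-1} mn)$, and the hypothesis $\eps\ge 100\max\{n^{-1/(d-1)},m^{-1/d}\}$ is used to absorb all the $d$-dependent constants into $(\eps/100)^{d-1}$.

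\emph{Main obstacle.} The hardest part is the bootstrap lower bound on $|X_{k^*-1}|$ when $k^*\ge 4$, and in particular when $k^*=d$: for $d=3$ a single application of the max-$|X_2|$ trick suffices, but for larger $d$ one must track the maximality at every intermediate step of the greedy chain and ensure that $|X_k|$ never drops by more than a controlled factor. Tuning the greedy selection rule (e.g., always maximizing $|N(v_k)\cap X_{k-1}|$, and possibly enforcing a threshold $|X_k|\ge \lambda |X_{k-1}|$ so that Type-III vertices failing the threshold can also be absorbed into the edge-count side) so that the accumulated loss stays within $(\eps/100)^{d-1}$ is the main technical point that the hypothesis on $\eps$ is designed to support.
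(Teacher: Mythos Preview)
Your plan has a genuine gap at the bootstrap step. The claim ``maximality forces any two vertices in $V_1$ to have common neighbourhood exactly $X_{k^*-1}$'' is simply false once $k^*\ge 4$. Indeed $v_1,v_2\in V_1$ (they both contain $X_{k^*-1}$ in their neighbourhood), yet $N(v_1,v_2)=X_2\supsetneq X_{k^*-1}$; so vertices in $B\setminus X_{k^*-1}$ can have many neighbours in $V_1$, and the inequality $e(V_1,B)\le |V_1||X_{k^*-1}|+m$ that you need does not hold. The improvement trick of Lemma~\ref{lemma:rs with d=3} works precisely because there $X$ is the \emph{maximum pairwise} common neighbourhood; with the greedy chain for $k^*\ge 4$ you only control the maximum \emph{along the chain}, which is much weaker. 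Consequently, your greedy construction with the threshold variant only guarantees $|X_{k^*-1}|\ge \Omega(\eps^{k^*-1}m)$ and hence $|V_1|\cdot|X_{k^*-1}|\ge \Omega(\eps^{k^*}mn)$, which at $k^*=d$ is off by one factor of $\eps$ from the target $(\eps/100)^{d-1}mn$. You correctly flag this as the main obstacle, but the proposal does not contain an argument that closes the gap.

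The paper resolves this differently: instead of building the tuple one vertex at a time, it fixes a good $(d-1)$-tuple with \emph{globally maximal} common neighbourhood $X$ (existence via Lemma~\ref{lemma:good d-1 tuples}), gets $V_1$ and $|V_1|\ge\Omega(\eps n)$ as you do, and then passes to $G'=G[V_1\cup(B\setminus X)]$. The key observation is that $G'$ contains \emph{no} good $(d-1)$-tuple in $V_1$: any such tuple would have $X$ plus at least two extra vertices in its common neighbourhood in $G$, contradicting the maximality of $X$. One then applies the lemma for $d-1$ to $G'$ (this is an induction on $d$), obtaining $\rs(G')\ge(\eps'/100)^{d-2}|V_1||B\setminus X|$, which after bookkeeping gives $(\eps/100)^{d-1}mn$. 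So the missing idea in your plan is to replace the attempted direct bootstrap on $|X_{k^*-1}|$ by an inductive call on the restricted graph $G[V_1,B\setminus X]$, where the maximality of the \emph{whole} $(d-1)$-tuple (not of individual greedy steps) is what forbids good $(d-1)$-tuples and lets the induction go through. The min-degree reduction is also handled inductively (on $m+n$) rather than by a one-shot cleaning, to keep the constants uniform.
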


We prove this lemma after some preparations. The following lemma shows how to build good $t$-tuples under the assumption that $G$ does not contain large complete bipartite subgraphs.

\begin{lemma}\label{lemma:one step embedding}
Let $\gamma,\eps>0$, and let $G=(A,B; E)$ be a bipartite graph such that $|A|=m$, $|B|=n$, and every vertex $u\in B$ has degree at least $\deg u\geq \frac{\eps}{3} n$. If $\rs(G)<\frac{\eps}{6} \gamma \cdot mn$, then for any set $X\subseteq B$ of size at least $\gamma n$, there exists a vertex $v\in A$ for which $|N(v)\cap X|\geq \frac{\eps}{6}|X|$ and $N(v)\cap X\neq X$. 
\end{lemma}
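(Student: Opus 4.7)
The plan is a two-step double counting argument. First I would control the number of edges between $A$ and $X$ from below using the minimum degree hypothesis: since every $u\in X\subseteq B$ has $\deg u\geq \tfrac{\eps}{3}n$, we get
\[
e(A,X)\geq \frac{\eps}{3}\,n|X|.
\]

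Next, let $A_1:=\{v\in A:|N(v)\cap X|\geq \tfrac{\eps}{6}|X|\}$. The vertices outside $A_1$ contribute to $e(A,X)$ at most $n\cdot \tfrac{\eps}{6}|X|=\tfrac{\eps}{6}n|X|$, so $e(A_1,X)\geq \tfrac{\eps}{6}n|X|$. On the other hand $e(A_1,X)\leq |A_1|\cdot|X|$, which yields $|A_1|\geq \tfrac{\eps}{6}n$. This already produces many candidate vertices $v$ with large intersection with $X$.

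To finish, I would rule out the degenerate case where \emph{every} such $v$ fully contains $X$ in its neighbourhood. If $N(v)\supseteq X$ for all $v\in A_1$, then $A_1\times X$ spans a complete bipartite subgraph of $G$, giving
\[
\rs(G)\geq |A_1|\cdot |X|\geq \frac{\eps}{6}n\cdot \gamma m=\frac{\eps\gamma}{6}\,mn,
\]
which directly contradicts the hypothesis $\rs(G)<\tfrac{\eps\gamma}{6}mn$. Hence there must exist $v\in A_1$ with $N(v)\cap X\neq X$, and this $v$ satisfies both required properties.

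There is no real obstacle here; the only thing to be careful about is bookkeeping the constants so that the slack between $\tfrac{\eps}{3}$ (from the degree assumption) and $\tfrac{\eps}{6}$ (the threshold defining $A_1$) is exactly what allows one to conclude $|A_1|\geq \tfrac{\eps}{6}n$, which is in turn precisely the bound that, combined with $|X|\geq \gamma m$, matches the forbidden $\rs(G)$ threshold $\tfrac{\eps\gamma}{6}mn$.
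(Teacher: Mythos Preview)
Your proof is correct and follows essentially the same approach as the paper: both arguments double count $e(A,X)$ using the minimum degree lower bound, split $A$ according to whether $|N(v)\cap X|$ is large or small, and derive a contradiction from the $\rs(G)$ hypothesis via the complete bipartite graph between $X$ and the ``full containment'' vertices. The only difference is cosmetic ordering: the paper assumes for contradiction that every vertex lies in $V_1\cup V_2$ and bounds $e(V_1,X)\leq \rs(G)$ directly, while you first extract $|A_1|\geq \tfrac{\eps}{6}n$ and then contradict $\rs(G)$; the underlying inequalities are identical.
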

\begin{proof}
Let $X\subseteq B$ be a subset of size at least $\gamma n$. The idea of this proof is to double count the edges incident to $X$. Let $V_1$ denote the set of vertices $v\in A$ for which $X\subseteq N(v)$ and let $V_2$ denote the set of vertices $v\in A$ with $|N(v)\cap X|< \frac{\eps}{6}|X|$. To show the lemma, it suffices to find a vertex $v\in A$ belonging to neither $V_1$ nor $V_2$. 

Suppose, for the sake of contradiction, that $V_1\cup V_2=A$. Then, $e(A, X)=e(V_1, X)+e(V_2, X)$. Since the degree of every vertex in $B$ is at least $\frac{\eps m}{3}$, we have $e(A, X)\geq \frac{\eps}{3}m|X|$. On the other hand, since the graph induced on $V_1\cup X$ is complete bipartite, we have $e(V_1, X)\leq \rs(G)\leq \frac{\eps}{6}\gamma\cdot mn$. Also, by the definition of $V_2$, we have $e(V_2, X)=\sum_{v\in V_2} |N(v)\cap X|<|V_2|\cdot \frac{\eps}{6}|X|\leq \frac{\eps}{6}m|X|$. Hence,
$$\frac{\eps}{6}\gamma\cdot mn+\frac{\eps}{6}m|X|> \frac{\eps}{3} m|X|.$$
Rearranging this inequality gives $|X|< \gamma n$, which is a contradiction to our initial assumption.
\end{proof}

Now, we explain how to build good $(d-1)$-tuples in the graph $G$.

\begin{lemma}\label{lemma:good d-1 tuples}
Let $d\geq 3$ be a positive integer and let $G=(A,B; E)$ be a bipartite graph with $\eps mn$ edges, where $|A|=m$, $|B|=n$. If $\rs(G)\leq (\frac{\eps}{6})^{d-1}mn$, then $G$ contains a good $(d-1)$-tuple $(v_1, \dots, v_{d-1})\in A^{d-1}$ with common neighbourhood of size at least $|N(v_1, \dots, v_{d-1})|\geq 2\left(\frac{\eps}{6}\right)^{d-1} n$.
\end{lemma}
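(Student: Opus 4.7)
I would construct the good $(d-1)$-tuple greedily, adding one vertex at a time by repeatedly invoking Lemma~\ref{lemma:one step embedding}. Here is the step-by-step approach.

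\emph{Step 1 (Degree regularization).} First delete from $B$ every vertex of degree less than $\frac{\eps}{3} n$. This discards at most $\frac{\eps}{3} mn$ edges, so the resulting subgraph $G' = (A, B'; E')$ has $|B'| \leq m$, retains at least $\frac{2\eps}{3} mn$ edges, and has $\deg_{G'}(u) \geq \frac{\eps}{3} n$ for every $u \in B'$, putting us in position to apply Lemma~\ref{lemma:one step embedding} to $G'$. Since $G'$ is a subgraph of $G$, we also have $\rs(G') \leq \rs(G) \leq (\eps/6)^{d-1}mn$.

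\emph{Step 2 (Base pair).} A Cauchy--Schwarz / convexity computation analogous to \eqref{eqn:average common nbrhd} yields
\begin{align*}
\sum_{\{v_1,v_2\} \subseteq A} \bigl|N_{G'}(v_1) \cap N_{G'}(v_2)\bigr| \;=\; \sum_{u \in B'} \binom{\deg_{G'}(u)}{2} \;=\; \Omega(\eps^2 m n^2),
\end{align*}
so (for $n$ large enough to absorb lower-order terms) some pair $v_1, v_2 \in A$ satisfies $|N_{G'}(v_1) \cap N_{G'}(v_2)| \geq 12(\eps/6)^2 m$. Set $X_2 := N_{G'}(v_1) \cap N_{G'}(v_2)$.

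\emph{Step 3 (Greedy extension).} For $i$ running from $2$ up to $d-2$ I would maintain the invariant $|X_i| \geq 12 (\eps/6)^i m$ with $X_i := N_{G'}(v_1, \ldots, v_i)$, extending the tuple by applying Lemma~\ref{lemma:one step embedding} to $G'$ with the set $X = X_i$ and parameter $\gamma = |X_i|/|B'|$. The required hypothesis $\rs(G') < (\eps/6)\gamma |B'| n$ is equivalent to $|X_i| > 6 \rs(G')/(\eps n)$, which is satisfied because
\begin{align*}
6 \rs(G')/(\eps n) \;\leq\; 6(\eps/6)^{d-1}m/\eps \;=\; (\eps/6)^{d-2} m \;\leq\; \tfrac{1}{12}|X_i|.
\end{align*}
The lemma then produces $v_{i+1} \in A$ with $|N(v_{i+1}) \cap X_i| \geq (\eps/6)|X_i|$ and $N(v_{i+1}) \cap X_i \subsetneq X_i$; setting $X_{i+1} := X_i \cap N(v_{i+1})$ preserves the invariant $|X_{i+1}| \geq 12(\eps/6)^{i+1} m$ while enforcing the strict containment $X_i \supsetneq X_{i+1}$. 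Moreover $v_{i+1}$ is automatically distinct from $v_1, \ldots, v_i$, since each of the earlier vertices contains all of $X_i$ in its neighborhood, whereas $v_{i+1}$ does not.

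\emph{Step 4 (Conclude).} After $d-3$ iterations the tuple $(v_1, \ldots, v_{d-1})$ exhibits the strictly decreasing chain $N(v_1, v_2) \supsetneq N(v_1, v_2, v_3) \supsetneq \cdots \supsetneq N(v_1, \ldots, v_{d-1})$, and the final common neighborhood has size $\geq 12(\eps/6)^{d-1}m \geq 2(\eps/6)^{d-1}m$, giving the required good $(d-1)$-tuple. (When $d = 3$ the iteration is vacuous and Step~2 alone suffices.)

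\emph{Main obstacle.} The delicate point is balancing the $(\eps/6)$-factor lost per iteration against the hypothesis $\rs(G) \leq (\eps/6)^{d-1}mn$. Starting the iteration at scale $\Theta(\eps^2 m)$, rather than the naive $\Theta(\eps m)$ coming from a single vertex, gives exactly the one extra $(\eps/6)$-factor of headroom needed to survive all $d-3$ shrinking steps while keeping $|X_i|$ above the threshold $(\eps/6)^{d-2}m$ at which Lemma~\ref{lemma:one step embedding} can still be invoked.
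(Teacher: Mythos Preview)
Your approach is essentially the paper's: obtain a base pair $v_1,v_2$ with large common neighbourhood via the convexity computation~\eqref{eqn:average common nbrhd}, then repeatedly apply Lemma~\ref{lemma:one step embedding} to append one vertex at a time while maintaining a size invariant and the strict-containment chain. The paper runs this as an induction on $t$ with the slightly weaker invariant $|N(v_1,\dots,v_t)|\ge 2(\eps/6)^t m$ and omits your Step~1 regularisation, applying Lemma~\ref{lemma:one step embedding} directly to $G$ (tacitly relying on the minimum-degree reduction already carried out in the enclosing proof of Lemma~\ref{lemma:asymmetric upper bounds graphs eps}).
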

\begin{proof}
For $t=2,\dots,d-1$, we prove that there is a good $t$-tuple $(v_1,\dots,v_t)\in A^t$ with $|N(v_1, \dots, v_{t})|\geq 2\left(\frac{\eps}{6}\right)^{t} n$. We prove this by induction on $t$. For $t=2$, we simply need to find a pair of vertices $v_1, v_2\in A$ whose common neighbourhood has size at least $2\left(\frac{\eps}{6}\right)^{2} n$. Since the average degree of vertices in $B$ is at least $\eps m$, the average size of the common neighbourhood of two vertices of $A$ is $n\binom{\eps m}{2}/\binom{m}{2}\geq \frac{\eps^2}{2}n$, which is sufficient.

For $t>2$, the induction hypothesis implies that $G$ contains a good $(t-1)$-tuple $(v_1, \dots, v_{t-1})\in A^{t-1}$ with the common neighbourhood of size $|N(v_1, \dots, v_{t-1})|\geq 2\left(\frac{\eps}{6}\right)^{t-1} n$. Let $\gamma=(\frac{\eps}{6})^{t-1}$, then $\rs(G)\leq (\frac{\eps}{6})^{d-1}mn\leq \frac{\eps}{6}\gamma\cdot mn$ and $|X|\geq \gamma n$ are satisfied, so we can apply Lemma~\ref{lemma:one step embedding} to $X=N(v_1, \dots, v_{t-1})$. We get that there exists a vertex $v_{t}\in A$ for which $|N(v_1, \dots, v_{t})|\geq \frac{\eps}{6} |X|\geq (\frac{\eps}{6})^t n$ and $N(v_1, \dots, v_{t-1})\supsetneq N(v_1, \dots, v_{t})$. We conclude that $(v_1, \dots, v_{t})$ is a good $t$-tuple with sufficiently large common neighbourhood, completing the proof.
\end{proof}

Now, we are ready to prove Lemma~\ref{lemma:asymmetric upper bounds graphs eps}.

\begin{proof}[Proof of the Lemma~\ref{lemma:asymmetric upper bounds graphs eps}.]
Assume that the statement is false, and let $G$ be a counterexample with $d$ minimal, and among those $\min\{m,n\}$ is also minimal. That is, $G$ has at least $\eps mn$ edges, $G$ contains no good $d$-tuple and $\rs(G)<(\frac{\eps}{100})^{d-1}mn$. Clearly, we have $\min\{m,n\}\geq 2$. Also, if $d=2$, finding a good $d$-tuple corresponds to finding a pair of vertices of $A$ with at least two common neighbours, i.e. finding a cycle of length $4$ in $G$. Since $|E(G)|\geq \eps mn\geq 100\max\{n^{-1/2}, m^{-1}\}mn\geq mn^{1/2}+n$, a simple application of the classical K\H{o}v\'ari-S\'os-Tur\'an theorem shows that $G$ must have a cycle of length $4$, which suffices. Hence, we may assume that $d\geq 3$.

First, we show that $\min_{v\in A} \deg v\geq \left(1-\frac{2}{d}\right)\eps|B|$ and $\min_{v\in B} \deg v\geq \left(1-\frac{2}{d}\right)\eps|A|$. Since $A$ and $B$ are symmetric, we only discuss the case when $G$ has a vertex $v\in A$ with degree $\deg v\leq \left(1-\frac{2}{d}\right)\eps|B|$. Define the graph $G'=G-\{v\}$, which has parts of size $m'=m-1$ and $n'=n$. Furthermore, $G'$ has $\eps'm'n'$ edges, where $\eps'm'n'\geq \eps mn-\left(1-\frac{2}{d}\right)\eps n$. Thus, as $G'$ does not contain a good $d$-tuple and by the minimality of $G$, we conclude
\begin{align*}
\rs(G)&\geq \rs(G')\geq \left(\frac{\eps'}{100}\right)^{d-1}m'n'\geq \left(\frac{\eps mn-\left(1-\frac{2}{d}\right)\eps n}{100(m-1)n}\right)^{d-1}(m-1)n \\
&= \left(\Big(\frac{\eps}{100}\Big)^{d-1}mn\right)\cdot \frac{m-1}{m}\left(\frac{m-(1-\frac{2}{d})}{m-1}\right)^{d-1}\geq \Big(\frac{\eps}{100}\Big)^{d-1}mn.
\end{align*}
The last inequality is true due to simple calculations. We should also verify that $G'$ satisfies the conditions of Lemma~\ref{lemma:asymmetric upper bounds graphs eps} by showing that $\eps'^{d-1}m'\geq 100^{d-1}$ and $\eps'^d n'\geq 100^d$. But this also follows, since we have already shown that $\eps'^{d-1}m'n'\geq \eps^{d-1}mn$, which implies both $\eps'^{d-1}m'\geq\eps^{d-1} m\geq 100^{d-1}$ and $\eps'^{d}n'\geq \eps^{d}n\geq 100^d$ (since $\eps'^{d-1}m'n'\geq \eps^{d-1}mn$ also implies $\eps'\geq \eps$). We conclude that $\rs(G)\geq (\eps/100)^{d-1}mn$, a contradiction.

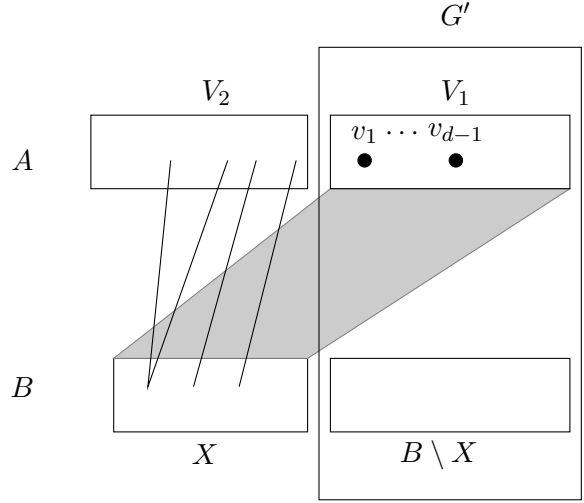
\begin{figure}
    \begin{center}
	\begin{tikzpicture}[scale=1.5]
            \node at (-2.8,1) {$A$};  \node at (-2.8,-1) {$B$}; 
            \draw (-2,-1.4) rectangle (-0.3,-0.75) ; \node at (-1.2,-1.6) {$X$};
            \draw (-0.1,-1.4) rectangle (2,-0.75) ;  \node at (0.85,-1.6) {$B\setminus X$};

             \node[vertex,label=above:$v_1$] (v1) at (0.2,1) {};
            \node[label=above:$\cdots$] (d) at (0.55,1) {};
            \node[vertex,label=above:$v_{d-1}$] (v2) at (1,1) {};

             \draw[draw=gray,fill=white!80!black]  (-0.1,0.75) --  (-2, -0.75) -- (-0.3,-0.75) -- (2,0.75) -- (-0.1,0.75) ;

            \draw (-1.5,1) -- (-1.7, -1) -- (-1, 1) ;
            \draw (-0.4,1) -- (-0.9,-1) ; \draw (-0.75,1) -- (-1.3,-1) ; 

            \draw (-2.2, 1.4) rectangle (-0.3, 0.75) ; \node at (-1.1,1.6) {$V_2$};
            \draw (-0.1, 1.4) rectangle (2,0.75) ; \node at (1,1.6) {$V_1$};
            \draw (-0.2, -2) rectangle (2.1, 2) ; \node at (1, 2.3) {$G'$};

        \end{tikzpicture}
    \end{center}
    \caption{An illustration for the proof of Lemma~\ref{lemma:asymmetric upper bounds graphs eps}}
    \label{fig2}
\end{figure}

In what follows, we assume that $G$ contains no vertex $v\in A$ of degree less than $\left(1-\frac{2}{d}\right)\eps n$ and no vertex $v\in B$ of degree less than $\left(1-\frac{2}{d}\right)\eps m$. We choose a good $(d-1)$-tuple $(v_1, \dots, v_{d-1})$ with the maximal size of the common neighbourhood, which we denote by $X=N(v_1, \dots, v_{d-1})$. By Lemma~\ref{lemma:good d-1 tuples}, there must be a good $(d-1)$-tuple with $|X|\geq \left(\frac{\eps}{6}\right)^{d-1} |B|$. If $A$ does not contain a good $d$-tuple in $G$, then any vertex $v\in A$ either has $X\subseteq N(v)$ or $|N(v)\cap X|=1$. Lemma~\ref{lemma:one step embedding} applied with $\gamma=(\frac{\eps}{6})^{d-2}$ shows that $|X|\leq \left(\frac{\eps}{6}\right)^{d-2} |B|$. We conclude that one can partition $A$ into two sets, $V_1$ and $V_2$, where $V_1=\{v\in A:X\subseteq N(v)\}$ and $V_2=\{v\in A:|X\cap N(v)|\leq 1\}$. 

Let us consider the edges incident to $X$. By our assumption on the minimum degree of vertices of $X$, we have $e(X, A)\geq |X|\cdot \left(1-\frac{2}{d}\right)\eps m\geq \frac{\eps}{3} m |X|$. On the other hand, \[\frac{\eps }{3}m|X|\leq e(X, A)=e(X, V_1)+e(X, V_2)\leq |X||V_1|+|V_2|\leq |X||V_1|+m.\] Since $|X|\geq \left(\frac{\eps}{6}\right)^{d-1} n$ and $(\frac{\eps}{100})^{d}n\geq 1$, we have $|X|\geq \frac{12}{\eps}$. Thus,  $\frac{\eps}{12} m|X|\geq m$ and so $\frac{\eps}{4} m |X|\leq |X||V_1|$, showing that $|V_1|\geq \frac{\eps}{4} m$.

Let us now consider the bipartite graph $G'=G[V_1\cup (B\backslash X)]$. The number of edges of this graph is at least
\begin{align*}
    e(G')&=e(V_1, B)-e(V_1, X)\geq |V_1|\cdot \left(1-\frac{2}{d}\right)\eps|B|-|V_1||X|\\
    &\geq |V_1|\left(\frac{d-2}{d}\eps|B|-\left(\frac{\eps}{6}\right)^{d-2}|B|\right)
    \geq \left(1-\frac{5/2}{d}\right)\eps|B||V_1|,
\end{align*}
since $6^{d-2}\geq 2d$ for all $d\geq 3$. Thus, the sizes of the vertex classes of $G'$ are $m'=|V_1|\geq \frac{\eps m}{4}$ and $n'=|B\backslash X|\geq n-(\frac{\eps}{6})^{d-2}n\geq \frac{5}{6}n$. Finally, the above computation shows that $e(G')=\eps'm'n'$, where $\eps'\geq (1-\frac{5/2}{d})\eps $.

Next, we argue that $G'$ does not have a good $(d-1)$-tuple. Suppose for contradiction that $G'$ contains a good $(d-1)$-tuple $(u_1, \dots, u_{d-1})$. The vertices $(u_1, \dots, u_{d-1})$ form a good $(d-1)$-tuple in the graph $G$ as well, and their common neighbourhood contains both $X$ and at least two vertices from $B\backslash X$ (since this is a good $(d-1)$-tuple in $G'$). But this implies $u_1, \dots, u_{d-1}$ is a $(d-1)$-tuple with a larger common neighbourhood in $G$ than $v_1, \dots v_{d-1}$, contradicting the maximality. Thus, we conclude that $G'$ cannot contain a good $(d-1)$-tuple.

Furthermore, we show that $\eps'^{d-2}m'\geq 100^{d-2}$ and $\eps'^{d-1} n'\geq 100^{d-1}$. This is a consequence of an easy computation since $$\eps'^{d-2}m'\geq \left(1-\frac{5/2}{d}\right)^{d-2}\eps^{d-2}\frac{\eps m}{4}\geq \frac{e^{-5/2}}{4} 100^{d-1}\geq 100^{d-2}.$$ 
In the above calculation, we used that $\eps^{d-1}n\geq 100^{d-1}$ and that $\big(1-\frac{5/2}{d}\big)^{d-2}\geq e^{-5/2}$, which holds since $\big(1-\frac{5/2}{d}\big)^{d-2}$ is a decreasing function for $d\geq 3$ with the limit $e^{-5/2}$ when $d\to \infty$. A similar computation shows that $\eps'^{d-1} n'\geq \big(1-\frac{5/2}{d}\big)^{d-1} \eps^{d-1}\cdot \frac{5}{6}n\geq 100^{d}\eps^{-1} \big(1-\frac{5/2}{3}\big)^{2}\cdot \frac{5}{6}\geq 100^{d-1}$, where we have used that $\big(1-\frac{5/2}{d}\big)^{d-1}$ is an increasing function of $d$, when $d\geq 3$. As $G$ is a counterexample with $d$ minimal, $G'$ is not a counterexample, so \[\rs(G')\geq \left(1-\frac{5/2}{d}\right)^{d-2}\left(\frac{\eps}{100}\right)^{d-2}|V_1||B\backslash X|\geq e^{-5/2}\frac{\eps^{d-1}}{4\cdot 100^{d-2}}m\cdot \frac{5}{6}n\geq \frac{\eps^{d-1}}{100^{d-1}}mn.\]
Hence, we have $\rs(G)\geq \rs(G')\geq (\frac{\eps}{100})^{d-1}mn$, which is a contradiction.
\end{proof}

\begin{proof}[Proof of the lower bounds in Theorem~\ref{thm:main thm rs_upper}.]
Proving the lower bounds on $\rs_d(m, n, I)$ and $\rs_d(n, m, I)$ is completely symmetric, so we just discuss $\rs_d(m, n, I)$. If $\eps>100\max\{m^{-\frac{1}{d-1}}, n^{-\frac{1}{d}}\}$, let us consider any sets of $m$ points and $n$ hyperplanes, say $\cP$ and $\cH$. The incidence graph $G=G(\cP, \cH)$ avoids the pattern $\Pi_d$ by Lemma~\ref{lemma:forbiddengraphs} and so $G$ contains no good $d$-tuples. Since $G$ has at least $\eps mn$ edges, Lemma~\ref{lemma:asymmetric upper bounds graphs eps} implies $\rs(G)\geq \left(\frac{\eps}{100}\right)^{d-1}mn$.
\end{proof}

\subsection{Constructions $K_{s, s}$-free incidence graphs}

The main goal of this section is to provide examples which show that the bounds given in Theorem~\ref{thm:main thm K_s,s} are tight. The main building block of all constructions in this section are \emph{subspace evasive sets}. Throughout this section, we consider only finite fields $\bF$.

\begin{definition}
A set of points $S\subset \bF^d$ is \emph{$(k, s)$-subspace evasive} if every $k$-dimensional affine subspace of $\bF^d$ contains less than $s$ elements of $S$.
\end{definition}

It is clear that the size of a  $(k,s)$-subspace evasive set is at most $O_s(|\mathbb{F}|^{d-k})$, as one can partition $\bF^d$ into $|\mathbb{F}|^{d-k}$ affine subspaces of dimension $k$. It was proved by Dvir and Lovett \cite{DL12} that for sufficiently large $s$, this trivial bound is optimal. Sudakov and Tomon \cite{ST23} gave an alternative proof using the random algebraic method.

\begin{lemma}[\cite{DL12}]\label{lemma:evasive}
For integers $1\leq k\leq d$, there exists $s\leq d^d$ such that for every large enough finite field $\mathbb{F}$, the space $\mathbb{F}^d$ contains a $(k, s)$-subspace evasive set of size at least $|\bF|^{d-k}$.
\end{lemma}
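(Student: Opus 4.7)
The plan is to follow the random algebraic method used by Sudakov and Tomon~\cite{ST23}. Let $q = |\bF|$ and set $D = d$. Sample $k$ polynomials $f_1, \dots, f_k \in \bF[x_1, \dots, x_d]$ of total degree at most $D$ independently and uniformly at random, and define
$$S = \{x \in \bF^d : f_1(x) = \cdots = f_k(x) = 0\}.$$
The aim is to show that, with positive probability and after deleting a negligible set of ``bad'' points, $S$ has size at least $q^{d-k}$ and meets every $k$-dimensional affine subspace in at most $s = d^d$ points.

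The size estimate is immediate: for any fixed $x \in \bF^d$ the value $f_i(x)$ is uniform in $\bF$, since the constant term of $f_i$ is a free coefficient, and these values are independent in $i$. Hence $\Pb[x \in S] = q^{-k}$, so $\bE[|S|] = q^{d-k}$.

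For the evasiveness property, consider a $k$-dimensional affine subspace $V \subset \bF^d$. The restrictions $f_i|_V$ are polynomials of degree $\leq d$ on $V \cong \bF^k$. In the generic case they cut out a zero-dimensional variety, and Bezout's inequality over an arbitrary field bounds the common zero set by $D^k = d^k \leq d^d = s$.

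The main obstacle, and the technical heart of the proof, is the degenerate case where $f_1|_V, \dots, f_k|_V$ share a positive-dimensional common zero locus on some $V$, in which case $|S \cap V|$ may be as large as $q$. The key observation is that such degeneracy on a fixed $V$ forces specific algebraic relations on the coefficients of the $f_i$ and hence has probability decaying as a power of $q^{-1}$; for example, $\Pb[f_i|_V \equiv 0] = q^{-\binom{k+d}{k}}$, and analogous bounds apply to the richer degeneracies obtained by recursing on $V$. Since there are only $q^{O(d^2)}$ affine $k$-subspaces, while $\binom{k+d}{k}$ grows fast enough in $d$ to beat the relevant union bound, the total expected number of points of $S$ that lie in a degenerate subspace is $o(q^{d-k})$. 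Deleting these points (and, for small $q$ where the union-bound estimates are tight, first passing to a sufficiently large extension field $\bF_{q^N}$ and projecting a generic copy of the construction back) yields, via the probabilistic method, the desired $(k, d^d)$-subspace evasive subset of $\bF^d$ of size at least $q^{d-k}$.
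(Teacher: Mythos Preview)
The paper does not give its own proof of this lemma: it is quoted from Dvir and Lovett~\cite{DL12}, with a remark that Sudakov and Tomon~\cite{ST23} reproved it via the random algebraic method, plus a one-line observation that the factor $1/3$ in the size bound of~\cite{DL12} can be removed by taking a few random translates.

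Your sketch follows the random algebraic route of~\cite{ST23}, and the skeleton (random degree-$d$ polynomials, $\bE|S|=q^{d-k}$, Bezout for zero-dimensional intersections, delete points lying in degenerate flats) is the right one. The gap is in the degeneracy analysis. You bound only the probability of the extreme event $f_i|_V\equiv 0$, which is indeed $q^{-\binom{k+d}{k}}$, and then assert that ``analogous bounds apply to the richer degeneracies''. For $k\ge 2$ this is false: the event that actually matters is that $f_1|_V,\dots,f_k|_V$ have a positive-dimensional common zero locus, equivalently a nontrivial common factor, and this has probability much larger than $q^{-\binom{k+d}{k}}$. For instance, when $k=2$ the dominant contribution is a shared linear factor, which occurs with probability of order $q^{-2d}$, not $q^{-\binom{d+2}{2}}$. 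Consequently your comparison of $\binom{k+d}{k}$ against the $q^{O(d^2)}$ count of $k$-flats is not the relevant one; even for $k=1$ it already fails, since $\binom{d+1}{1}=d+1$ while there are roughly $q^{2(d-1)}$ lines, so for $d\ge 4$ one cannot hope to avoid degenerate flats altogether. What actually makes the deletion argument go through is a stratification of the degeneracy by the dimension and degree of the positive-dimensional component, balancing in each stratum the (larger) probability of that type of degeneracy against the (smaller) number of points it contributes; this bookkeeping is the technical heart of~\cite{ST23}, and without it the sketch is not a proof. The closing remark about passing to $\bF_{q^N}$ and ``projecting back'' is also not a repair: a $(k,s)$-evasive set in $\bF_{q^N}^d$ does not straightforwardly descend to one in $\bF_q^d$.
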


To be more precise, Dvir and Lovett \cite{DL12} construct a $(k, s)$-subspace evasive set of size at least $\frac{1}{3}|\bF|^{d-k}$. However, taking the union of four random translates of such a set results in a $(k, 4s)$-subspace evasive set of size at least $|\bF|^{d-k}$ with high probability. Removing the factor $1/3$ makes our calculations a bit nicer, but other than that has no real effect.

We prove the following equivalent formulation of the upper bounds in Theorem \ref{thm:main thm K_s,s}.

\begin{theorem}\label{thm:lower_1}
For every dimension $d$, there exist $s=s(d)$ and $c=c(d)>0$ such that the following holds. For every  $\alpha>0$ and sufficiently large integer $m$, there exists a field $\bF$, a set of $m$ points $\cP$ and a set of $n=\lfloor m^{\alpha}\rfloor$ hyperplanes $\cH$ in $\bF^d$ such that the incidence graph of $(\cP, \cH)$ is $K_{s, s}$-free and  
$$I(\cP, \cH)\geq \begin{cases} m  &\mbox{ if } \alpha\in (0,\beta_1],\\
cmn^{1-\frac{1}{t}} &\mbox{ if }\alpha\in [\alpha_t,\beta_t]\mbox{ for some }t\in \{2,\dots,d\},\\
cm^{1-\frac{1}{d+2-t}}n&\mbox{ if }\alpha\in [\beta_{t-1},\alpha_t]\mbox{ for some }t\in \{2,\dots,d\},\\
n&\mbox{ if } \alpha\in [\beta_d,\infty).\end{cases}$$
\end{theorem}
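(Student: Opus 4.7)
The plan is to combine the Cartesian-product strategy of \cite{BK03,BCV,ST23} with the optimal subspace-evasive sets furnished by Lemma~\ref{lemma:evasive}. The target incidence bound is piecewise in $\alpha$ with breakpoints $\alpha\in\{\beta_1,\alpha_2,\beta_2,\ldots,\alpha_d,\beta_d\}$, and since adjacent pieces agree at every breakpoint, it will suffice to construct tight configurations at each breakpoint and interpolate between them by subsampling one of the two sides (which scales incidences proportionally and preserves the piecewise bound). In the extremal regimes $\alpha\leq \beta_1$ and $\alpha\geq \beta_d$, the trivial star---placing $n$ points on one hyperplane, or dually $m$ hyperplanes through one point---is automatically $K_{s,s}$-free for $s\geq 2$ and yields $I\geq \max(m,n)$.

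For the main endpoints $\alpha=\beta_t=t/(d+1-t)$ with $t\in\{1,\ldots,d\}$, we fix a prime power $q$ and set $n=q^{d+1-t}$, $m=q^t$, so the target becomes $I\asymp q^d=(mn)^{1-1/(d+1)}$. Applying Lemma~\ref{lemma:evasive}, we take $\cP\subset\bF_q^d$ to be a $(t-1,s_0)$-subspace evasive set of size $q^{d-(t-1)}=n$. Parameterizing affine hyperplanes of $\bF_q^d$ by tuples $(a_0,a_1,\ldots,a_d)\in\bF_q^{d+1}$ modulo scaling, we take $\cH$ from a second subspace-evasive subset of this dual space of size $q^t$. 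Since a generic hyperplane meets an arbitrary $n$-point set in $n/q=q^{d-t}$ points on average, a careful selection of $\cH$ will yield $I\asymp m\cdot q^{d-t}=q^d$. The companion endpoints $\alpha=\alpha_t=t/(d+2-t)$, requiring $n=q^{d+2-t}$ and $I=q^{d+1}$, will be realized by a denser variant of the same idea; for example at $\alpha_2$ one may take $\cP=\bF_q^d$ together with $\cH=\{x_1=a_2x_2+\cdots+a_dx_d+c:(a_2,\ldots,a_d,c)\in T\}$ for a $(d-1,s_1)$-subspace evasive set $T\subset\bF_q^d$ of size $q^2$, and a direct check shows that no three hyperplanes from $\cH$ share a common line, ruling out $K_{s,s}$.

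The $K_{s,s}$-free property will be verified by a dimension count. Given any $s$ hyperplanes $H_1,\ldots,H_s\in\cH$, let $r$ denote the rank of their normals inside $(\bF_q^d)^*$. If $r\geq t$, then $\bigcap_i H_i$ is an affine subspace of dimension $\leq d-t$, and (after casework on whether this dimension exceeds $t-1$, the latter being handled via the dual construction obtained by swapping the roles of points and hyperplanes) the subspace-evasiveness of $\cP$ forces $|\cP\cap \bigcap_i H_i|<s_0$. If instead $r<t$, the normals of $H_1,\ldots,H_s$ all lie in a $(t-1)$-dimensional subspace of $(\bF_q^d)^*$, contradicting the subspace-evasiveness of the parameter set of $\cH$ once $s\geq s_1$. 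Choosing $s=\max(s_0,s_1)+1$ then rules out $K_{s,s}$. \emph{The main obstacle} will be precisely this calibration: matching the evasion dimensions of $\cP$ and $\cH$ in complementary ranges so that neither side contributes a $K_{s,s}$, while keeping $s$ bounded only in terms of $d$. This delicate balance is the technical crux of the constructions of \cite{BK03,BCV,ST23}, and is where the strength of Lemma~\ref{lemma:evasive} over arbitrary dense sets becomes essential.
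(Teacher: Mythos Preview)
Your high-level strategy---evasive sets as building blocks, then subsampling to interpolate between breakpoints---is the right one and matches the paper. But the concrete constructions you propose are either wrong or incomplete, and you leave unresolved exactly the calibration you flag as the main obstacle.

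The $\alpha_2$ example is impossible as stated. A $(d-1,s_1)$-evasive set in $\bF_q^d$ meets every hyperplane in fewer than $s_1$ points, so partitioning $\bF_q^d$ into $q$ parallel hyperplanes shows $|T|<s_1 q$; you cannot take $|T|=q^2$ with $s_1$ bounded in $d$. And the claim ``no three hyperplanes of $\cH$ share a common line'' fails for $d\geq 4$: any three hyperplanes with independent normals meet in a $(d-3)$-flat, which contains lines. With $\cP=\bF_q^d$ this gives a $K_{3,q^{d-3}}$, so even the conclusion you draw from the claim is false.

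The $\beta_t$ construction has a genuine gap in the $K_{s,s}$ verification. When the normals have rank $r\geq t$, the intersection $\bigcap H_i$ has dimension at most $d-t$; but your $\cP$ is only $(t-1)$-evasive, which says nothing unless $d-t\leq t-1$, i.e.\ $t\geq(d+1)/2$. Passing to the ``dual construction'' swaps $\beta_t$ with $\beta_{d+1-t}$, so by symmetry you could restrict to $t\geq(d+1)/2$, but you still never specify the evasion dimension for the parameter set of $\cH$ needed to handle $r<t$, nor do you justify why ``a careful selection of $\cH$'' achieves the average incidence count $n/q$ per hyperplane.

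The paper resolves the calibration more simply than you anticipate: it constructs only at $\alpha_t$, not at $\beta_t$. Take $\cP$ to be $(d-t,s)$-evasive of size $p^t$, take the set of \emph{normals} $\cN$ to be $(t-1,s)$-evasive of size roughly $p^{d+1-t}$, and let $\cH$ consist of \emph{all} hyperplanes with a normal in $\cN$, so $|\cH|\approx p^{d+2-t}$. Then every point is incident to exactly one hyperplane per normal, giving $I=|\cP||\cN|=|\cP||\cH|/p$ exactly---no ``careful selection'' needed. The $K_{s,s}$-freeness is one line with no casework: any $s$ points of $\cP$ span an affine subspace of dimension at least $d-t+1$, while any $s$ hyperplanes with distinct normals in $\cN$ intersect in dimension at most $d-t$. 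The evasion parameters $d-t$ and $t-1$ are exactly complementary, which is what makes this work. Choosing $p$ appropriately, one has $n_0\approx p^{d+2-t}\geq n$ and $m_0\approx p^t\geq m$ for every $\alpha\in[\beta_{t-1},\beta_t]$, so random subsampling down to $(n,m)$ covers the entire range, including $\beta_t$, without a separate construction there.
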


\noindent
The main building block of the proof of this theorem is the following lemma.

\begin{lemma}\label{lemma:construction1}
Let $2\leq t\leq d$ be integers, then there exists an integer $s=s(d)>0$ such that for every prime $p$ the following holds. There exists a set of points $\cP$ and a set of hyperplanes $\cH$ in $\bF_p^d$ such that $G(\cP, \cH)$ does not contain $K_{s, s}$, $|\cP|\geq p^t$, $|\cH|\geq p^{d-t+2}/s$, and \[I(\cP, \cH)=\frac{|\cP||\cH|}{p}\geq \Omega_d\left((|\cP||\cH|)^{1-\frac{1}{d+2}}\right).\]
\end{lemma}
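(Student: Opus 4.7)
The plan is to build $\cP$ and $\cH$ out of two subspace-evasive sets supplied by Lemma~\ref{lemma:evasive}, verify $K_{s,s}$-freeness by a linear-algebra argument, and secure the incidence count by a translation-averaging trick.

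First I apply Lemma~\ref{lemma:evasive} in $\bF_p^d$ with $k=d-t$ to obtain a $(d-t,s_0)$-subspace evasive set $\cP_0\subseteq\bF_p^d$ with $|\cP_0|\ge p^t$, and in $\bF_p^{d+1}$ with $k=t-1$ to obtain a $(t-1,s_0)$-subspace evasive set $T\subseteq\bF_p^{d+1}$ with $|T|\ge p^{d-t+2}$, where $s_0=(d+1)^{d+1}$ depends only on $d$. To each $u=(u_0,u_1,\dots,u_d)\in T$ with $(u_1,\dots,u_d)\ne 0$ I associate the affine hyperplane $H_u=\{x\in\bF_p^d:u_1x_1+\cdots+u_dx_d=-u_0\}$, and note $H_u=H_{\lambda u}$ for $\lambda\ne 0$. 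Since any $1$-dimensional subspace of $\bF_p^{d+1}$ sits inside a $(t-1)$-dimensional subspace (as $t\ge 2$), each projective class contributes fewer than $s_0$ vectors to $T$, and the exceptional set $\{u_1=\cdots=u_d=0\}\cap T$ also has fewer than $s_0$ elements. Defining $\cH$ by choosing one representative per surviving projective class gives $|\cH|\ge p^{d-t+2}/(2s_0)$ for $p$ large enough, so $s=2s_0$ will suffice. The eventual point set $\cP$ will be a translate $\cP_0+v$ of $\cP_0$, preserving both the size and the $(d-t,s_0)$-subspace evasive property.

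To verify $K_{s,s}$-freeness, I suppose for contradiction that distinct hyperplanes $H_{u^1},\dots,H_{u^s}\in\cH$ and distinct points $x_1,\dots,x_s\in\cP$ form a $K_{s,s}$ in the incidence graph. Each $u^i$ then lies in the orthogonal complement $W^\perp\subseteq\bF_p^{d+1}$ of $W:=\mathrm{span}\{(1,x_j):j\le s\}$. Writing $r=\dim\mathrm{aff.span}\{x_1,\dots,x_s\}$, one has $\dim W=r+1$ and $\dim W^\perp=d-r$. If $r\ge d-t+1$, then $\dim W^\perp\le t-1$, and the $(t-1,s_0)$-evasiveness of $T$ gives $|T\cap W^\perp|<s_0\le s$, contradicting the existence of $s$ distinct $u^i\in T$. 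Otherwise $r\le d-t$, so the $x_j$'s lie in a common $(d-t)$-dimensional affine subspace, contradicting the $(d-t,s_0)$-evasiveness of $\cP$.

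The delicate step is the incidence count, which I resolve by averaging over translations of $\cP_0$. For each fixed hyperplane $H\in\cH$,
\[
\sum_{v\in\bF_p^d}|H\cap(\cP_0+v)|\;=\;\sum_{v\in\bF_p^d}|(H-v)\cap\cP_0|\;=\;p^{d-1}\cdot|\cP_0|,
\]
because the map $v\mapsto H-v$ hits each affine hyperplane parallel to $H$ exactly $p^{d-1}$ times, and each point of $\cP_0$ lies in a unique such parallel hyperplane. Summing over $H\in\cH$ yields $\sum_v I(\cP_0+v,\cH)=|\cH|\,p^{d-1}\,|\cP_0|$, so the average of $I(\cP_0+v,\cH)$ over $v$ equals $|\cH||\cP_0|/p$. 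Picking any $v$ that achieves at least the average and setting $\cP:=\cP_0+v$, we obtain $I(\cP,\cH)\ge|\cH||\cP|/p\ge p^{d+1}/(2s_0)=\Omega_d(|\cP||\cH|/p)$. The final inequality $|\cP||\cH|/p\ge\Omega_d\bigl((|\cP||\cH|)^{1-1/(d+2)}\bigr)$ then follows automatically from $|\cP||\cH|\le O_d(p^{d+2})$.
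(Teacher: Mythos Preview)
Your approach is essentially correct but differs from the paper's, and it has two slips worth flagging.

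\textbf{Comparison with the paper's proof.} The paper also uses two evasive sets from Lemma~\ref{lemma:evasive}, but both live in $\bF_p^d$: a $(d-t,s)$-evasive point set $\cP$ and a $(t-1,s)$-evasive set $\cN_0$ of \emph{normal vectors}. After thinning $\cN_0$ to one point per line through the origin, the hyperplane family $\cH$ consists of \emph{all} hyperplanes $\langle w,x\rangle=b$ with $w\in\cN$, $b\in\bF_p$. This structure makes the incidence count automatic: for each $x\in\cP$ and each $w\in\cN$ exactly one $b$ works, giving $I(\cP,\cH)=|\cP||\cN|=|\cP||\cH|/p$ with equality and no averaging needed. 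Your route---embedding hyperplanes as a $(t-1,s_0)$-evasive set in $\bF_p^{d+1}$ via homogeneous coordinates---is conceptually natural, and your $K_{s,s}$-freeness argument via the lift $x\mapsto(1,x)$ is clean. But you then need the translation-averaging trick to recover incidences, and this only yields $I(\cP,\cH)\ge|\cP||\cH|/p$, not the equality the lemma asserts. (The inequality suffices for the downstream application in Theorem~\ref{thm:lower_1}, so this is not fatal, but it does not prove the lemma as stated.)

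\textbf{Two errors.} First, your closing line has the inequality backwards: the bound $|\cP||\cH|/p\ge\Omega_d\bigl((|\cP||\cH|)^{1-1/(d+2)}\bigr)$ is equivalent to $|\cP||\cH|\ge\Omega_d(p^{d+2})$, i.e.\ you need a \emph{lower} bound on $|\cP||\cH|$, not the upper bound $|\cP||\cH|\le O_d(p^{d+2})$ you invoke. The required lower bound does hold (from $|\cP|\ge p^t$ and $|\cH|\ge p^{d-t+2}/(2s_0)$), so the conclusion survives. Second, your caveat ``for $p$ large enough'' conflicts with the lemma's ``for every prime $p$''; in fact with $s=2s_0$ one checks $|\cH|\ge p^{d-t+2}/s$ for all primes $p$ by a short case split on whether $p^{d-t+2}\ge 2s_0$, so the restriction is unnecessary.
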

\begin{proof}
 By Lemma \ref{lemma:evasive}, there exist a constant $s=s(d)$ and sets $\cP,\cN_0\subset \mathbb{F}_p^d$ such that $|\cP|\geq p^{t}$, $|\cN_0|\geq p^{d-t+1}$, $\cP$ is $(d-t, s)$-subspace evasive and $\cN_0$ is $(t-1, s)$-subspace evasive. In other words, any $s$ points of $\cP$ span an affine subspace of dimension at least $d-t+1$, and any $s$ points of $\cN_0$ span an affine subspace of dimension at least $t$. 

In particular, no line through the origin contains more than $s$ points of $\cN_0$. Thus, by keeping at most one point in each line through the origin, we get a subset $\cN$ of $\cN_0$ of size at least $\geq \frac{1}{s}|\cN_0|$.

Let $\cH$ be the set of all hyperplanes whose normal vectors are from $\cN$, i.e. all hyperplanes of the form $\langle w, x\rangle =b$ for $w\in \cN, b\in \bF_p$. Since no line through the origin contains more than one point of $\cN$, we observe that every pair $(w, b)\in \cN\times \bF_p$ yields a different hyperplane. Thus, the size of $\cH$ is $|\cH|=p|\cN|\geq p^{d-t+2}/s$. 

To count the incidences between $\cP$ and $\cH$, we observe that for every $w\in \cN$ and every $x\in \cP$, there is a unique $b\in \mathbb{F}_p$ such that $\langle w, x\rangle =b$. This means that out of $p$ hyperplanes of $\cH$ with normal vector $w\in \cN$, exactly one is incident to $x$. Hence, the number of incidences between $\cP$ and $\cH$ is 
$I(\cP, \cH)=|\cP||\cN|=\frac{1}{p}|\cP||\cH|$ and so
$I(\cP, \cH)\geq \Omega_d\left((|P||\cH|)^{1-\frac{1}{d+2}}\right).$

Finally, we argue that the incidence graph $G(\cP, \cH)$ is $K_{s,s}$-free. Indeed, any $s$ points of $\cP$ span an affine subspace of dimension at least $d-t+1$. On the other hand, the intersection of $s$ hyperplanes is nonempty only when no two of them are parallel, meaning that their normal vectors are distinct. If these $s$ hyperplanes intersect in a $(d-t+1)$-dimensional affine space, this means that the $s$ normal vectors lie in a $(t-1)$-dimensional linear space orthogonal to it. But this is not possible since $\cN$ is $(t-1, s)$-subspace evasive. Hence, the intersection of $s$ hyperplanes can be at most $(d-t)$-dimensional. As $d-t<d-t+1$, this shows that $G(\cP,\cH)$ is $K_{s,s}$-free.
\end{proof}

Observe that Lemma~\ref{lemma:construction1} implies the existence of $\cP, \cH$  with $|\cP|=m_0=\Omega_d(p^t), |\cH|=n_0=\Omega_d(p^{d+2-t})$ and $I(\cP, \cH)=\Omega_d((m_0n_0)^{1-\frac{1}{d+2}})$. This only covers a sparse set of potential values of $m$ and $n$. However, by using an approximation and a subsampling trick, one can cover all possible values of $m$ and $n$.

\begin{proof}[Proof of Theorem~\ref{thm:lower_1}]
Let us first discuss the boundary cases, i.e. when $\alpha\in (0,\beta_1]$ or $\alpha\in [\beta_d,\infty)$. In this case, we show that there exist sets $\cP$ and $\cH$ with $|\cP|=m, |\cH|=n=m^\alpha$, $I(\cP, \cH)\geq \max\{m, n\}$ and such that $G(\cP, \cH)$ does not contain $K_{2, 2}$. In the case $\alpha\in [0,\beta_1]$, it suffices to choose $p\geq \max\{m, n\}$ and take $\cH$ to be $n$ parallel hyperplanes in $\bF_p^d$ and $\cP$ to be $m$ points on one of the hyperplanes. On the other hand, when $\alpha\in [\beta_d,\infty)$, let $\cP$ be $m$ collinear points and $\cH$ be a set of $n$ hyperplanes containing exactly one of these points.

Let us now present the main argument. The idea of the proof is to pick a configuration $(\cP_0, \cH_0)$ given by Lemma~\ref{lemma:construction1} and randomly sample a proportion of it to obtain the required values of $m, n$. 

First, consider the case when $\alpha_t \leq \alpha\leq \beta_t$ for some $t\in \{2, \dots, d\}$. Let $p$ be the smallest prime larger than $2(sn)^{1/t}$. By Bertrand's postulate, we have $p\leq 4(sn)^{1/t}$. By applying Lemma~\ref{lemma:construction1} with $d+2-t$ instead of $t$, we obtain a set of points $\cP_0$ and a set of hyperplanes $\cH_0$ in $\bF_p^d$ with $|\cP_0|=m_0\geq p^{d+2-t}, |\cH_0|=n_0\geq p^{t}/s$ and $I(\cP_0, \cH_0)=\frac{1}{p}|\cP_0||\cH_0|$. Moreover, $G(\cP_0, \cH_0)$ does not contain $K_{s, s}$ as a subgraph.

Let $\cP$ and $\cH$ be random subsets of $\cP_0, \cH_0$ containing $m$ and $n$ elements, respectively. Observe that $n_0\geq \frac{1}{s}p^t\geq n$ and $m_0\geq p^{d+2-t}\geq m^{\frac{d+2-t}{t}}\geq n^{1/\alpha}=m$, so it indeed makes sense to consider such subsets of $\cP_0, \cH_0$. The expected number of incidences between $\cP$ and $\cH$ is simply $\bE[I(\cP, \cH)]=\frac{m}{m_0}\cdot \frac{n}{n_0} I(\cP_0, \cH_0)\geq \frac{1}{p}mn$. Since $p\leq 4(sn)^{-1/t}$, picking a pair $(\cP, \cH)$ with at least as many incidences as the expectation, we get
\[I(\cP, \cH)\geq \frac{1}{p}mn\geq \frac{1}{4s^{1/t}} n^{-\frac{1}{t}}\cdot mn.\]

In the case $\beta_{t-1}\leq \alpha\leq \alpha_t$, we proceed similarly. We pick $p$ to be the smallest prime larger than $2s^{\frac{1}{t}}m^{\frac{1}{d+2-t}}$, then $p<4s^{\frac{1}{t}}m^{\frac{1}{d+2-t}}$. We apply Lemma~\ref{lemma:construction1} with $d+2-t$ instead of $t$ to find a set of points $\cP_0$ and a set of hyperplanes $\cH_0$ in $\bF_p^d$ with $|\cP_0|=m_0\geq p^{d+2-t}, |\cH_0|=n_0\geq p^{t}/s$ and $I(\cP_0, \cH_0)=\frac{1}{p}|\cP_0||\cH_0|$. By an almost identical computation as before one can check that $m_0\geq m$ and $n_0\geq m^{\alpha}\geq n$. Hence, if $\cP\subset \cP_0$ is a random $m$-element subset and $\cH\subset \cH_0$ is a random $n$-element subset, they satisfy $\mathbb{E}(I(\cP, \cH))\geq \frac{1}{p}mn\geq \frac{1}{4s^{1/t}} m^{-\frac{1}{d+2-t}}\cdot mn$. Therefore, we can pick a pair $(\cP, \cH)$ with at least as many incidences as the expectation, completing the proof.
\end{proof}

\subsection{Constructions of dense incidence graphs with no large bipartite subgraphs}

In this section, we prove the upper bounds of Theorem~\ref{thm:main thm rs}. However, instead of assuming $m\leq n$ and giving bounds on both $\rs_d(m, n, I)$ and $\rs_d(n, m, I)$, as originally stated in Theorem~\ref{thm:main thm rs}, we prefer to keep the notation consistent by always denoting the number of hyperplanes by $m$ and the number of points by $n$. Thus, we will give bounds on $\rs_d(m, n, I)$ which depend on $\max\{m, n\}$ and $\min\{m, n\}$. With this notational change, Theorem~\ref{thm:main thm rs} can be equivalently restated as follows.

\begin{theorem}\label{thm:lower_2}
Let $d, m, n$ be positive integers, $\eps\in (0,1)$ and $I=\eps mn$. 
\begin{itemize}
    \item If $\eps\geq \frac{1}{4}\max\{m, n\}^{-1/d}$ and $\eps\geq \frac{1}{4}\min\{m, n\}^{-1/(d-1)}$, then $\rs_d(m, n, I)\leq O_d(\eps^{d-1}mn).$
    \item If $\eps<\frac{1}{4}\max\{m, n\}^{-1/d}$ or $\eps<\frac{1}{4}\min\{m, n\}^{-1/(d-1)}$ then  $\rs_d(m, n, I)\leq O_d(\eps \max\{m, n\}).$
\end{itemize}
\end{theorem}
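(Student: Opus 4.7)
The plan is to construct explicit point-hyperplane configurations over finite fields realizing the claimed upper bounds on $\rs_d$, with Lemma~\ref{lemma:construction1} (subspace-evasive sets) as the main ingredient. The proof splits according to the regime.

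For the dense regime, where $\eps\geq\frac{1}{4}\max\{m,n\}^{-1/d}$ and $\eps\geq\frac{1}{4}\min\{m,n\}^{-1/(d-1)}$, I would pick a prime $p$ of order $\Theta(1/\eps)$ using Bertrand's postulate, and then choose $t\in\{2,\dots,d\}$ so that the configuration $(\cP_0,\cH_0)$ produced by Lemma~\ref{lemma:construction1} satisfies $|\cP_0|\geq n$ and $|\cH_0|\geq m$. This amounts to the feasibility condition $p^{d+2}\geq smn$, which I would verify follows from combining the two hypotheses on $\eps$. I then subsample $\cP\subseteq\cP_0$ and $\cH\subseteq\cH_0$ uniformly at random to obtain exact sizes $n$ and $m$. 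In expectation the sampled configuration has $\approx mn/p\geq \eps mn$ incidences, so a realization meeting the target exists. Since the incidence graph inherits $K_{s,s}$-freeness, any complete bipartite subgraph $K_{a,b}$ has $\min\{a,b\}<s$, so $\rs(\cP,\cH)\leq s\max\{m,n\}$. The assumption $\eps\geq\frac{1}{4}\min\{m,n\}^{-1/(d-1)}$ rearranges to $\min\{m,n\}\geq \Omega_d(\eps^{-(d-1)})$, converting this into $\rs(\cP,\cH)\leq O_d(\eps^{d-1}mn)$, as required.

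For the sparse regime, the target bound $\rs\leq O_d(\eps\max\{m,n\})$ is of ``maximum-degree'' type and is stronger than what the $K_{s,s}$-freeness of a full-size configuration would yield. Instead, I plan to build the incidence structure at a smaller scale: apply Lemma~\ref{lemma:construction1} with parameters so that $\max\{|\cP_0|,|\cH_0|\}=\Theta_d(\eps\max\{m,n\})$ while the total incidence count is at least $\eps mn$. After embedding this smaller configuration into $\bF^d$ (passing to a large enough extension of $\bF$ if needed so that generic positioning is possible), I would pad with $n-|\cP_0|$ additional points and $m-|\cH_0|$ additional hyperplanes placed in generic position so that no new incidences are created. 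Isolated vertices do not enlarge $\rs$, so the final incidence graph satisfies $\rs\leq s\max\{|\cP_0|,|\cH_0|\}=O_d(\eps\max\{m,n\})$.

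The main obstacle is verifying the parameter choices in every subcase. In the dense regime one must check that the Lemma~\ref{lemma:construction1} feasibility condition $p^{d+2}\geq smn$ holds with $p=\Theta(1/\eps)$ together with a valid choice of $t$; this splits into cases depending on which of the two hypotheses on $\eps$ is the binding one (equivalently, whether $\max\{m,n\}$ is at most a constant multiple of $\min\{m,n\}^{d/(d-1)}$). The sparse regime requires a parallel analysis, with the two defining conditions $\eps<\frac{1}{4}\max\{m,n\}^{-1/d}$ and $\eps<\frac{1}{4}\min\{m,n\}^{-1/(d-1)}$ potentially demanding different choices of $p$, $t$, $|\cP_0|$ and $|\cH_0|$, and with the padding step needing to be justified over a sufficiently large extension field. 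Once these cases are laid out, however, each reduces to a routine verification of inequalities and a single application of Lemma~\ref{lemma:construction1}.
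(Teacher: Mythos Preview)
Your dense-regime plan cannot work: the feasibility condition $p^{d+2}\geq smn$ does \emph{not} follow from the two hypotheses on $\eps$. Those hypotheses rearrange to $\max\{m,n\}\geq(4\eps)^{-d}$ and $\min\{m,n\}\geq(4\eps)^{-(d-1)}$, i.e.\ they are \emph{lower} bounds on $m,n$ and place no upper bound on $mn$ whatsoever. For instance $m=n$ with $\eps=1/2$ satisfies both hypotheses for every $n$, yet with $p=\Theta(1/\eps)=O(1)$ Lemma~\ref{lemma:construction1} only produces sets of size $O_d(1)$. More fundamentally, your step ``pick a $K_{s,s}$-free $(\cP_0,\cH_0)$ of density $\geq\eps$ and subsample'' is impossible once $\eps$ is moderately large: by Theorem~\ref{thm:upper bounds K_s,s symmetric}, a $K_{s,s}$-free configuration of $n$ points and $n$ hyperplanes has at most $O_{s,d}\big(n^{2-1/\lceil (d+1)/2\rceil}\big)=o(n^2)$ incidences over any field, so no such configuration of constant density exists. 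The sparse regime has the parallel problem: forcing $\max\{|\cP_0|,|\cH_0|\}=\Theta_d(\eps\max\{m,n\})$ so that $K_{s,s}$-freeness yields the desired $\rs$ bound caps $I(\cP_0,\cH_0)\leq|\cP_0||\cH_0|$ at $O_d(\eps^2\max\{m,n\}^2)$, which falls short of $\eps mn$ whenever $\min\{m,n\}\gg\eps\max\{m,n\}$ (e.g.\ $m=n$ and $\eps$ just below $\tfrac14 n^{-1/d}$).

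The paper's remedy is the genuinely stronger Lemma~\ref{lemma:construction2}: one takes unions of $k$ (resp.\ $s\ell$) random translates of the evasive sets, which decouples $|\cP|$ and $|\cH|$ from the prime $p$ and lets them be as large as needed. The resulting configurations are no longer usefully controlled by $K_{s,s}$-freeness; instead the $\rs$ bound is obtained from a three-way case analysis using the graded evasiveness properties (ii) and (iii) of that lemma (Lemma~\ref{lemma:construction3}). On top of this, the dense regime and one sparse subcase are not handled directly but by blowing up a lower-dimensional instance in one or two extra coordinates (Lemmas~\ref{lemma:construction4}, \ref{lemma:construction4'}, \ref{lemma:construction5}).
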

The implied constant depending on the dimension in the above theorem is at most $2^{O(d\log d)}$. The main geometric construction is contained in the following lemma, which can be thought of as a strengthening of Lemma~\ref{lemma:construction1}. 

\begin{lemma}\label{lemma:construction2}
Let $p$ be a prime and let $d, t, k, \ell$ be positive integers satisfying $t\leq d-1$ and $k, \ell\leq p^{d-t}$. Then, there exist an integer $s=O(d^d)$, a set of points $\cP$ and a set of hyperplanes $\cH$ in $\bF_p^d$ satisfying the following properties:
\begin{itemize}
    \item[(i)] $kp^t/2\leq |\cP|\leq kp^{t}$, $\frac{1}{4}\ell p^t\leq |\cH|\leq \ell s p^t$, and every point of $\cP$ is contained in exactly $|\cH|/p$ hyperplanes of $\cH$. In particular, $I(\cP, \cH)=\frac{|\cP|\cdot |\cH|}{p}$.
    \item[(ii)] For  $a\in \{1,\dots, t-1\}$ and any set $A\subseteq \cP$ of size $|A|\geq skp^a$, $A$ is not contained in a $(d-t+a)$-dimensional affine subspace. 
    \item[(iii)] For $b\in \{1,\dots,t-2\}$ and any set $B\subseteq \cH$ of size $|B|\geq s^2\ell p^b$, the intersection $\bigcap_{H_i\in B} H_i$ does not contain a $(t-1-b)$-dimensional affine subspace. 
\end{itemize}
\end{lemma}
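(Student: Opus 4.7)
The plan is to extend the construction in Lemma~\ref{lemma:construction1} by taking unions of $k$ and $\ell$ random translates of small subspace-evasive sets, lifting the normals to a fixed affine hyperplane of $\bF_p^d$ in order to avoid collisions among hyperplane equations. Let $s_0=O(d^d)$ be the constant supplied by Lemma~\ref{lemma:evasive}, and apply the lemma to obtain a $(d-t,s_0)$-subspace evasive set $S\subseteq \bF_p^d$ with $|S|=p^t$, together with a $(d-t,s_0)$-subspace evasive set $T'\subseteq \bF_p^{d-1}$ with $|T'|=p^{t-1}$. I will then take $\cP$ to be a random union of $k$ translates of $S$ in $\bF_p^d$, take $\tilde T$ to be a random union of $\ell$ translates of $T'$ in $\bF_p^{d-1}$, embed $N=\{(y,1) : y\in\tilde T\}\subseteq\bF_p^d$, and set $\cH=\{H_{w,c}:w\in N, c\in\bF_p\}$ where $H_{w,c}=\{x : \langle w,x\rangle=c\}$.

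For property (i), the upper bounds $|\cP|\le kp^t$ and $|\tilde T|\le \ell p^{t-1}$ are trivial. For the lower bounds, a first-moment calculation controls the overlap between random translates: for $\cP$ the expected size of pairwise overlap is at most $\binom{k}{2} p^{2t}/p^d\le kp^t/2$ using the hypothesis $k\le p^{d-t}$, so with positive probability $|\cP|\ge kp^t/2$; the analogous bound for $\tilde T$ using $\ell\le p^{d-t}$ yields $|\tilde T|\ge \ell p^{t-1}/2$. The key payoff of the embedding is that every $w\in N$ has last coordinate $1$, so no two normals are scalar multiples, and all $p|N|$ hyperplanes are distinct, giving $|\cH|=p|N|\in[\ell p^t/2,\ell p^t]$. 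Moreover, each $x\in\cP$ lies on exactly one hyperplane per direction $w\in N$ (namely $H_{w,\langle w,x\rangle}$), so $I(\cP,\cH)=|\cP|\cdot|N|=|\cP|\cdot|\cH|/p$.

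Properties (ii) and (iii) will follow from a foliation argument. For (ii), I partition any $(d-t+a)$-dimensional affine subspace $L\subseteq\bF_p^d$ into $p^a$ parallel $(d-t)$-dimensional affine subspaces; each meets every translate $v_i+S$ in at most $s_0$ points by the $(d-t,s_0)$-evasiveness of $S$, so $|L\cap\cP|\le ks_0p^a$, and taking $s:=s_0+1$ suffices. For (iii), the hyperplanes of $\cH$ containing a fixed $(t-1-b)$-dimensional affine subspace $L'$ correspond bijectively to normals $w\in N$ lying in the $(d-t+1+b)$-dimensional linear subspace $V=\overline{L'}^{\perp}$. If $V\subseteq\{x_d=0\}$ then $N\cap V=\emptyset$; otherwise the affine slice $V\cap\{x_d=1\}$ has dimension $d-t+b$ inside $\bF_p^{d-1}$ and is partitioned into $p^b$ parallel $(d-t)$-dimensional affine subspaces, each meeting every translate $u_j+T'$ in at most $s_0$ points, giving $|N\cap V|\le\ell s_0p^b$ and hence fewer than $s^2\ell p^b$ such hyperplanes.

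The main obstacle I expect is controlling the number of \emph{distinct} hyperplane equations in $\cH$, since the naive construction where normals are drawn from a union of translates of an evasive set in $\bF_p^d$ can produce many collinear normals (especially when $d-t+1\ge 2$, where the evasive condition on $T$ does not restrict how points are distributed along lines through the origin); the "lift to last coordinate $1$" trick sidesteps this issue completely by pushing everything into an affine hyperplane that intersects each ray through the origin exactly once. The secondary technical point is that the hypotheses $k,\ell\le p^{d-t}$ are used precisely to calibrate the first-moment overlap bounds so that the constants $\tfrac{1}{2}$ and $\tfrac{1}{4}$ in the conclusion of the lemma go through.
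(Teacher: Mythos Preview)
Your proposal is correct and proceeds by a genuinely different (and somewhat slicker) route than the paper. The paper builds the set of normals inside $\bF_p^d$ by taking a $(d-t+1,s)$-evasive set $\cQ_0$ of size $p^{t-1}$, deleting the points lying on the hyperplane $U=\{x_d=0\}$, taking $s\ell$ random translates by vectors in $U$, and then \emph{selecting one representative per line through the origin} to guarantee distinct hyperplane equations; this selection step is why the paper needs $s\ell$ translates (to compensate for losing up to a factor $s$) and why the upper bound on $|\cH|$ carries the factor $s$. Your ``lift to last coordinate $1$'' trick replaces all of that: working with a $(d-t,s_0)$-evasive set $T'\subseteq\bF_p^{d-1}$ and embedding $N=\tilde T\times\{1\}$ makes the normals automatically pairwise non-proportional, so only $\ell$ translates are required and the selection step disappears. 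For property~(iii), the paper argues via the $(d-t+1,\ell s^2)$-evasiveness of $\cQ$ in $\bF_p^d$, while you instead slice $V=\overline{L'}^\perp$ by $\{x_d=1\}$, project to $\bF_p^{d-1}$, and foliate the resulting $(d-t+b)$-dimensional affine slice into $p^b$ pieces of dimension $d-t$; this gives the slightly sharper bound $|B|\le \ell s_0 p^b$ rather than $\ell s^2 p^b$. The price of your approach is the need to invoke Lemma~\ref{lemma:evasive} once in ambient dimension $d-1$ (which is harmless since $1\le d-t\le d-1$); the payoff is a cleaner construction with tighter constants.
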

\begin{proof}
By Lemma \ref{lemma:evasive}, there exists $s=O(d^d)$ such that for every prime $p$ there exists a $(d-t, s)$-subspace evasive set $\cP_0$ containing $p^t$ points of $\bF_p^d$. In other words, any $s$ points of $\cP_0$ span an affine subspace of dimension at least $d-t+1$. Furthermore, there exists a set of points $\cQ_0$ containing  $p^{t-1}$ points of $\bF_p^d$ such that $\cQ_0$ is $(d-t+1, s)$-subspace evasive, which means that any $s$ points of $\cQ_0$ span an at least $(d-t+2)$-dimensional affine subspace.

The set $\cP$ is constructed as the union of $k$ random translates of $\cP_0$. More precisely, we choose $k$ random vectors $u_1, \dots, u_k\in \bF_p^d$ (with repetition) and set $\cP=\bigcup_{i=1}^k (\cP_0+u_i)$. The set $\cP$ constructed in this way is $(d-t, ks)$-subspace evasive, since any affine space of dimension $d-t$ containing $ks$ elements of $\cP$ must contain at least $s$ elements of one of the translates $\cP_0+u_i$. But this is impossible, since $\cP_0$ is a $(d-t, s)$-subspace evasive set.

Next, we compute the expected size of $\cP$ and argue that $u_1, \dots, u_k$ can be chosen such that $|\cP|\geq \frac{1}{2}kp^t$. For a given point $x\in \bF_p^d$, the probability that $x$ lies in $\cP_0+u_i$ is $$\Pb[x\in (\cP_0+u_i)]=\Pb[u_i\in (x-\cP_0)]=\frac{1}{p^d}|\cP_0|.$$ 
Thus, the expected size of $\cP$ is
\begin{align*}
\bE\big[|\cP|\big]=\sum_{x\in \bF_p^d} \Pb[x\in \cP]&=\sum_{x\in \bF_p^d} \Big(1-\prod_{i=1}^k \Pb[x\notin (\cP_0+u_i)]\Big)=\sum_{x\in \bF_p^d} \left(1-\Big(1-\frac{|\cP_0|}{p^d}\Big)^k\right)\\
&\geq p^d\Big(1-\exp\Big(-\frac{k|\cP_0|}{p^d}\Big)\Big)\geq p^d\frac{k|\cP_0|}{2p^d}=\frac{1}{2}kp^t.
\end{align*}
In the above calculation, we have used the inequalities $1-\frac{x}{2}\geq e^{-x}\geq 1-x$ for $x\in (0, 1)$. Therefore, we conclude that $u_1, \dots, u_k$ can be chosen such that $\cP$ has size at least $\frac{1}{2}kp^t$. On the other hand, we always have $|\cP|\leq k|\cP_0|=kp^t$.

Let us now explain how to process the set $\cQ_0$ and how to construct from it the set of hyperplanes $\cH$. Let $U=\{x\in \bF_p^d:x(d)=0\}$. Since we may translate the set $\cQ_0$ without altering its properties, we may assume that at most $\frac{|\cQ_0|}{p}$ elements of $\cQ_0$ lie in $U$. Then, we define the set $\cQ_1=\cQ_0\backslash U$, which is still $(d-t+1, s)$-subspace evasive and has size $|\cQ_1|\geq |\cQ_0|/2$.

Pick $s\ell$ uniformly random vectors $v_1, \dots, v_{s\ell}\in U$ and define $\cQ_2=\bigcup_{i=1}^{s\ell} (\cQ_1+v_i)$. In a similar manner as above, we observe that $\cQ_2$ is a $(d-t+1, \ell s^2)$-subspace evasive set. Our goal now is to show that $v_1, \dots, v_{s\ell}$ can be chosen in a way such that at least $\frac{1}{4}\ell p^{t-1}$ lines through the origin contain a point of $\cQ_2$. If $L\subset \bF_p^d$ is a fixed line through the origin, note that $L$ intersects $\cQ_1+v_i$ if and only if the translate of $L$ given by $L-v_i$ intersects $\cQ_1$. Note that union of all the translates of $L$ covers $\cQ_1$.
Since $\cQ_1$ is a $(d-t+1, s)$-subspace evasive set, each translate of $L$ contains at most $s$ points of $\cQ_1$ and so at least $|\cQ_1|/s$ translates of $L$  intersect $\cQ_1$. Thus, we conclude $\Pb[L\cap (\cQ_1+v_i)\neq \emptyset]\geq \frac{|\cQ_1|/s}{p^{d-1}}$. If we denote the number of lines through the origin intersecting $\cQ_2$ by $Y$, by a similar calculation as above we have 
\[\bE[Y]=\sum_{L\text{ line through }0} 1-\Pb[L\cap \cQ_2=\emptyset]\geq
\sum_{L\text{ line through }0} 1-\left(1-\frac{|\cQ_1|}{sp^{d-1}}\right)^{s\ell}\geq p^{d-1}\frac{\ell|\cQ_1|}{2p^{d-1}}\geq \frac{1}{4}\ell p^{t-1}.\]

We form the subset $\cQ\subseteq \cQ_2$ by retaining at most one point on each line through the origin. The above calculation implies that $v_1, \dots, v_{s\ell}$ can be chosen such that $|\cQ|\geq \frac{1}{4}\ell p^{t-1}$. On the other hand, the size of $\cQ$ is bounded by $|\cQ|\leq |\cQ_2|\leq s\ell |\cQ_0|\leq s\ell p^{t-1}$ and $\cQ$ is a $(d-t+1, \ell s^2)$-subspace evasive set.

Finally, we choose $\cH$ to be the set of hyperplanes given by the equations $\langle q, x\rangle =b$, for $q\in \cQ$ and $b\in \bF_p$. We ensured that no two points $q_1, q_2\in \cQ$ lie on the same line through the origin, meaning that the hyperplanes given by $\langle q_1, x\rangle =b_1$, $\langle q_2, x\rangle =b_2$ are distinct for all $q_1, q_2\in \cQ$ and $b_1, b_2\in \bF_p$.

All that is left is to show that the conditions of the theorem hold for the sets $\cP, \cH$. We have already shown that $|\cP|\in [kp^t/2,kp^{t}]$. Furthermore, since every pair $(q, b)\in \cQ\times \bF_p$ yields a different hyperplane of $\cH$, we conclude $|\cH|=p|\cQ|\in \left[\frac{1}{4}\ell p^{t}, s\ell p^{t}\right]$. Finally, for every $x\in \cP$ and every $q\in \cQ$, there is a unique $b$ for which $\langle x, q\rangle =b$, and therefore the number of hyperplanes containing $x$ is precisely $|\cQ|=|\cH|/p$. This verifies (i).

Now, we prove that for any $a\in \{1,\dots,t-1\}$, no $(d-t+a)$-dimensional affine subspace contains $skp^a$ points of $A$. Suppose this is not the case and there was a set $A\subseteq P$ of size $|A|\geq skp^a$ which was contained in a $(d-t+a)$-dimensional affine space $V$. Note that $V$ can be partitioned into $p^a$ translates of some $(d-t)$-dimensional affine subspace $V'<V$. But then there exists a translate of $V'$ containing at least $ks$ points of $\cP$.This is a contradiction since $\cP$ is a $(d-t, ks)$-subspace evasive set, thus establishing (ii).

We proceed by a similar argument to prove (iii). Let $B$ be a set of $s^2\ell p^b$ hyperplanes, denoted by $H_1, \dots, H_{s^2\ell p^b}$. Furthermore, we assume that the equation of $H_i$ is given by $\langle q_i, x\rangle =b_i$. Observe that if the intersection of the hyperplanes $H_i$ is nonempty, one must have $q_i\neq q_j$ for all $i, j\in [s^2\ell p^b]$, since otherwise two parallel hyperplanes belong to $B$. Let us denote the set of all points $q_i$ for $i\in [s^2\ell p^b]$ by $B'\subseteq \cQ$. Assume, for contradiction, that $\bigcap_{i=1}^{s^2\ell p^b}H_i$ contains a $(t-1-b)$-dimensional affine subspace. This affine subspace can be written as $V+q$ for some $q\in \bF_p^d$ and some $(t-1-b)$-dimensional linear subspace $V$. Then all points $q_i$ are contained in the orthogonal complement of $V$, which we denote by $V^{\perp}$. But $V^{\perp}$ is a $(d-t+1+b)$-dimensional linear space, which contains $s^2\ell p^b$ points of $\cQ$. By partitioning this space into $p^b$ translates of a $(d-t+1)$-dimensional affine space, and using that $\cQ$ is a $(d-t+1, \ell s^2)$-subspace evasive set, one arrives at a contradiction as above.
\end{proof}

In what follows, we prove Theorem \ref{thm:lower_2} in a series of lemmas, addressing different regimes.

\begin{lemma}\label{lemma:construction3}
Let $\eps\in (0,1)$, and let $d$ and $m, n$ be integers which satisfy 
$$\eps\leq \frac{1}{4} \max\{m, n\}^{-\frac{1}{d}}.$$
Then there exists a prime $p$, a set $\cP$ of $m$ points and a set $\cH$ of $n$ hyperplanes in $\bF_p^d$ such that $I(\cP, \cH)\geq \eps mn$ and $\rs(\cP, \cH)\leq (2s)^4\eps \max\{m, n\}$, where $s$ is given by Lemma \ref{lemma:construction2}.
\end{lemma}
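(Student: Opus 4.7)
The plan is to invoke Lemma~\ref{lemma:construction2} with $t = d-1$ and then subsample the resulting configuration down to the required sizes $n$ and $m$. The choice $t = d-1$ is the largest admissible value of $t$, and it is exactly what makes both the point-side evasiveness ($\cP$ is $(1,ks)$-subspace evasive) and the normal-side evasiveness ($\cQ$ is $(2, \ell s^2)$-subspace evasive) strong enough to force $\rs = O(\eps \max\{m,n\})$.

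First, I would pick, via Bertrand's postulate, a prime $p$ in the interval $[1/(2\eps), 1/\eps]$. The hypothesis $\eps \leq \tfrac{1}{4}\max\{m,n\}^{-1/d}$ then yields $p^d \geq 2^d \max\{m,n\}$, leaving enough room to set $k = \lceil 2n/p^{d-1}\rceil$ and $\ell = \lceil 4m/p^{d-1}\rceil$; a short check shows $k,\ell \leq p = p^{d-t}$, so Lemma~\ref{lemma:construction2} applies and produces sets $\cP_0, \cH_0 \subseteq \bF_p^d$ with $|\cP_0|\geq n$, $|\cH_0|\geq m$, and $I(\cP_0,\cH_0) = |\cP_0||\cH_0|/p$, where every point of $\cP_0$ lies on exactly $|\cH_0|/p$ hyperplanes of $\cH_0$.

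Next, I would bound $\rs(\cP_0,\cH_0)$. For any $K_{r,s'}$ subgraph of the incidence graph, the $s'$ hyperplanes meet in an affine subspace of some dimension $\delta \in \{0,1,\ldots,d-1\}$ that contains the $r$ points. Combining conditions (ii) and (iii) of Lemma~\ref{lemma:construction2} with the underlying evasiveness properties gives
\[ r \leq ks \cdot p^{\max(0,\delta - 1)}, \qquad s' \leq \ell s^2 \cdot p^{\max(0, d-2-\delta)}, \]
with the trivial refinements $r = 1$ at $\delta = 0$ and $s' = 1$ at $\delta = d-1$. A brief case analysis shows that the maximum of $rs'$ over $\delta$ is achieved by one of three terms: $O(s\ell p^{d-2})$ at $\delta = 0$, $O(ksp^{d-2})$ at $\delta = d-1$, and an intermediate $O(k\ell s^3 p^{d-3})$. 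Substituting $k = O(n/p^{d-1})$ and $\ell = O(m/p^{d-1})$ converts these into $O(sm/p)$, $O(sn/p)$, and $O(mn s^3/p^{d+1})$ respectively; the first two are already $O(s\eps \max\{m,n\})$, and the third becomes $O(s^3 \eps \max\{m,n\})$ after invoking $\min\{m,n\} \leq p^d/2^d$.

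Finally, I would subsample $\cP \subseteq \cP_0$ and $\cH \subseteq \cH_0$ uniformly at random to sizes $n$ and $m$. Since each point of $\cP_0$ has degree $|\cH_0|/p$ in $\cH_0$, the expected incidence count after subsampling equals $nm/p \geq \eps mn$, so some realization attains $I(\cP,\cH) \geq \eps mn$, while $\rs(\cP,\cH) \leq \rs(\cP_0,\cH_0) \leq (2s)^4 \eps \max\{m,n\}$ is automatic. The main obstacle is the uniform bound on $rs'$ across all intersection dimensions $\delta$; the middle term $k\ell s^3 p^{d-3}$ is the tightest, and the relation $p^d \geq 2^d \max\{m,n\}$ forced by our choice of $p$ is exactly what renders it harmless.
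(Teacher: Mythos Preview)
Your argument is essentially correct but takes a genuinely different route from the paper. You fix $t=d-1$ in Lemma~\ref{lemma:construction2} and choose $k,\ell$ asymmetrically to hit the target sizes $n,m$; the paper instead lets $t$ float, setting $t=\lfloor\log_p 4m\rfloor$ so that $\ell p^t\asymp m$, and takes $k=\ell$. That calibration is what makes the paper's $\rs$ bound fall out cleanly: a three-way split on whether $|A|$ or $|B|$ exceeds $s^2\ell p^{t-2}$ shows, using only conditions (ii) and (iii), that every complete bipartite piece has at most $s^4\ell p^{t-1}\le 16s^4\eps m$ edges. Your dimension-by-dimension analysis recovers the same conclusion, but at the extreme values $\delta=1$ and $\delta=d-2$ you need the facts that $\cP$ is $(1,ks)$-subspace evasive and the normal set $\cQ$ is $(2,\ell s^2)$-subspace evasive; these are proved \emph{inside} Lemma~\ref{lemma:construction2} but are not among its stated conclusions (conditions (ii) and (iii) start at $a,b\ge 1$), so you are implicitly reopening that proof rather than citing the lemma as a black box.

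One further wrinkle: your substitutions $k=O(n/p^{d-1})$ and $\ell=O(m/p^{d-1})$ are only valid once $n,m\gtrsim p^{d-1}$. If, say, $m<p^{d-1}/4$ then $\ell=1$ and your boundary term $s\ell p^{d-2}=sp^{d-2}$ no longer translates to $O(s\eps m)$; the monotonicity step $\rs(\cP,\cH)\le\rs(\cP_0,\cH_0)$ then gives nothing useful. The paper's choice of $t$ sidesteps this particular issue because $\ell p^t\asymp m$ is built into the definition of $t$ (though it has its own corner cases when $t\le 2$). In the intended regime both routes work; the paper's is a bit cleaner in that it stays within the stated interface of Lemma~\ref{lemma:construction2}.
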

\begin{proof}
For notational convenience, we assume that $\max\{m, n\}=n$, noting that the same argument applies if $\max\{m, n\}=m$. Let $p$ be a prime satisfying $\eps^{-1}/2<p\leq \eps^{-1}$, and set $t=\lfloor \log_p 4n\rfloor$ and  $\ell=\lceil \frac{4n}{p^t}\rceil\leq p$. Note that $4n\leq \ell p^t\leq 8n$. Since $p> \eps^{-1}/2\geq 2n^{1/d}$, we have $p^d> 4n$ and so $t\leq d-1$. Thus, Lemma~\ref{lemma:construction2} can be applied with $k=\ell$ to get a set of points $\cP_0$ and a set of hyperplanes $\cH_0$ with $m_0=|\cP_0|\geq \ell p^t/2$, $n_0=|\cH_0|\geq \ell p^t/4$ and $I(\cP_0, \cH_0)=\frac{1}{p} m_0n_0$, satisfying (i), (ii) and (iii).

Let us now argue that $\rs(\cP_0, \cH_0)\leq (2s)^4 \eps n$. Let $A\subseteq \cP_0, B\subseteq \cH_0$ such that every point of $A$ lies on every hyperplane in $B$. We consider three cases: either $|A|\geq s^2\ell p^{t-2}$, $|B|\geq s^2\ell p^{t-2}$ or $\max\{|A|, |B|\}< s^2\ell p^{t-2}$. In all cases, we show that \[|A|\cdot |B|\leq s^4 \ell p^{t-1} \leq 2 s^4 \eps \ell p^t \leq 16 s^4 \eps n.\]

If $|A|\geq s^2\ell p^{t-2}$, then by (ii) of Lemma~\ref{lemma:construction2}, the points of $A$ span an affine subspace of dimension at least $d-t+t-1=d-1$, which means that at most one of the hyperplanes contains them all. On the other hand, again by (ii) of Lemma~\ref{lemma:construction2}, any hyperplane can contain at most $s\ell p^{t-1}$ points of $\cP_0$ and thus $|A|\leq s\ell p^{t-1}$. In conclusion, we have $|A|\leq s\ell p^{t-1}$ and $|B|=1$, implying $|A|\cdot |B|\leq s\ell p^{t-1}$ as claimed.

The case $|B|\geq s^2\ell p^{t-2}$ is very similar. By (iii) of Lemma~\ref{lemma:construction2}, the intersection of hyperplanes of $B$ does not contain a line, meaning that $\bigcap_{H\in B} H$ is a single point. Furthermore, there are exactly $|\cH_0|/p$ hyperplanes of $\cH_0$ containing any point of $\cP_0$. Since $|\cH_0| \leq \ell s p^t$, we have $|B|\leq |\cH_0|/p\leq s\ell p^{t-1}$, and therefore $|A|\cdot |B|\leq s\ell p^{t-1}$.

Finally, if $\max\{|A|, |B|\}< s^2 \ell p^{t-2}$, we let $a$ and $b$ be the smallest integers for which $s^2\ell p^a\leq |A|<s^2\ell p^{a+1}$ and $s^2 \ell p^b\leq |B|<s^2\ell p^{b+1}$. By property (ii) of Lemma~\ref{lemma:construction2}, the points of $A$ span an at least $(d-t+a+1)$-dimensional affine space. Similarly, by property (iii) of Lemma~\ref{lemma:construction2}, the hyperplanes of $B$ intersect in at most a $(t-2-b)$-dimensional space. Since all points of $A$ belong to all hyperplanes of $B$, it must be that $d-t+a+1\leq t-2-b$, i.e. $a+b+2\leq 2t-d-1$. Thus, \[|A|\cdot |B|\leq s^4\ell^2 p^{a+b+2}\leq s^4\ell p\cdot p^{2t-d-1}\leq s^4\ell p^{t-1},\] 
where the last inequality holds by noting that $t\leq d-1$. This completes the third case.

To finish the proof, let $\cP$ be a random subset of $\cP_0$ containing $m$ points and $\cH$ be a random subset of $\cH_0$ containing $n$ hyperplanes. One should verify that $m\leq |\cP_0|$ and $n\leq |\cH_0|$, but this is clear since $|\cP_0|, |\cH_0|\geq \frac{1}{4}\ell p^t\geq n$ and $n\geq m$. We also have $\rs(\cP, \cH)\leq \rs(\cP_0, \cH_0)\leq (2s)^4\eps n$. Furthermore, the expected number of incidences between $\cP$ and $\cH$ is \[\bE\big[I(\cP, \cH)\big]=\frac{n}{n_0}\frac{m}{m_0}I(\cP_0, \cH_0)=\frac{1}{p}mn\geq \eps mn.\] Thus, there exist subsets $\cP, \cH$ of $\cP_0, \cH_0$ with at least $\eps mn$ incidences and $\rs(P, \cH)\leq (2s)^4\eps n$.
\end{proof}

\begin{lemma}\label{lemma:construction4}
Let $\eps\in (0,1)$, and let $d$ and $m\leq n$  be positive integers which satisfy  $$\eps\leq \frac{1}{4}m^{-\frac{1}{d-1}}.$$
Then there exists a prime power $q$, a set $\cP$ of $m$ points and a set $\cH$ of $n$ hyperplanes in $\bF_q^d$ such that $I(\cP, \cH)\geq \eps mn$ and $\rs(\cP, \cH)\leq (4s)^4\eps n$, where $s$ is given by Lemma \ref{lemma:construction2}.
\end{lemma}
\begin{proof}
The main idea of the proof is to take a symmetric $(d-1)$-dimensional construction described in Lemma~\ref{lemma:construction3} and ``blow it up" to obtain our desired configuration.

More precisely, we apply Lemma~\ref{lemma:construction3} to find a set of $m$ points $\cP_0$ and a set of $m$ hyperplanes $\cH_0$ in $\bF_p^{d-1}$ for some prime $p$ for which $I(\cP_0,\cH_0)\geq \eps m^2$ and $\rs(\cP_0, \cH_0)\leq (2s)^4\eps m$. The hyperplanes and points may be considered in the larger ambient space $\bF_q^{d-1}$, for any prime power $q=p^k$. Passing to $\bF_q^{d-1}$ does not change the incidence graph. In the rest of the proof, we fix $q$ to be a power of $p$ larger than $n/m$.

Now, for each $x\in \bF_q^{d-1}$, let $x'\in \bF_q^d$ be the point whose first $d-1$ coordinates are the same as $x$, and the last coordinate is $0$. Set $\cP=\{x'|x\in \cP_0\}$. Furthermore, we fix a set $S=\{s_1, \dots, s_\ell\}\subseteq \bF_q$ of size $\ell=\lceil n/m\rceil$. Then, for every hyperplane $H\in \cH_0$, if $H$ is given by the equation $\langle a, x\rangle =a_1x_1+\cdots +a_{d-1}x_{d-1} =b$, we define hyperplanes $H^{(1)}, \dots, H^{(\ell)}$ in $\bF_q^{d}$, where $H^{(i)}$ is given by the equations $a_1x_1+\cdots+a_{d-1}x_{d-1}+s_ix_d=b$. Then, we let $\cH_1=\{H^{(i)}|i\in [\ell], H\in \cH_0\}$.

The sizes of the newly constructed sets are $|\cP|=m, |\cH_1|=m\lceil \frac{n}{m}\rceil>n$. Furthermore, if $x\in \cP_0$ is incident to $H\in \cH_0$, then $x'$ is incident to all of the hyperplanes $H^{(1)}, \dots, H^{(\ell)}$. Thus, the number of incidences between $\cP$ and $\cH_1$ is $I(\cP, \cH_1)\geq \ell I(\cP_0, \cH_0)\geq \eps \ell |\cP_0||\cH_0|=\eps |\cP||\cH_1|$. Finally, the size of the largest complete bipartite graph is bounded by $$\rs(\cP, \cH_1)\leq \ell \cdot \rs(\cP_0, \cH_0)\leq \ell\cdot (2s)^4\eps m\leq (4s)^4\eps n.$$ Taking a random $m$ element subset $\cH$ of $\cH_1$, the expectation of $I(\cP,\cH)$ is at least $\eps mn$, so there is a choice for $\cH$ such that $(\cP,\cH)$ satisfies our desired conditions.
\end{proof}

\begin{lemma}\label{lemma:construction4'}
Let $\eps\in (0,1)$, and let $d$ and $m\geq n$  be positive integers which satisfy $$\eps\leq\frac{1}{4}n^{-\frac{1}{d-1}}.$$ Then there exists a prime power $q$, a set $\cP$ of $m$ points and a set $\cH$ of $n$ hyperplanes in $\bF_q^d$ such that $I(\cP, \cH)\geq \eps mn$ and $\rs(\cP, \cH)\leq (4s)^4\eps m$, where $s$ is given by Lemma \ref{lemma:construction2}.
\end{lemma}
\begin{proof}
The proof is verbatim the same as the proof of Lemma~\ref{lemma:construction4}, the only difference being that we ``blow up" points instead of hyperplanes. More precisely, we start from a set of $n$ points $\cP_0$ and a set of $n$ hyperplanes $\cH_0$ in $\bF_p^{d-1}$ for which $I(\cP_0,\cH_0)\geq \eps n^2$ and $\rs(\cP_0, \cH_0)\leq (2s)^4\eps n$ and consider them in $\bF_q^{d-1}$, where $q>m/n$.

Then, we fix a set $S=\{s_1, \dots, s_\ell\}\subseteq \bF_q$ of size $\ell=\lceil m/n\rceil$ and for each point $x\in \bF_q^{d-1}$ we define $x^{(i)}\in \bF_q^d$ be the point whose first $d-1$ coordinates are the same as $x$, and the last coordinate is $s_i$. Set $\cP_1=\{x^{(i)}|x\in \cP_0, i\in [\ell]\}$. Then, for every hyperplane $H\in \cH_0$, if $H$ is given by the equation $\langle a, x\rangle =a_1x_1+\cdots +a_{d-1}x_{d-1} =b$, we define the hyperplane $H'$ in $\bF_q^{d}$, given by the equation $a_1x_1+\cdots+a_{d-1}x_{d-1}+0\cdot x_d=b$. Then, we let $\cH=\{H'| H\in \cH_0\}$.

The sizes of the newly constructed sets are $|\cH|=n, |\cP_1|=n\lceil \frac{m}{n}\rceil>n$. Thus, by choosing a random subset $\cP\subseteq \cP_1$ and performing the same calculation as in the proof of Lemma~\ref{lemma:construction4} one arrives at the desired conclusion.
\end{proof}

\begin{lemma}\label{lemma:construction5}
Let $\eps\in (0,1)$, and let $d$ and $m, n$ be positive integers which satisfy $$4\eps\geq n^{-\frac{1}{d-2}}\text{ and } 4\eps\geq m^{-\frac{1}{d-2}}.$$ Then there exists a prime power $q$, a set $\cP$ of $m$ points and a set $\cH$ of $n$ hyperplanes in $\bF_q^d$ such that $I(\cP, \cH)\geq \eps mn$ and $\rs(\cP, \cH)\leq 2^6s^4(4\eps)^{d-1} mn$, where $s$ is given by Lemma \ref{lemma:construction2}.
\end{lemma}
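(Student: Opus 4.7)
The plan is to invoke Lemma~\ref{lemma:construction2} with $t = d-1$ and a prime $p$ close to $1/(4\eps)$, and then extract the required sets $\cP, \cH$ by random sub-sampling. The choice $t = d-1$ produces the exponent $d-1$ in the target $\rs$ bound: the subspace-evasiveness properties~(ii) and~(iii) of Lemma~\ref{lemma:construction2} combine to bound any complete bipartite subgraph $A \times B$ of $G(\cP_0, \cH_0)$ by $|A|\cdot|B| = O(s^3 k\ell p^{d-3})$, which after substituting $k \approx n/p^{d-1}$, $\ell \approx m/p^{d-1}$, and $p \gtrsim 1/\eps$ takes the shape $O(s^4 (4\eps)^{d-1} mn)$.

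Concretely, by Bertrand's postulate pick a prime $p$ with $1/(4\eps) < p \leq 1/(2\eps)$, so the construction has incidence density at least $2\eps$. Set $k = \lceil 2n/p^{d-1} \rceil$ and $\ell = \lceil 2m/p^{d-1} \rceil$; the hypothesis $4\eps \geq \max(m,n)^{-1/(d-2)}$ upper-bounds $p \leq 2\max(m,n)^{1/(d-2)}$ and allows one to verify the constraint $k, \ell \leq p^{d-t} = p$. Lemma~\ref{lemma:construction2} then produces sets $\cP_0, \cH_0 \subseteq \bF_p^d$ of sizes at least $n$ and $m$, with $I(\cP_0, \cH_0) = |\cP_0||\cH_0|/p$, satisfying the evasiveness conditions~(ii)--(iii). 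Take uniformly random subsets $\cP \subseteq \cP_0$ of size $n$ and $\cH \subseteq \cH_0$ of size $m$; since every point of $\cP_0$ lies on exactly $|\cH_0|/p$ hyperplanes of $\cH_0$, $\bE[I(\cP, \cH)] = nm/p \geq 2\eps mn$, so some deterministic choice yields $I(\cP, \cH) \geq \eps mn$.

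For the $\rs$ bound, let $A \times B$ be any complete bipartite subgraph in $G(\cP_0, \cH_0)$ and set $c := \dim\bigl(\bigcap_{H\in B} H\bigr)$. In the main regime $1 \leq c \leq d-2$, condition~(ii) applied with $a = c-1$ gives $|A| < skp^{c-1}$, while condition~(iii) applied with $b = d-2-c$ gives $|B| < s^2\ell p^{d-2-c}$; multiplying yields $|A||B| < s^3 k\ell p^{d-3}$. The boundary cases $c = 0$ and $c = d-1$ yield strictly smaller products. Substituting $k\ell = O(nm/p^{2(d-1)})$ and $1/p \leq 4\eps$ gives $\rs(\cP, \cH) \leq \rs(\cP_0, \cH_0) = O(s^3 nm/p^{d+1}) \leq O(s^3 (4\eps)^{d+1} nm) \leq 2^6 s^4 (4\eps)^{d-1} mn$, where the final step uses $\eps \leq 1$.

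The main obstacle is ensuring the constraint $k, \ell \leq p$, which translates to $n, m \leq p^d$, i.e.\ $\eps \leq O(\max(m,n)^{-1/d})$---a strictly stronger restriction than the hypothesis. In the complementary regime $\eps > \max(m,n)^{-1/d}$, one must either pad the saturated Lemma~\ref{lemma:construction2} construction (with $k = \ell = p$) by isolated vertices, or invoke a blow-up from a lower-dimensional variant analogous to Lemmas~\ref{lemma:construction4} and~\ref{lemma:construction4'}. Either way, the hypothesis $4\eps \geq \max(m,n)^{-1/(d-2)}$ is used essentially to bridge the gap between the achievable density and the target $\rs$ bound.
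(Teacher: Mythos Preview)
Your direct approach via Lemma~\ref{lemma:construction2} with $t=d-1$ has a genuine gap, which you yourself flag in the final paragraph: the constraint $k,\ell\le p^{d-t}=p$ required by Lemma~\ref{lemma:construction2} is \emph{not} guaranteed by the hypothesis. Your second paragraph claims that the bound $p\le 2\max(m,n)^{1/(d-2)}$ ``allows one to verify'' $k,\ell\le p$, but this is backwards: to get $k=\lceil 2n/p^{d-1}\rceil\le p$ you need a \emph{lower} bound on $p^d$, namely $p^d\ge 2n$, and the hypothesis $4\eps\ge n^{-1/(d-2)}$ only gives $n\ge(4\eps)^{-(d-2)}$, a lower bound on $n$ with no upper bound whatsoever. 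So the moment $n$ (or $m$) exceeds roughly $(4\eps)^{-d}$, your call to Lemma~\ref{lemma:construction2} is illegal. The ``padding with isolated vertices'' fix you mention does not work either: with $k=\ell=p$ the construction has only $\Theta(p^{2d-1})\approx\eps^{-(2d-1)}$ incidences, which is far short of $\eps mn$ when $m,n$ are large.

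The paper takes precisely the second route you name but do not execute: it applies Lemma~\ref{lemma:construction3} in dimension $d-2$ with $N=(4\eps)^{-(d-2)}$ points and hyperplanes (the hypothesis $4\eps\ge\max(m,n)^{-1/(d-2)}$ is exactly what makes $N\le m,n$), obtaining $\rs(\cP_0,\cH_0)\le(2s)^4\eps N$, and then blows up each point into $\lceil n/N\rceil$ points and each hyperplane into $\lceil m/N\rceil$ hyperplanes inside $\bF_q^d$ via two extra coordinates. This blow-up multiplies $\rs$ by $\lceil n/N\rceil\lceil m/N\rceil$, yielding $\rs\le 2^6s^4\eps\cdot mn/N=2^6s^4(4\eps)^{d-1}mn$ cleanly, with no case analysis on $\dim\bigcap_{H\in B}H$ needed. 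A minor secondary issue in your sketch: the range $1\le c\le d-2$ you call the ``main regime'' is not fully covered by properties~(ii) and~(iii), since~(ii) requires $a=c-1\ge1$ and~(iii) requires $b=d-2-c\ge1$, so the cases $c=1$ and $c=d-2$ need separate (though routine) treatment.
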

\begin{proof}
The proof of this lemma is somewhat similar to the proofs of Lemmas~\ref{lemma:construction4} and \ref{lemma:construction4'}, since we  use the idea of ``blowing-up" a lower-dimensional construction. However, in this case we use a $(d-2)$-dimensional configuration as a starting point and we ``blow-up" both points and hyperplanes.

Let $p$ be a prime between $\eps^{-1}/2$ and $\eps^{-1}$, and set $N=(4\eps)^{-(d-2)}$. We remark that the inequalities $N\leq m, n$ are satisfied. Note that the condition of Lemma~\ref{lemma:construction3} is also satisfied with $d-2$ instead of $d$, and $N$ instead of $m$ and $n$. Therefore, we can find sets $\cP_0, \cH_0$ of points and hyperplanes in $\bF_p^{d-2}$, both of size $N$, and satisfying $I(\cP_0, \cH_0)\geq \eps N^2$ and $\rs(\cP_0, \cH_0)\leq (2s)^4\eps N$.

As in the proof of Lemma~\ref{lemma:construction4}, we consider $\cP_0, \cH_0$ in $\bF_q^{d-2}$ instead of $\bF_p^{d-2}$, where $q$ is the power of $p$ satisfying $q>\max\{\frac{m}{N}, \frac{n}{N}\}$. Furthermore, we fix two sets $S, T\subseteq \bF_q$, such that $S=\{s_1, \dots, s_k\}$ and $T=\{t_1, \dots, t_\ell\}$, where $k=\lceil \frac{m}{N}\rceil, \ell=\lceil \frac{n}{N}\rceil$. To every point $x\in \cP_0$, we associate $k$ points of $\bF_q^d$ as follows. For $i\in [k]$, let $x^{(i)}\in \mathbb{F}_q^{d}$  be equal to $x$ on the first $d-2$ coordinates, equal to $s_i$ on the coordinate $d-1$, and equal to $0$ on the last coordinate. Similarly, for a hyperplane $H\in \cH_0$ given by the equation $a_1 x_1+\cdots +a_{d-2}x_{d-2}=b$, we associate the hyperplanes $H^{(j)}$ in $\bF_q^{d}$ for $j\in [\ell]$ defined by the equations $a_1 x_1+\cdots +a_{d-2}x_{d-2}+0\cdot x_{d-1}+t_j x_d=b$. Finally, we set $\cP_1=\{x^{(i)}|i\in [k], x\in \cP_0\}$ and $\cH_1=\{H^{(j)}|H\in \cH_0, j\in [\ell]\}$.

If $x\in H$, then it is easy to check that $x^{(i)}\in H^{(j)}$ for all $i\in [k], j\in [\ell]$, and no other type of incidences emerge in $(\cP_1,\cH_1)$. Thus, $I(\cP_1, \cH_1)\geq k\ell I(\cP_0, \cH_0)\geq \eps k|\cP_0|\ell |\cH_0|=\eps |\cP_1||\cH_1|$. Furthermore, $$\rs(\cP_1, \cH_1)\leq k\ell \rs(\cP_0, \cH_0)\leq \frac{4mn}{N^2}\cdot (2s)^4\eps N= 2^6s^4\eps \frac{mn}{N}.$$ Recalling that $N=(4\eps)^{-(d-2)}$, we obtain $\rs(\cP_1, \cH_1)\leq 2^6s^4(4\eps)^{d-1}mn$. Subsampling a random $m$ element subset $\cP$ of $\cP_1$, and a random $n$ element subset $\cH$ of $\cH_1$, completes the proof.
\end{proof}

\begin{proof}[Proof of Theorem~\ref{thm:lower_2}.]
We claim that it suffices to take $C=4^{d+4}s^4=O((4d)^{4d})$. Consider three cases.

\begin{description}
    \item[Case 1.] $\eps\geq \frac{1}{4}\max\{m, n\}^{-1/d}$ and $\eps\geq \frac{1}{4}\min\{m, n\}^{-1/(d-1)}$.
 
    Then  $4\eps >m^{-1/(d-2)}$ and  $4\eps>n^{-1/(d-2)}$ are also satisfied, so Lemma~\ref{lemma:construction5} applies. Therefore, we have $\rs_d(m, n, \eps mn)\leq C\eps^{d-1}mn$.

    \item[Case 2.] $\eps\leq \frac{1}{4}\max\{m, n\}^{-1/d}$.

    In this case, Lemma~\ref{lemma:construction3}  shows that $\rs_d(m, n, \eps mn)\leq (2s)^4 \eps \max\{m, n\}\leq C\eps \max\{m, n\}$.

    \item[Case 3.] $\eps\leq \frac{1}{4}\min\{m, n\}^{-1/(d-1)}$.
    
    We apply either Lemma~\ref{lemma:construction4} or Lemma~\ref{lemma:construction4'}, depending on whether $m\geq n$ or $n\geq m$. This shows that $\rs_d(m, n, \eps mn)\leq (4s)^4 \eps \max\{m, n\}\leq C\eps \max\{m, n\}$.
\end{description}
\end{proof}

\section{Point-variety incidences}\label{sec:point-variety incidences}

\subsection{Basics of algebraic geometry}\label{sec:alg geo background}

Much of the following material is based on the Appendix of \cite{DGW} and the excellent book of Cox, Little and O'Shea \cite{CLO}.

Let $\oF$ be the algebraic closure of the field $\bF$. We denote by $\oF[x_1, \dots, x_D]$ the set of polynomials in variables $x_1, \dots, x_D$ with coefficients in $\oF$. An \textit{affine algebraic set} $V$ is a set of common zeros of polynomials $f_1, \dots, f_k\in \oF[x_1, \dots, x_D]$ in $\oF^D$, and this set is denoted by $V(f_1, \dots, f_k)$. Formally, \[V(f_1, \dots, f_k)=\left\{(a_1, \dots, a_d)\in \oF^D: f_i(a_1, \dots, a_d)=0\text{ for all }i\in [k]\right\}.\]

In a similar way, one can define projective algebraic sets. In this case, the ambient projective space $\bP^D(\oF)$ over the field $\oF$ is defined as the set of equivalence classes of points in $\oF^{D+1}\backslash\{(0, \dots, 0)\}$, where $(a_0, \dots, a_{D})\sim (b_0, \dots, b_D)$ if $a_i=\lambda b_i$ for every $i$ for some $\lambda\in \oF$. Note that for every homogeneous polynomial $f\in \oF[x_0, \dots, x_D]$ and every $\lambda\neq 0$, we have $f(a_0, \dots, a_D)=0$ if and only if $f(\lambda a_0, \dots, \lambda a_D)=0$. Hence, the set of zeros of a homogeneous polynomial in $\bP^D(\oF)$ is well-defined. Thus, we can define a \textit{projective algebraic set} determined by homogeneous polynomials $f_1, \dots, f_k\in \oF[x_0, \dots, x_D]$ as the common zero set of these polynomials.

One should think of the projective space as the ``completion" of the affine space, in which one adds certain points at infinity. Hence, to every affine algebraic set one can uniquely associate a projective variety which extends it. Formally, one can define a homogenization $f^h$ of the degree $d$ polynomial $f\in \oF[x_1, \dots, x_D]$ by setting $f^h(x_0, \dots, x_D)=x_0^df(x_1/x_0, \dots, x_D/x_0)$. Then, an affine algebraic set $V(f_1, \dots, f_k)\subseteq \oF^D$ extends to the projective algebraic set defined by polynomials $f_1^h, \dots, f_k^h$. 

Finite unions and arbitrary intersections of algebraic sets are also algebraic. An algebraic set $V$ is \textit{irreducible} if there are no algebraic sets $V_1, V_2\subsetneq V$ for which $V_1\cup V_2=V$, and an irreducible algebraic set is called a \textit{variety}. The decomposition theorem for algebraic sets states that there is a unique way to express an algebraic set as the finite union of varieties, which are then called the \textit{components} of the algebraic set (see Section 6 of Chapter 4 in \cite{CLO}).

Given an affine algebraic set $Y\subset \oF^D$, the \emph{ideal of $Y$}, denoted by $I(Y)$, is defined as the set of polynomials $f\in \oF[x_1, \dots, x_D]$ that vanish on $Y$. Clearly, if $g,h\in \oF[x_1, \dots, x_D]$ are polynomials, then the restrictions of $g$ and $h$ on $Y$ are identical if and only if  $g-h\in I(Y)$. Therefore, the space of polynomial functions on $Y$ is isomorphic to the quotient $\oF[x_1, \dots, x_D]/I(Y)$. One can analogously define the ideal of a projective algebraic set $Y\subseteq \bP^D(\oF)$, denoted by $I(Y)$, as the set of all homogeneous polynomials in $\oF[x_0, \dots, x_D]$ vanishing on $Y$.

Next, we define the dimension and the degree of a variety. We give two definitions of the dimension of a variety (for the proof of their equivalence, see Theorem I.7.5 in \cite{H}). Both of these definitions will be useful in our proofs. According to the first definition, the \textit{dimension} of the variety $V$ is the maximum value of $r$ for which there exists a chain of varieties $V_0, V_1, \dots, V_r$ such that $\emptyset=V_0\subsetneq V_1\subsetneq \dots\subsetneq V_r\subsetneq V$. 

Next, we give a definition the dimension based on the ideal $I(V)$. Fix a projective variety $V\subseteq \bP^D(\oF)$ and let $\oF_{t}[x_0, x_1, \dots, x_D]$ be the space of homogeneous polynomials of degree $t$, which is a finite-dimensional vector space over $\oF$. Further, let $I_{t}=I(V)\cap \oF_{t}[x_0, x_1, \dots, x_D]$ be the space of those homogeneous polynomials in $I(V)$ which have degree $t$. Since $I_{t}$ is a subspace of $\oF_{t}[x_0, x_1, \dots, x_D]$, one may consider the dimension of the quotient $\dim \big(\oF_{t}[x_0, x_1, \dots, x_D]/I_{t}\big)$, which is called the \textit{Hilbert function} of $I$. By Hilbert's theorem, the function $t\mapsto\dim \big(\oF_{t}[x_0, \dots, x_D]/I_{t}\big)$ is a polynomial for $t$ sufficiently large and it is called the \emph{Hilbert polynomial} of $I(V)$ (see e.g. Proposition 3 on p. 487. in \cite{CLO}). The \emph{dimension} of the variety $V$, denoted by $\dim(V)$, is then the degree of the Hilbert polynomial associated to $I(V)$. Furthermore, the \emph{degree} of the variety, denoted by $\deg V$, is defined as $(\dim V)!$ times the leading coefficient of this polynomials. Finally, the dimension and the degree of the affine variety are simply the dimension and the degree of the associated projective variety.

To get some intuition for these concepts, we remark that if $V=V(f)$ is a variety in $D$-dimensional space defined by a single irreducible polynomial $f$, then the dimension of $V$ is $D-1$ and the degree of $V$ is $\deg f$.

To conclude this section, let us mention two important results that we use in our proofs. The first one concerns the intersections of varieties whose degree and dimension are known and holds for affine and projective varieties alike (for a reference, see Example 8.4.6 in \cite{F}).

\begin{theorem}\label{thm:preliminary}
Let $V_1$ and $V_2$ be algebraic varieties of dimensions $d_1\leq d_2$ in a projective or affine ambient space of dimension $D$ such that $V_1\not\subseteq V_2$. If $Z_1, \dots, Z_k$ are the irreducible components of $V_1\cap V_2$, then $\dim(Z_i)\leq d_1-1$ for all $i$ and $\sum_{i=1}^k \deg(Z_i)\leq \deg V_1\cdot \deg V_2$. 
\end{theorem}

The second statement we use is a uniform version of Hilbert's theorem, which can be used to bound the Hilbert function (for a reference, see e.g. Chapter 9 of \cite{NA}). 

\begin{theorem}\label{thm:uniform_hilbert}
Let $V$ be a projective variety and let $I=I(V)$ be the associated ideal. Then, for every positive integer $t$,  $\dim \big(\oF_{t}[x_0, x_1, \dots, x_D]/I_{t}\big)\leq \deg(V) t^{\dim V}+\dim V.$
\end{theorem}

\subsection{Proofs of upper bounds via induced Tur\'an problems}

In this section, we prove Theorem \ref{thm:varieties upper} through the following approach. We construct a bipartite graph $H_{d, \Delta}$ with the property that $G(\cP, \cV)$ does not contain an induced copy of $H_{d, \Delta}$. Then, we show that any $K_{s, s}$-free bipartite graph with sides of size $m, n$ which does not contain an induced copy of $H_{d, \Delta}$ has at most $O(m^{\frac{d}{d+1}} n)$ edges.

Let us begin by describing the forbidden induced bipartite subgraph $H=H_{d, \Delta}$. Let $A$ and $B$ be the parts of $H_{d,\Delta}$, and $k=2^{\Delta^d}+1$. Part $B$ consists of $d+1$ ``layers" of vertices, which we describe as follows. The first two layers contain one vertex each, denoted by $v_1$ and $v_2$, respectively. For $3\leq \ell\leq d+1$, the $\ell$-th layer has $k^{\ell-2}$ vertices, denoted by $v_\ell^{(i_3, \dots, i_\ell)}$, where $i_3, \dots, i_\ell\in [k]$.

Next, we describe part $A$. For each $3\leq \ell\leq d+1$ and each sequence $(i_3, \dots, i_\ell)\in [k]^{\ell-2}$, we add $k$ vertices $w_{\ell,1}^{(i_3,\dots,i_{\ell})},\dots,w_{\ell,k}^{(i_3,\dots,i_{\ell})}$ to $A$ whose neighbours are $v_1, v_2, v_3^{(i_3)}, v_4^{(i_3, i_4)}, \dots, v_\ell^{(i_3, \dots, i_\ell)}$. Note that every vertex of $A$ has degree at most $d+1$. Furthermore, observe that vertices $v_{t}^{(i_3, \dots, i_t)}$ and $v_r^{(j_3, \dots, j_r)}$ have a common neighbour if and only if one of the sequences $(i_3,\dots, i_t)$ and $(j_3, \dots, j_r)$ is a prefix of the other.

\begin{proposition}\label{prop:forbidden induced subgraph}
If $\cP$ is a set of points and $\cV$ is a set of $d$-dimensional varieties of degree at most $\Delta$, the incidence graph $G(\cP, \cV)$ does not contain $H_{d, \Delta}$ as an induced subgraph, where the vertices of $A$ correspond to points and the vertices of $B$ correspond to varieties.
\end{proposition}
\begin{proof}
Assume that $G(\cP,\cV)$ contains an induced copy of $H_{d,\Delta}$ and let  $V_\ell^{(i_3, \dots, i_\ell)}$ be the variety corresponding to the vertex $v_\ell^{(i_3, \dots, i_\ell)}$. We show by induction on $2\leq \ell\leq d+1$ that one can choose a sequence $i_3,\dots,i_{\ell}$  with the following property. Let $Z_1, \dots, Z_t$ be the irreducible components of the intersection $V_1\cap V_2\cap\dots\cap V_\ell^{(i_3, \dots, i_\ell)}$ that satisfy $\dim(Z_i)\leq d-\ell+1$. Then the union $\bigcup_{i=1}^t Z_i$ contains all points corresponding to the common neighbours of $v_1, \dots, v_\ell^{(i_3, \dots, i_\ell)}$ in $A$.

For $\ell=2$, this statement is obvious, since all components of $V_1\cap V_2$ have dimension at most $d-1$ by Theorem~\ref{thm:preliminary}, noting that $V_1$ and $V_2$ are distinct varieties of dimension $d$. To show the inductive step, suppose we found a sequence $i_3,\dots,i_{\ell}$  satisfying the above property. Let $Z_1, \dots, Z_t$ be the irreducible components of the intersection of these varieties satisfying $\dim(Z_i)\leq d-\ell+1$. By iterated application of Theorem~\ref{thm:preliminary}, one can deduce that $t\leq \Delta^\ell$.

For each variety $V_{\ell+1}^{(i_3, \dots, i_\ell, j)}$, $j\in \{1, \dots, k\}$, let us denote by $T_j$ the set of components among $Z_1, \dots, Z_t$ which are fully contained in $V_{\ell+1}^{(i_3, \dots, i_\ell, j)}$. As $k=2^{\Delta^d}> 2^t$, there exist two sets $T_{j}$ and $T_{j'}$ which are identical. In other words, the varieties $V_{\ell+1}^{(i_3, \dots, i_\ell, j)}$ and $V_{\ell+1}^{(i_3, \dots, i_\ell, j')}$ contain the exact same components $Z_i$. We claim that the sequence $i_3,\dots,i_{\ell},j$ then satisfies the required conditions and suffices to perform the inductive step.

To see this, note that $V_{\ell+1}^{(i_3, \dots, i_\ell, j)}$ intersects all components $Z_i\notin T_j$ in subvarieties of dimension at most $d-\ell$ (or in the empty set). Thus, to complete the inductive step, it suffices to show that all common neighbours of $v_1, \dots, v_{\ell+1}^{(i_3, \dots, i_\ell, j)}$ are contained in the components $Z_i\notin T_j$. If there is a common neighbour $P$ of these vertices in a component $Z_i\in T_j$, it belongs to the variety $V_{\ell+1}^{(i_3, \dots, i_\ell, j')}$, and thus it is connected to the vertex $v_{\ell+1}^{(i_3, \dots, i_\ell, j')}$. But this is impossible, since $v_{\ell+1}^{(i_3, \dots, i_\ell, j)}$ and $v_{\ell+1}^{(i_3, \dots, i_\ell, j')}$ have no common neighbours in $H_{d,\Delta}$.

Taking $\ell=d+1$, we get a sequence $i_3,\dots,i_{d+1}$ such that $V_1,\dots, V_{d+1}^{(i_3,\dots,i_{d+1})}$ has the following property. If $Z_1,\dots,Z_t$ are the $0$-dimensional components of  $V_1\cap\dots\cap V_{d+1}^{(i_3,\dots,i_{d+1})}$, then $\bigcup_{i=1}^{t}Z_i$ contains $k$ points, corresponding to the common neighbours of $v_1,\dots,v_{d+1}^{(i_3,\dots,i_{d+1})}$. However, note that an iterated application of Theorem~\ref{thm:preliminary} implies $t\leq \Delta^{d+1}$ and so $t<k$. But this is a contradiction, since each $Z_i$ is a single point.
\end{proof}

Next, given a bipartite graph $H$, we prove a general upper bound on the number of edges in a $K_{s,s}$-free and induced $H$-free bipartite graph. A  similar statement was proved recently by Hunter and the authors of this paper \cite{HMST}, but their result is not directly applicable in case the host graph is bipartite with parts of unequal sizes. Fortunately, we can reuse most of the key auxiliary lemmas to adapt to this scenario.

Let $H=(A,B,E)$ be a bipartite graph such that every vertex in $A$ has degree at most $k$. Given a graph $G$, we say that a set of vertices $S\subset V(G)$ is $(H,k,s)$-rich if for every $T\subset S$, $|T|\leq k$, we have 
$$|\{v\in V(G)\backslash S:N(v)\cap S=T\}|\geq (4|A|s)^{|A|}.$$
We need the following two results from \cite{HMST}.

\begin{lemma}[Lemma 2.3 in \cite{HMST}]\label{lemma:HMST2.3}
    Let $G$ be a graph not containing $K_{s,s}$. If $G$ contains an $(H,k,s)$-rich independent set $S$ of size $|B|$, then $G$ contains $H$ as an induced subgraph in which $B$ is embedded into $S$.
\end{lemma}

\begin{proposition}[Proposition 2.4 in \cite{HMST}]\label{prop:HMST2.4}
    Let $G$ be a $K_{s,s}$-free graph and $X\subseteq V(G)$ a set of at least $(4|A||B| s)^{4|B|+10}$ vertices in which every $k$-tuple of vertices has at least $(4|A||B| s)^{2 |V(H)|}$ common neighbours. Then $X$ contains an $(H,k,s)$-rich independent set of size $|B|$. 
\end{proposition}

Furthermore, we use the following simple statement about independent sets in hypergraphs.

\begin{proposition}\label{prop:hyp_ind}
    Let $\cH$ be a $k$-uniform hypergraph with $N$ vertices and $M$ edges. Then $\cH$ contains an independent set of size at least $N^{\frac{k}{k-1}}/(4(M+N)^{1/(k-1)})$.
\end{proposition}

\begin{proof}
Let $p=(N/2(M+N))^{1/(k-1)}$. Let $X\subseteq V(\cH)$ be a random sample in which each vertex is included independently with probability $p$. Then $\cH[X]$ contains an independent set of size at least $|X|-e(\cH[X])$, as we can remove a single vertex from every edge to get an independent set. But $\mathbb{E}(|X|-e(\cH[X]))=pN-p^k M\geq \frac{pN}{2}$, so there is a choice for $X$ such that $|X|-e(\cH[X])\geq pN/2\geq N^{\frac{k}{k-1}}/(4(M+N)^{1/(k-1)})$.
\end{proof}

Now we are ready to prove our bound, which is a simple combination of the previous two results and the dependant random choice method \cite{FS}.

\begin{lemma}\label{lemma:induced_free}
Let $G=(U,V;E)$ be a bipartite graph, $|U|=m$ and $|V|=n$, and let $H=(A,B;E)$ be a bipartite graph such that every vertex in $A$ has degree at most $k\geq 2$. If $G$ contains no induced copy of $H$ in which $A$ is embedded to $U$, and $G$ is $K_{s,s}$-free, then
$$E(G)=O_{H,s}\left(m^{\frac{k-1}{k}}n+m\right).$$
\end{lemma}

\begin{proof}
Assume $e(G)\geq K(m^{\frac{k-1}{k}}n+m)$, where $K=K(H,s)$ is specified later. Let $t=(4|A||B| s)^{2 |V(H)|}$ and $z=(4|A||B| s)^{4|B|+10}$ be constants depending only on $H$ and $s$. We show that if $G$ contains no $K_{s,s}$, then $G$ contains an induced copy of $H$ in which $A$ is embedded to $U$. 

Let $u$ be a vertex in $U$, and let $X_0=N_G(u)$. Define the $k$-uniform hypergraph $\mathcal{H}$ on vertex set $X_0$ such that a $k$-element set $D\subset X_0$ is an edge if $D$ has at most $t$ common neighbours in $G$. First, we show that there is a choice for $u$ such that $\cH$ contains an independent set of size at least $z$. By Proposition \ref{prop:hyp_ind}, there is independent set of size at least $|X_0|^{\frac{k}{k-1}}/(4(|X_0|+e(\cH))^{\frac{1}{k-1}})$. Hence, it is enough to show that there exists a choice of  $u\in U$ for which $|X_0|^{k}>(4z)^{k-1} (|X_0|+e(\cH))$.

Choose $u$ uniformly at random from $U$. Then $\bE[|X_0|]=\frac{e(G)}{m}\geq K \frac{n}{m^{1/k}}+K$. On the other hand, for any fixed $k$ element set $D\subset V$ with less than $t$ common neighbours, we have $\mathbb{P}(D\subset X_0)\leq \frac{t}{m}.$ Since there are at most $n^{k}$ such $k$ element sets in $V$, we conclude that $\bE[e(\cH)]\leq \frac{t n^{k}}{m}.$ By Jensen's inequality, we have $\bE[|X_0|^{k}]\geq \bE[|X_0|]^{k}\geq K^{k} \frac{n^{k}}{m}+K^k$, and therefore $$\bE[|X_0|^{k}-(4z)^{k-1}(e(\cH)+|X_0|)]>0$$ when $K$ is sufficiently large. Hence, there exists a choice of $u$ for which $\cH$ contains an independent set of size $z$. Fix such a choice, and let $X\subset X_0$ be such an independent set.

As every $k$ element subset of $X$ has at least $t$ common neighbors, we can apply Proposition \ref{prop:HMST2.4} to find an $(H,k,s)$-rich set $S\subset X$ of size $|B|$. But then, by Lemma~\ref{lemma:HMST2.3}, there is an embedding of $H$ in which $B$ is embedded to $S$, finishing the proof.
\end{proof}

\begin{proof}[Proof of Theorem \ref{thm:varieties upper}]
By Proposition \ref{prop:forbidden induced subgraph}, the incidence graph $G(\cP,\cV)$ does not contain an induced copy of the bipartite graph $H_{d,\Delta}$, where part $A$ is corresponds to points. Also, every vertex of $A$ has degree at most $d+1$, so applying Lemma \ref{lemma:induced_free} with $k=d+1$ and $H=H_{d,\Delta}$ gives the desired bound.
\end{proof}

\subsection{Proofs of upper bounds via VC-dimension}

In this section, we show a variant of Theorem~\ref{thm:varieties upper} using a different approach. Note that we use a slightly stronger assumption that the varieties have bounded description complexity, instead of just bounded degree. However, since the description complexity can be bounded by a function of the degree and the ambient dimension, this assumption does not make a big difference (see e.g. Theorem 2.1.16 in \cite{T}). 

We say that the variety $V$ has \textit{description complexity} at most $t$ if it can be defined using at most $t$ polynomials $f_1, \dots, f_t\in \bF[x_1, \dots, x_D]$ of degree at most $t$.

\begin{theorem}\label{thm:varieties upper variant}
Let $\cP$ be a set of $m$ points in $\bF^D$ and let $\cV$ be a set of $n$ varieties in $\bF^D$, each of dimension $d$ and description complexity at most $t$. If the incidence graph $G(\cP, \cV)$ is $K_{s, s}$-free, then \[I(\cP, \cV)\leq O_{D,t, s}(m^{\frac{d}{d+1}} n+m).\]
\end{theorem}

Let us prepare the proof of this theorem. First, we recall some basic notions from the theory of VC-dimesion. Let $X$ be a ground set and let $\cF$ be a family of subsets of $X$. The \textit{shatter function} of the system $\cF$, denoted by $\pi_\cF(k)$, is defined as the maximum number of distinct intersections of a $k$-element set with members of $\cF$. Formally,
$$\pi_\cF(k)=\max_{A\subset X, |A|=k} |\{A\cap B:B\in \cF\}|.$$
For a sequence of varieties $V_1,\dots,V_k$ and a point $x$ in $\bF^D$, we define the \textit{containment pattern} of $V_1,\dots,V_k$ at $x$ as the set of indices $I\subseteq [k]$ of those varieties $V_i$ which contain $x$, that is $I=\{i\in [k]:x\in V_i\}$. Our first key lemma is an extension of a celebrated result of R\'onyai, Babai and Ganapathy \cite{RBG}  on the number of zero-patterns of polynomials. This statement, which might be of independent interest, implies that the family of possible containment-patterns has a polynomial shatter function.

\begin{lemma}\label{lemma:variety_zero_pattern}
Let $V_1,\dots,V_k$ be a sequence of varieties in $\bF^D$, each of dimension $d$ and description complexity at most $t$. The number of distinct containment patterns of $V_1,\dots,V_k$ is at most $O_{D, t}(k^{d+1})$.
\end{lemma}

The proof of Lemma~\ref{lemma:variety_zero_pattern} combines the ideas from \cite{RBG} with Hilbert's theorem. Hence, before presenting the proof, let us recall the result of R\'onyai, Babai and Ganapathy \cite{RBG}. Using our terminology, given sequence of polynomials $f_1, \dots, f_k\in \bF[x_1, \dots, x_D]$ and point $x\in \bF^D$, the \textit{zero-pattern} of $f_1,\dots,f_k$ at $x$ is the set of indices $I\subset [k]$ defined as $I=\{i\in [k]:f_i(x)=0\}$. If $\mathcal{Z}(f_1, \dots, f_k)$ is the set of all zero-patterns of $f_1, \dots, f_k$ and $f_1, \dots, f_k$ are polynomials of degree at most $\Delta$, then the number of distinct zero-patterns is at most 
\begin{equation}\label{eqn:zero-patterns polynomials}
    |\mathcal{Z}(f_1,\dots, f_k)|\leq  \binom{k\Delta}{D}.
\end{equation}
Note that (\ref{eqn:zero-patterns polynomials}) can be used as a black box to derive a weaker version of Lemma~\ref{lemma:variety_zero_pattern}, since every containment patterns of $V_1, \dots, V_k$ corresponds to a zero-patterns of their defining polynomials. However, this argument would lead to the weaker upper bound $O_t(k^D)$ in Lemma \ref{lemma:variety_zero_pattern}.

\begin{proof}[Proof of Lemma~\ref{lemma:variety_zero_pattern}.]
We work in the algebraic closure $\oF$ of the field $\bF$, which can only increase the number of containment patterns. We show that for every $a\in [k]$, the number of containment patterns $I\subset [k]$ with $a\in I$, i.e. the patterns coming from points $x\in V_a$, is at most $O_{t,D}(k^d)$. Then we are clearly done. Without loss of generality, let $a=1$, and let $M$ be the number of containment patterns $I\subseteq [k]$ with $1\in I$.

For $i=2,\dots,k$, let the defining polynomials of the variety $V_i$ be $f_{i,1}, \dots, f_{i,t}\in \oF[x_1, \dots, x_D]$, where we may repeat some of the polynomials to ensure that we have exactly $t$ polynomials. Since every containment pattern corresponds to a unique zero-pattern of polynomials $f_{2,1}, \dots, f_{k,t}$, we have $M\leq N$, where $N$ is the number of zero-patterns of $f_{2,1}, \dots, f_{k,t}$ on the variety $V_1$. Let $\widetilde{f}_{i,j}$ be the polynomial function induced on $V_1$ by the polynomial $f_{i, j}$, i.e. $\widetilde{f}_{i,j}=f_{i,j}\mod I(V_1)$, where we recall that $I(V_1)$ is the ideal of the variety $V_1$ (see Section~\ref{sec:alg geo background}).

Let $x_1, \dots, x_N\in V_1$ be points witnessing the $N$ distinct zero-patterns, and for $j\in [N]$, let $S_j\subset \{\widetilde{f}_{2,1},\dots,\widetilde{f}_{k,t}\}$ be the set of polynomial $\widetilde{f}$ for which $\widetilde{f}(x_j)\neq 0$.  Define the polynomial $$g_j(x)=\prod_{\widetilde{f}\in S_j} \widetilde{f}(x).$$ 
Clearly, we have $g_{j}(x_j)\neq 0$, and $$\deg g_j\leq \sum_{i,\ell} \deg \widetilde{f}_{i,\ell} \leq kt^2.$$ Furthermore, note that $g_{j}(x_{\ell})=0$ if $S_{j}\not\subset S_{\ell}$, since for every $\widetilde{f}\in S_{j}\setminus S_{\ell}$, we have $\widetilde{f}(x_{\ell})=0$.

We now argue that the polynomials $g_1,\dots,g_N$ are linearly independent. Otherwise, there exist scalars $\lambda_1, \dots, \lambda_N\in \oF$, not all zero, for which  \[\sum_{j=1}^n \lambda_j g_j=0.\] Choose $\ell$ such that $\lambda_{\ell}\neq 0$ and $|S_{\ell}|$ is minimal. If we plug  $x=x_{\ell}$ into the above equation, all terms except $g_{\ell}(x_{\ell})$ vanish. Indeed, for all $j\neq \ell$ we either have $\lambda_j=0$ or $S_j\not\subset S_{\ell}$, meaning that $g_j(x_{\ell})=0$. But $g_{\ell}(x_{\ell})\neq 0$, which is a contradiction. Thus, the polynomials $g_1, \dots, g_N$ are linearly independent. But the dimension of the space of polynomials of degree at most $kt^2$ on the variety $V_1$ is bounded by $O_{t, D}(k^{d})$ by Theorem~\ref{thm:uniform_hilbert}, and so $M\leq N\leq O_{t, D}(k^d)$. 
\end{proof}

In the proof of Theorem \ref{thm:varieties upper variant}, we also use the following result of \cite{FPSSZ}, whose proof follows from the Haussler packing lemma \cite{Haussler}. 

\begin{theorem}[Theorem 2.1 from \cite{FPSSZ}]\label{thm:zarankiewicz bounded VC}
Let $c>0$ and $r,s\in \mathbb{N}$, then there exists $c_1=c_1(c,D,s)>0$ such that the following holds. Let $G=(A, B, E)$ be a bipartite graph with $|A|=m$ and $|B|=n$ such that the set system $\cF_1=\{N(q):q\in A\}$  satisfies $\pi_{\cF_1}(k)\leq ck^r$ for every positive integer $k$. Then, if $G$ is $K_{s, s}$-free, we have 
\[|E(G)|\leq c_1(nm^{1-1/r} +m).\]
\end{theorem}

\begin{proof}[Proof of Theorem \ref{thm:varieties upper variant}]
Let $\cV=\{V_1,\dots,V_n\}$, and let $\cF$ be the set of containment patterns of $V_1,\dots,V_n$. Then $\pi_{\cF}(k)=O_{D,t}(k^{d+1})$ by Lemma \ref{lemma:variety_zero_pattern}. For every $x\in \cP$, the neighborhood $N(x)$ corresponds to the containment pattern $I_x=\{i\in [n]:x\in V_i\}$. Therefore, if $\cF_1=\{N(x):x\in \cP\}$, then $\pi_{\cF_1}(k)\leq \pi_{\cF}(k)=O_{D,t}(k^{d+1})$. Applying Theorem \ref{thm:zarankiewicz bounded VC} with $r=d+1$, we get $$I(\cP,\cV)=O_{D,t,s}(n m^{\frac{d}{d+1}}+m).$$
\end{proof}

\section{Unit distances}\label{sec:unit distances}

\subsection{Geometry of spheres over arbitrary fields}

In this section, we discuss basic properties of spheres in finite fields, highlighting some differences with the real space. Throughout this section, we consider a non-degenerate bilinear form $\langle \cdot, \cdot \rangle:\bF^d\to \bF$, with respect to which we define the notions of orthogonality and distance. The form is non-degenerate if for every $v\neq 0$ there exists some $w$ such that  $\langle v, w \rangle\neq 0$. It is standard to associate a norm to a bilinear form by defining $\|v\|^2=\langle v, v\rangle$. In what follows, a \emph{unit sphere} with center $w$ is defined to be the set of points $x\in \bF^d$ for which $\|x-w\|^2=1$.

Furthermore, we say that vectors $u, v\in \bF^d$ are \textit{orthogonal} if $\langle u, v\rangle=0$. One can generalize the notion to affine flats and say that $U, V\subseteq \bF^d$ are \textit{orthogonal} if for all points $u, u'\in U$ and $v, v'\in V$ one has $\langle u-u', v-v'\rangle =0$. A key property of orthogonal subspaces we use is that they satisfy $\dim U+\dim V\leq d$. Furthermore, for vectors $v_1, \dots, v_k\in \bF^d$, we denote the \textit{affine span} of $v_1, \dots, v_k$ by $\aff\{v_1, \dots, v_k\}$. More formally, we have $\aff\{v_1, \dots, v_k\}=\{\sum_{i=1}^d \lambda_i v_i | \lambda_i\in \bF_q, \lambda_1+\dots+\lambda_d=1\}$.

Finally, we define a specific bilinear form $\langle u, v\rangle_d$ and the associated norm $\|v\|_d$ on the space $\bF_q^d$ as follows. When $d\not\equiv 1\bmod 4$, we let $\langle u, v\rangle_d$ be the standard inner product defined by $\langle u, v\rangle_d=u_1v_1+\dots+u_dv_d$, while for $d\equiv 1\bmod 4$ we define $\langle u, v\rangle_d=u_1v_1+\dots+u_{d-1}v_{d-1}-u_dv_d$. In both cases, we define the norm $\|v\|_d^2=\langle v, v \rangle_d$.

\begin{lemma}\label{lemma:intersections of spheres}
Let $\langle \cdot, \cdot\rangle$ be a nondegenerate bilinear form on $\bF^d$ and $S_1, \dots, S_k$ be unit spheres in $\bF^d$, defined with respect to $\langle\cdot, \cdot\rangle$. Then there exists an affine subspace $U\subseteq \bF^d$ such that $S_1\cap \dots\cap S_k=S_1\cap U$. Moreover, $U$ is orthogonal to the affine flat spanned by the centers of $S_1, \dots, S_k$.
\end{lemma}
\begin{proof}
We prove the statement by induction on $k$. Let us denote the centers of the spheres $S_1, \dots, S_k$ by $w_1, \dots, w_k$.

When $k=1$, we may take $U=\bF^d$. For $k>1$, we assume by our induction hypothesis that $S_1\cap \dots\cap S_{k-1}=S_1\cap U$ for some affine subspace $U\subseteq \bF^d$, which is orthogonal to $\aff\{w_1, \dots, w_{k-1}\}$. Hence, we have $S_1\cap \dots\cap S_{k-1}\cap S_k=S_1\cap S_k\cap U$. Consider the defining equations for $S_1$ and $S_k$, which are \[\langle x-w_1, x-w_1\rangle=1 \text{ and } \langle x-w_k, x-w_k\rangle=1.\] By subtracting these two equations, we have $2\langle x, w_k-w_1\rangle+\langle w_1, w_1\rangle-\langle w_k, w_k\rangle=0$ for any $x\in S_1\cap S_k$. Note that this equation is linear in $x$ and hence it defines a hyperplane $H$, whose normal vector is $w_1-w_k$. We conclude that $S_1\cap S_k=S_1\cap H$ and therefore $S_1\cap \dots \cap S_k=S_1\cap S_k\cap U=S_1\cap (U\cap H)$, where $U\cap H$ is an affine space. 

Let us now argue that $U\cap H$ is orthogonal to $\aff\{w_1, \dots, w_k\}$. Let $u, u'\in U\cap H$ and $v, v'\in \aff\{w_1, \dots, w_k\}$ be arbitrary vectors. One may write $v-v'=\sum_{i=2}^k \lambda_i (w_i-w_1)$ with suitable $\lambda_2,\dots,\lambda_k\in \bF$. Therefore, we have 
\begin{align*}
\langle u-u', v-v'\rangle =\Big\langle u-u', \sum_{i=2}^{k-1} \lambda_i (w_i-w_1)\Big\rangle+\left\langle u-u', \lambda_k (w_k-w_1)\right\rangle.
\end{align*}
Note that the first term is zero since $u, u'\in U$, while the second term is zero because $u, u'\in H$. Hence, $U\cap H$ and $\aff\{w_1 \dots, w_k\}$ are orthogonal. 
\end{proof}

One counter-intuitive feature of spheres over finite fields is that they can contain flats. In the next lemma, we study some properties of such flats. 

\begin{lemma}\label{lemma:isotropic flats}
Let $\bF$ be a field of characteristic different from 2, let $V$ be a flat, and let $S$ be a unit sphere in $\bF^d$ centered at $w$. If $V\subseteq S$, then for any $x, y\in V$ we have $\langle x-y, x-y\rangle =0$ and $\langle x-w, x-y\rangle =0$.
\end{lemma}
\begin{proof}
If $x, y\in V$, then $x+\lambda (y-x)\in V$ for every $\lambda\in \bF$. Since $S=\{v\in \bF^d:\langle v-w, v-w\rangle=1\}$ and $V\subseteq S$, we must have 
\[1=\langle x+\lambda (y-x)-w, x+\lambda (y-x)-w\rangle=\langle x-w, x-w\rangle + 2\lambda \langle x-w, y-x\rangle +\lambda^2 \langle y-x, y-x\rangle.\]
Since $\langle x-w, x-w\rangle =1$, we obtain $\lambda^2 \langle y-x, y-x\rangle+2\lambda \langle x-w, y-x\rangle =0$ for all $\lambda$, implying that $\langle y-x, y-x\rangle=0$ and $\langle x-w, y-x\rangle=0$, just as claimed. 
\end{proof}

A flat $V$ for which $\langle x-y, x-y\rangle =0$ for all $x, y\in V$ is called \textit{totally isotropic}.

\begin{lemma}\label{lemma:auxiliary odd d}
Let $\bF$ be a field containing no roots of the equation $x^2=-1$ and let $d=2k+1$. Furthermore, consider the bilinear form $\langle\cdot, \cdot\rangle_d$ on $\bF^d$. Then $\bF^d$ does not contain a pair $(V,w)$, where $V$ is a totally isotropic flat of dimension $k$, and $w$ is a vector of norm $1$ orthogonal to $V$.
\end{lemma}
\begin{proof}
Assume that $\bF^d$ contains such a pair $(V,w)$. After translation, we may assume that $V$ contains the origin. Let $v_1, \dots, v_k$ be a basis of the subspace $V$ which satisfies $\langle v_i, e_j\rangle_d =\delta_{ij}$ for all $i, j\in [k]$, where $e_1, \dots, e_d$ are the standard basis vectors. Here, $\delta_{ij}$ is defined to be $1$ when $i=j$ and $0$ otherwise. After possibly rearranging the coordinates, Gaussian elimination shows that it always possible to find such a basis. Furthermore, since $V$ is totally isotropic, any vector $w'=w+v$ with $v\in V$ is also of unit norm and orthogonal to $V$. Therefore, we may subtract an appropriate linear combination of $v_1, \dots, v_k$ from $w$ and assume that $\langle w, w\rangle_d =1$ and $\langle w, e_j\rangle_d=0$ for all $j\in [k]$.

To derive a contradiction, we compute the Gram matrix $G$ of the vectors $v_1'=v_1-e_1, \dots, v_k'=v_k-e_k, w$. Note that the first $k$ coordinates of each of these $k+1$ vectors are equal to $0$ and therefore one might consider $v_1', \dots, v_k', w$ as vectors in $\bF^{k+1}$ without changing their Gram matrix. Hence, we have $G(i,j)=\langle v_i', v_j'\rangle_d =\langle v_i, v_j\rangle_d -\langle e_i, v_j\rangle_d -\langle v_i, e_j\rangle_d +\langle e_i, e_j\rangle_d=-\delta_{ij}$ for all $i,j\in [k]$. Similarly, one can compute $G(i,k+1)=G(k+1,i)=\langle v_i', w\rangle_d=\langle v_i, w\rangle_d -\langle e_i, w\rangle_d =0$ and $G(k+1,k+1)=\langle w, w\rangle_d=1$. In conclusion, $G=\diag[-1,\dots,-1,1]$. 

On the other hand, let $A$ be a $(k+1)\times (k+1)$ the matrix whose columns are the vectors $v_1', \dots, v_k', w$ (recall that we may consider these vectors as elements of $\bF^{k+1}$). Recalling the definition of the the bilinear form $\langle v_i', v_j'\rangle_d$, we note that we can rewrite it as $\langle v_i', v_j'\rangle_d=(v_i')^T M v_j'$, where $M=I_{k+1}$ if $d\equiv 3\bmod 4$ and $M=\diag[1, \dots, 1, -1]$ if $d\equiv 1\bmod 4$. Hence, one can express the Gram matrix as $G=A^T M A$ and so $A^TMA=\diag[-1, \dots, -1, 1]$. By taking determinants, we see that $\det(A)^2\det(M)=\det(\diag[-1, \dots, -1, 1])=(-1)^k$. 

The final observation is that $\det(M)=-1$ when $d\equiv 1\bmod 4$ and $\det(M)=1$ otherwise, meaning that $\det(M)=(-1)^{k-1}$. Hence, no matter the parity of $k$, we have $\det(A)^2=-1$. But this is impossible, since the equation $x^2=-1$ has no solution in $\bF$. 
\end{proof}

\subsection{Proofs of the upper bounds}

In this section, we prove Theorem \ref{thm:unit_distance}. In particular, we prove the following incidence bound between points and unit spheres defined with respect to any nondegenerate bilinear form, from which the theorem immediately follows.

\begin{proposition}\label{prop:point-sphere incidences}
Let $\cP$ be a set of $n$ points  and let $\cS$ be a collection of $n$ unit spheres in $\bF^d$. If the incidence graph $G(\cP, \cS)$ is $K_{s, s}$-free, then the number of incidences between $\cP$ and $\cS$ is at most $O_{d,s}(n^{2-\frac{1}{\lceil d/2\rceil +1}})$.
\end{proposition}

The proof follows the ideas from Section~\ref{sec:illustration}, where we showed that incidence graphs of points and hyperplanes in $\bF^d$ avoid the pattern $\Pi_d$. Here, we will show that the point-sphere incidence graphs in $\bF^d$ avoid the pattern $\Pi_{d+1}$ and use this to deduce Proposition~\ref{prop:point-sphere incidences}.

\begin{lemma}\label{lemma:forbidden pattern}
Let $\cP$ be a set of $n$ points and let $\cS$ be a set of $n$ spheres in $\bF^d$. The incidence graph $G(\cP, \cS)$ does not contain the pattern $\Pi_{d+1}$ defined above.
\end{lemma}
\begin{proof}
For contradiction, let us assume that $G(\cP, \cS)$ does contain $\Pi_{d+1}$, whose vertices $a_1, \dots, a_{d+1}$ correspond to points $P_1, \dots, P_{d+1}$ and $b_1, \dots, b_{d+1}$ correspond to unit spheres $S_1, \dots, S_{d+1}$.

We  show by induction that $S_1\cap \dots \cap S_k=S_1\cap U$ for a affine flat $U$ of dimension at most $d-k+1$, for all $k\leq d+1$. For $k=1$ this statement is trivial. For $k\geq 2$, we may assume that $S_1\cap \dots\cap S_{k-1}=S_1\cap U$ for some flat $U$ of dimension at most $d-k+2$. By Lemma~\ref{lemma:intersections of spheres}, there exists a flat $H$ for which $S_1\cap S_k=S_1 \cap H$. Let $V=U \cap H$. Then
$S_1\cap \dots\cap S_{k}=S_1\cap U \cap S_k=S_1\cap (U \cap H)=S_1 \cap V$. The point $P_{k-2}$ is contained in the spheres $S_1, \dots, S_{k-1}$, but not in $S_{k}$. Therefore, $P_{k-2}$ is a point of $U$ not contained in $V$, implying that $\dim(V)\leq \dim(U)-1\leq d-k+1$.

Applying this claim with $k=d+1$, we conclude that there exists a $0$-dimensional affine space $U$ for which $S_1\cap \dots \cap S_{d+1}=S_1\cap U$. In other words, $S_1\cap \dots \cap S_{d+1}$ is a single point. Then, it is not possible for both $P_{d}$ and $P_{d+1}$ to be contained in $S_1\cap \dots \cap S_{d+1}$, presenting a contradiction.
\end{proof}

\begin{proposition}
Let $G$ be a bipartite graph on $2n$ vertices which does not contain $K_{s, s}$ or the pattern $\Pi_{d+1}$. Then, $G$ has at most $O_{d,s}\Big(n^{2-\frac{1}{\lceil d/2\rceil +1}}\Big)$ edges.
\end{proposition}
\begin{proof}
Lemma~\ref{lemma:main upper bounds} it is proved that a balanced bipartite graph on $2n$ vertices containing no $K_{s, s}$ or $\Pi_{d+1}$ has at most $O_{d,s}\Big(n^{2-\frac{1}{\lceil (d+2)/2\rceil }}\Big)$.
\end{proof}

\subsection{Constructions}

In this section, our goal is to prove Theorem \ref{thm:sphere_construction}. More precisely, for every $n$ and $d$, we construct a set $\cP$ of $n$ points in $\bF_q^d$, where $q=p^r$ is some prime power, with the properties that $\cP$ spans $\Omega(n^{2-\frac{1}{\lceil d/2\rceil+1}})$ unit distances and that the unit distance graph does not contain $K_{s, s}$. 

Our construction is based on so called variety-evasive sets. For fixed parameters $1\leq k\leq d$ and $\Delta, s>0$, we say that a set $U$ is $(k, s)$-variety-evasive if every variety $V\subseteq \bF_q^d$ of dimension $k$ and degree at most $\Delta$ intersects $U$ in less than $s$ points. Here we only consider varieties of degree at most $2$ and therefore we fix $\Delta=2$. Dvir, Koll\'ar and Lovett constructed large variety-evasive sets and in particular the following theorem is a special case of Corollary 6.1 from \cite{DKL}.

\begin{theorem}\label{thm:variety evasive sets}
For every positive integer $d$ and prime power $q$, there exist a constant $s=s(d)$ and set $U$ of $q^{d-k}$ points in $\bF_q^d$ such that any $k$-dimensional variety of degree at most $2$ intersects $U$ in less than $s$ points.
\end{theorem}

Let us now explain the idea behind our constructions. We set $k=\lfloor d/2\rfloor$ and choose $p$ to be a prime with $p\equiv 3\bmod 4$ and $p\approx n^{\frac{1}{\lceil d/2\rceil + 1}}$. Then, we use Theorem~\ref{thm:variety evasive sets} to find a $(k-1, s)$-variety evasive set $U\subseteq \bF_p^d$ of size $p^{d-k+1}=p^{\lceil d/2\rceil + 1}$. Since $U$ is $(k-1, s)$-variety evasive, we are able to show that the unit distance graph does not contain $K_{s, s}$ as a subgraph. However, in order to ensure many unit distances, we need to take the union of $U$ with a random shift, i.e. we set $\cP_x=U\cup (U+x)$ where $x\in \bF_p^d$ is chosen uniformly at random. 

\begin{proposition}\label{prop:many unit distances}
There exists $x\in \bF_p^d$ for which $\cP_x$ spans $\Omega\Big(|\cP_x|^{2-\frac{1}{\lceil d/2\rceil+1}}\Big)$ unit distances with respect to the norm $\|\cdot \|_d$.
\end{proposition}
\begin{proof}
Choose $x$ form the uniform distribution on $\bF_p^d$. The number of unit distances spanned by $\cP_x$ is at least the number of pairs $u, v\in U$ such that $\|u-(v+x)\|_d=1$. For a fixed pair $u, v\in U$, the equation $\|u-v-x\|_d=1$ is quadratic in $x$ and therefore it has $(1+o(1))p^{d-1}$ solutions (see e.g. Theorems 6.26 and 6.27 in \cite{LN}). Hence, for fixed $u, v\in U$ we have $\Pb[\|u-(v+x)\|_d=1]=(1+o(1))p^{-1}$ and so the expected number of pairs $u, v\in U$ with $\|u-(v+x)\|_d=1$ is $(1+o(1))|U|^2/p$. Therefore, $x$ can be chosen such that the number of unit distances spanned by $\cP_x$ is $\Omega(|U|^2/p)=\Omega\Big(|\cP_x|^{2-\frac{1}{\lceil d/2\rceil+1}}\Big)$.
\end{proof}

Let us fix the value $x$ from Proposition~\ref{prop:many unit distances} from now on and write $\cP=\cP_x$. Note that the set $\cP$ is $(k-1, 2s)$-variety evasive, since no variety of dimension $k-1$ and degree at most $2$ can contain more than $s$ points of either $U$ or $U+x$. 

\begin{proposition}
The unit distance graph of $\cP$ with respect to the norm $\|\cdot \|_d$ does not contain $K_{4s, 4s}$ as a subgraph.
\end{proposition}
\begin{proof} 
We argue by contradiction, assuming that there exist points $v_1, \dots, v_{4s},w_1, \dots, w_{4s}\in \cP$ such that $\|v_i-w_j\|_d=1$ for all $i, j\in [4s]$. Let $V=\aff \{v_1, \dots, v_{4s}\}$ and $W=\aff \{w_{1}, \dots, w_{4s}\}$. 

We claim that the flats $V$ and $W$ are orthogonal, with respect to the bilinear form $\langle \cdot, \cdot\rangle_d$. If we denote by $S_1, \dots, S_{4s}$ the unit spheres centered at $w_{1}, \dots, w_{4s}$, the assumption implies that $v_{1}, \dots, v_{4s}\in \bigcap_{i=1}^{4s} S_i$. By Lemma~\ref{lemma:intersections of spheres}, there exists an affine flat $V'$, orthogonal to $W$, such that $\bigcap_{j=1}^{4s} S_j=S_1\cap V'$. Since $V\subseteq V'$, we conclude $V$ and $W$ are orthogonal, which implies that $\dim V+\dim W\leq d$. 

Since $\cP$ is $(k-1, 2s)$-variety evasive, no $(k-1)$-dimensional flat contains $2s$ points of $\cP$ (note that an affine flat is a variety of degree $1$). In particular, this means that $\dim V\geq k$ and $\dim  W\geq k$. Combining this with $\dim V+\dim W\leq d=2k+1$, we conclude that $\dim V = k$ or $\dim W = k$. By symmetry, we may assume that $\dim V=k$. 

We claim that $V\subseteq S_j$, for every $j\in [4s]$. Suppose this was not the case, i.e. that there was some $j$ for which $V\not\subseteq S_j$. Since $V$ is a variety of dimension $k$ and degree $1$ and $S_j$ is a variety of dimension $d-1$ and degree $2$, Theorem~\ref{thm:preliminary} implies that if $V\cap S_j$ is a union of irreducible components $Z_1, \dots, Z_t$, then $\sum_{i=1}^t \deg Z_i\leq 2$. Hence, $V\cap S_j$ consists either of one $(k-1)$-dimensional component of degree $2$ or of at most two $(k-1)$-dimensional components of degree $1$. However, recall that $v_1, \dots, v_{4s}\in S_j\cap V$ and that every $(k-1)$-dimensional variety of degree at most $2$ contains less than $2s$ points of $\cP$. Hence, in both cases we have a contradiction and so we must have $V\subseteq S_j$. Therefore, by Lemma~\ref{lemma:isotropic flats}, $V$ must be a totally isotropic flat and we have $\langle w_j-y, z-y\rangle_d=0$ for all $y, z\in V$. 

Since $\langle w_j-v_i, w_j-v_i\rangle_d =1$, the point $w_j$ does not lie in the flat $V$. Also, $\aff (\{w_j\}\cup V)$ is orthogonal to $V$, since $\langle w_{j}-x, y-x\rangle_d=0$ for all $x, y\in V$. Hence, $\dim \aff (\{w_{j}\}\cup V)=\dim V+1=k+1$. If $d$ is even, this is a contradiction since $\dim V+\dim \aff (\{w_{j}\}\cup V)=2k+1>d$.

If $d$ is odd, we found a totally isotropic $k$-flat with a vector $w_{j}-v_{i}$ which has norm $1$ and is orthogonal to this $k$-flat. But this is impossible by Lemma~\ref{lemma:auxiliary odd d}.
\end{proof}

Let us now put all the ingredients together and show the general construction.

\begin{proof}[Proof of Theorem \ref{thm:sphere_construction}]
Let $p$ be the smallest prime such that $p\equiv3 \bmod 4$ and $p>n^{\frac{1}{\lceil d/2\rceil +1}}$. By a theorem of Breusch \cite{B}, we have $p\leq 2n^{\frac{1}{\lceil d/2\rceil +1}}$, as long as $n^{\frac{1}{\lceil d/2\rceil +1}}\geq 7$. Let $\cP$ be the set of at least $p^{\lceil d/2\rceil +1} \geq n$ points constructed in this section. For $d\not\equiv 1\bmod 4$, taking a random $n$ element subset of $\cP$ gives the desired set. However, for $d\equiv 1 \bmod 4$ the standard norm and the norm $\|\cdot\|_d$ are different as we need one final step to complete the construction.

If $d\equiv 1\bmod 4$, we let $q=p^2$, we choose $\alpha\in \bF_q$ to be a solution of $\alpha^2=-1$, and consider the map $\varphi:\bF_p^d\to \bF_q^d$ given by $\varphi((x_1, \dots, x_d))=(x_1, \dots, x_{d-1}, \alpha x_d)$. Note that $\varphi(u)$ and $\varphi(v)$ are at unit distance measured with respect to the standard inner product if and only if $\|u-v\|_d=1$. Hence, we conclude that the set $\{\varphi(u)|u\in \cP\}\subseteq \bF_q^d$ spans $\Omega\Big(|\cP|^{2-\frac{1}{\lceil d/2\rceil+1}}\Big)$ unit distances and has no $K_{s, s}$ in the unit distance graph, where unit distances are measured with respect to the standard inner product. Now taking a random $n$ element subset of $\cP$ completes the proof.
\end{proof}

\section{The algebraic Zarankiewicz problem}\label{sec:algebraic zarankiewicz}

In this section, we present the proof of our bound on Zarankiewicz's problem in algebraic graphs of bounded complexity. We prove the upper and lower bounds separately, and then we use the lower bound to show Theorem~\ref{thm:construction varieties}. In the proof of the upper bound, we combine Theorem~\ref{thm:zarankiewicz bounded VC} with the estimate~(\ref{eqn:zero-patterns polynomials}). 

For the lower bounds, we use the random polynomial method, which was pioneered by Bukh \cite{Bukh}. The main idea is to choose the polynomial defining the algebraic graph to be a random low-degree polynomial, chosen uniformly among all polynomials of degree at most $\Delta$ in $\bF[x_1, \dots, x_D]$, where $\bF$ is a finite field of appropriate size. The key observation is that for any fixed $x_1, \dots, x_s\in \bF^D$ and random polynomial $f$ of degree at most $\Delta$, the random variables $\{f(x_j)| j\in [s]\}$ are independent and uniform in $\bF$ as long as $s\leq \min\{\Delta, |\bF|^{1/2}\}$ (see Claims 3.3 and 3.4 in \cite{ST23}).

\begin{theorem}\label{thm:algebraic zarankiewicz upper}
Let $G$ be an algebraic graph of description complexity at most $t$ on the vertex set $\cP\cup \cQ$, where $\cP\subseteq \bF^{d_1}$, $\cQ\subseteq \bF^{d_2}$ and $|\cP|=m$, $|\cQ|=n$. If $G$ is $K_{s, s}$-free, the number of edges in $G$ is at most $$O_{d_1,d_2,t, s}(\min\{m^{1-1/d_1}n, mn^{1-1/d_2}\}).$$
\end{theorem}
\begin{proof}
Let the graph $G$ be defined by the polynomials $f_1, \dots, f_t:\bF^{d_1}\times \bF^{d_2}\to \bF$ and a boolean formula $\Phi$, where $\deg f_i\leq t$ for $i=1, \dots, t$.

Since the roles of $m$ and $n$ are symmetric, it suffices to show that any $K_{s, s}$-free algebraic graph $G$ in $(\bF^{d_1}, \bF^{d_2})$ has at most $O_{t, s}(mn^{1-1/d_2}+n)$ edges. To do this, we use Theorem~\ref{thm:zarankiewicz bounded VC}. Define the set system $\cF=\{N(v):v\in \cQ\}$ on the ground set $\cP$. Our main goal is to show $\pi_{\cF}(k)\leq O_{t, d_2}(k^{d_2})$ for all $k$, since in this case Theorem~\ref{thm:zarankiewicz bounded VC} shows that $e(G)\leq O_{t, s}(mn^{1-1/d_2}+n)$. To this end, we fix  $k$ points $x_1, \dots, x_k\in \cP$ and bound the number of possible intersections $\{x_1, \dots, x_k\}\cap N(v)$ for $v\in \cQ$.  For every $i\in [k]$ and $j\in [t]$, consider the polynomial $y\mapsto f_j(x_i, y)$ over $\bF^{d_2}$ Let $F$ be the set of these $kt$ polynomials.

The number of distinct intersections $\{x_1, \dots, x_k\}\cap N(v)$ is bounded by the number of zero-patterns of polynomials in $F$, since the zero-pattern of polynomials in $F$ at a point $v\in \bF$ can be used to determine which $x_i$ is adjacent to $v$. Since $F$ is a set of $kt$ polynomials of degree at most $t$, by (\ref{eqn:zero-patterns polynomials}) we have that the number of zero-patterns of the polynomials in $F$ is bounded by $\binom{kt^2}{d_2}$. Therefore, $\pi_{\cF}(k)\leq \binom{kt^2}{d_2}= O_{t, d_2}(k^{d_2})$, finishing the proof.
\end{proof}

To show the lower bounds, we will need the following simple lemma.

\begin{lemma}\label{lemma:zeros of random polynomials}
Let $\Delta, D\geq 3$ be fixed integers and let $p\geq 4$ be a prime. Let $f\in \bF_p[x_1, \dots, x_D]$ be chosen uniformly at random among polynomials of degree at most $\Delta$, then $f$ has at least $p^{D-1}/2$ zeros in $\bF_p^D$ with probability at least $\frac{3}{4}$.
\end{lemma}
\begin{proof}
Since $f(x)$ is a uniformly distributed in $\bF_p$ for any fixed $x\in \bF_p^D$, the expected number of zeros of $f$ is $\bE\big[|V(f)|\big]=p^{D-1}$. On the other hand, using the second moment method, one can also show that the random variable $|V(f)|$ is concentrated around its mean. More precisely, if we denote by $\mathbf{1}_x$ the indicator random variable of $f(x)=0$, we have 
\[\Var\big(|V(f)|\big)= \sum_{x, y\in \bF^D} \Cov\big(\mathbf{1}_x, \mathbf{1}_y\big).\]
Claim 3.4 of \cite{ST23} states that $f(x)$ and $f(y)$ are independent random variables when $x\neq y$, if $p\geq 4$ and $\Delta\geq 2$. Moreover, for $x=y$, we have $\Cov\big(\mathbf{1}_x, \mathbf{1}_x\big)\leq \Pb[f(x)=0]=p^{-1}$. Hence, $\Var\big( |V(f)|\big)\leq p^{D-1}$. By Chebyshev's inequality, we have $\Pb[ |V(f)|<p^{D-1}-\lambda \sqrt{p^{D-1}}]\leq \frac{1}{\lambda^2}$, which for $\lambda=\frac{1}{2}\sqrt{p^{D-1}}$ gives 
\[\Pb\left[|V(f)|<\frac{p^{D-1}}{2}\right]\leq \frac{4}{p^{D-1}}\leq \frac{1}{4}.\]
\end{proof}

\begin{proposition}\label{prop:algebraic zarankiewicz lower}
For any $d_1, d_2$ and sufficiently large integers $m, n$, there exists a field $\bF$, sets of points $\cP\subseteq \bF^{d_1}, \cQ\subseteq \bF^{d_2}$, $|\cP|=m, |\cQ|=n$ and a polynomial $f:\bF^{d_1}\times \bF^{d_2}\to \bF$ of degree at most $\Delta=(d_1+d_2)^2$ with the following property. The algebraic graph $G$ defined on $\cP\cup \cQ$ by the equation $f(x, y)=0$ is $K_{s, s}$-free, where $s=d_1+d_2$, and has $\Omega(\min\{m^{1-1/d_1}n, mn^{1-1/d_2}\})$ edges.
\end{proposition}
Observe that the graph $G$ has description complexity at most $(d_1+d_2)^2$ and therefore Proposition~\ref{prop:algebraic zarankiewicz lower} suffices to show the second part of Theorem~\ref{thm:algebraic zarankiewicz}.

\begin{proof}
By symmetry, we may assume that $mn^{1-1/d_2}<m^{1-1/d_1}n$, which is equivalent to $m\leq n^{d_1/d_2}$. Let $p$ be the smallest prime larger than $n^{1/d_2}$, which satisfies $p<2n^{1/d_2}$ by Bertrand's postulate.

We define an algebraic graph $G_0$ in $(\bF_p^{d_1}, \bF_p^{d_2})$  as follows. Let $\cP_0=\bF_p^{d_1}$ and $\cQ_0=\bF_p^{d_2}$. Furthermore, let $f:\bF_p^{d_1}\times \bF_p^{d_2}\to \bF_p$ be a polynomial chosen randomly from the uniform distribution on all polynomials  of degree at most $\Delta=(d_1+d_2)^2$. Finally, set $x\in \cP_0$ and $y\in \cQ_0$ to be adjacent in $G_0$ if and only if $f(x, y)=0$.

We begin by showing that the probability $G_0$ contains $K_{s, s}$ is at most $\frac{1}{4}$. Given $2s$ points $x^{(1)}, \dots, x^{(s)}\in \cP_0, y^{(1)}, \dots, y^{(s)}\in \cQ_0$, the $s^2$ points $(x^{(i)}, y^{(j)})$ are independent uniform random variables as long as $s^2\leq \min\{\Delta, p^{1/2}\}$, see \cite{Bukh}. Therefore, the probability  that $f(x^{(i)}, y^{(j)})=0$ for all $i, j\in [s]$ is exactly $p^{-s^2}$.  Hence, we can apply the union bound over all subsets of $\cP_0$ and $\cQ_0$ of size $s$ to deduce
\[\Pb[K_{s, s}\subseteq G]\leq \binom{p^{d_1}}{s}\binom{p^{d_2}}{s} p^{-s^2}\leq \frac{p^{d_1s}}{s!}\frac{p^{d_2s}}{s!}p^{-s^2}\leq \frac{1}{(s!)^2}\leq \frac{1}{4}.\]

On the other hand, Lemma~\ref{lemma:zeros of random polynomials} shows that with probability at least $3/4$, the polynomial $f$ has at least ${p^{d_1+d_2-1}}/{2}$ zeros, which means that $G_0$ has at least  ${p^{d_1+d_2-1}}/{2}$ edges with probability at least $3/4$. Therefore, there exists a polynomial $f$ such that the graph $G_0$ is $K_{s, s}$-free and has at least $\frac{1}{2p}|\cP_0||\cQ_0|$ edges. Let us fix this graph $G_0$.

Then we get our final graph $G$ by sampling an $m$ element subset $\cP$ of $\cP_0$, and an $n$ element subset $\cQ$ of $\cQ_0$. Then, the expected number of edges of $G$ is $\frac{|\cP|}{|\cP_0|}\frac{|\cQ|}{|\cQ_0|}e(G_0)\geq \frac{mn}{2p}$. Hence, there exists a choice of $\cP, \cQ$ for which $e(G)\geq \frac{mn}{2p}$, let us fix this choice. This graph $G$ is clearly algebraic and $K_{s, s}$-free, since it is an induced subgraph of a $K_{s, s}$-free algebraic graph. Furthermore,  $e(G)\geq \frac{mn}{2p}\geq \frac{mn}{2} (2n)^{-1/d_2}=\Omega(mn^{1-1/d_2})$. This completes the proof. 
\end{proof}

We conclude the section by proving Theorem~\ref{thm:construction varieties}, which demonstrates that our upper bounds on point-variety incidences are tight.

\begin{proof}[Proof of Theorem \ref{thm:construction varieties}]
Consider the case $d=D-1$. The general case follows by embedding $\bF_p^{d+1}$ into a higher dimensional space. Let $p$ be a prime between $m^{1/D}$ and $2m^{1/D}$, which exists by Bertrand's postulate. Further, let $D'=\lceil \alpha D\rceil$, and let $G$ be an algebraic graph in $(\bF_p^D, \bF_p^{D'})$ with parts $\cP, \cQ$ of size $m, n$ obtained through Proposition~\ref{prop:algebraic zarankiewicz lower}. 

We construct a set of hypersurfaces $\cV$ from the set of points $\cQ$. For each point $q\in \cQ$, we consider the polynomial $f_q:\bF_p^{D}\to \bF_p$ given by $f_q(x)=f(x, q)$. Since $f_q(x)$ may not be irreducible, let $g_q(x)$ be an irreducible factor of $f_q(x)$ with the largest number of roots in $\cP$. Finally, define the algebraic set $V_q=\{x\in \bF_p^D|g_q(x)=0\}$. We show that the point set $\cP$ and the collection $\cV=\{V_q|q\in \cQ\}$ satisfy all conditions of the theorem.

Since $g_q(x)$ is an irreducible polynomial, the ideal $\langle g_q(x)\rangle \subseteq \bF_p[x_1, \dots, x_D]$ is a prime ideal, and therefore by Proposition 3, page 207 of \cite{CLO}, the algebraic set defined by $g_q(x)$ is irreducible and therefore a variety. Furthermore, the dimension of the variety $V_q=\{x\in \bF_p^D| g_q(x)=0\}$ is $D-1=d$. Hence, $\cV$ is indeed a collection of varieties of required dimension. 

Let us now argue that there are no distinct varieties $V_{q_1}, \dots, V_{q_s}\in \cV$ with at least $s$ points of $\cP$ in common, where $s=(D+D')^2$. If such $s$ varieties exist, the common neighbourhood of points $q_1, \dots, q_s$ in the graph $G$ contains at least $s$ points, which is not possible since $G$ is $K_{s, s}$-free. Hence, the incidence graph $G(\cP, \cV)$ is $K_{s, s}$-free.

Finally, we argue that there are many incidences between $\cP$ and $\cV$, i.e. that $I(\cP, \cV)\geq \Omega_{D, \alpha}(m^{\frac{D-1}{D}}n)$. The main observation is that the polynomials $f_q(x)$ have at most $(D+D')^2$ irreducible factors, since $\deg f_q(x)\leq (D+D')^2$. Hence, the number of roots of $g_q(x)$ in $\cP$ is at least $\frac{1}{(D+D')^2}$ times the number of roots of $f_q(x)$ in $\cP$. But the number of roots of $f_q(x)$ in $\cP$ summed over all $q\in \cQ$ is the number of edges of $G$ and therefore we have:
\begin{align*}
    I(\cP, \cV)=\sum_{q\in \cQ} |V_q\cap \cP|\geq \sum_{q\in \cQ} \frac{|\{f_q(x)=0\}\cap \cP|}{(D+D')^2}\geq \frac{e(G)}{(D+\lceil \alpha D\rceil )^2}\geq \Omega_{D, \alpha}\Big(\min\{m^{\frac{D-1}{D}} n, mn^{\frac{D'-1}{D'}}\}\Big).
\end{align*}
It is not hard to see that $m^{-1/D}\leq n^{-1/D'}$ since $m^{D'}=m^{\lceil \alpha D\rceil }\geq m^{\alpha D}=n^D$. Therefore, $I(\cP, \cV)\geq \Omega_{D, \alpha}\Big(m^{\frac{D-1}{D}} n\Big)$, which suffices to complete the proof.
\end{proof}

\section{Concluding remarks}

In this paper, we proposed a novel combinatorial approach to study incidence problems over arbitrary fields. We considered several natural problems, such as determining the maximum number of point-hyperplane incidences, point-variety incidences, and unit distances, under standard non-degeneracy assumptions.

Our bounds are also optimal in the sense that they cannot be improved unless assumptions on the underlying field are made. Hence, it would be interesting to decide whether the techniques in this paper can be combined with some field-dependent information to improve some of our bounds. Such improved bounds are known over the real numbers \cite{AS07,FPSSZ}, and also in some cases over finite fields \cite{BKT,Rudnev}. For example, Rudnev's theorem \cite{Rudnev} shows that when $n\leq p^2$,
there are at most $O(m\sqrt{n}+ms)$ incidences between $m$ points and $n$ planes in $\bF_p^3$, assuming the incidence graph is $K_{2, s}$-free.

Finally, it would be also interesting to find other geometric problems for which the methods of this paper can be applied. For example, it may be possible to use our approach to address other non-degeneracy conditions beyond forbidding $K_{s, s}$ in the incidence graph. 

\vspace{0.4cm}

\noindent
\textbf{Acknowledgements.} We would like to thank Larry Guth for suggesting to study point-variety incidences and stimulating discussions. We would also like to thank Joshua Zahl and Misha Rudnev for pointing out additional references.

\end{document}